\theoremstyle{definition}
\newtheorem{theorem}{Theorem}[section]
\newtheorem{corollary}[theorem]{Corollary}
\newtheorem{construction}[theorem]{Construction}
\newtheorem{convention}[theorem]{Convention}
\newtheorem{example}[theorem]{Example}
\newtheorem{fact}[theorem]{Fact}
\newtheorem{nconj}[theorem]{Naive conjecture}
\newtheorem{lemma}[theorem]{Lemma}
\newtheorem{notation}[theorem]{Notation}
\newtheorem{proposition}[theorem]{Proposition}
\newtheorem{remark}[theorem]{Remark}
\newtheorem{question}[theorem]{Question}
\newtheorem{definition}[theorem]{Definition}
\theoremstyle{plain}
\newtheorem*{claim*}{Claim}
\renewcommand{\H}{\mathbb{H}}
\newcommand{\R}{\mathbb{R}}
\newcommand{\Z}{\mathbb{Z}}
\newcommand{\N}{\mathbb{N}}
\newcommand{\Q}{\mathbb{Q}}
\DeclareMathOperator{\id}{id}
\DeclareMathOperator{\Hom}{Hom}
\DeclareMathOperator{\fix}{fix}
\DeclareMathOperator{\PSL}{PSL}
\DeclareMathOperator{\CO}{CO}
\DeclareMathOperator{\LO}{LO}
\DeclareMathOperator{\Homeo}{Homeo}
\DeclareMathOperator{\Diff}{Diff}
\DeclareMathOperator{\stab}{stab}
\DeclareMathOperator{\ord}{ord}
\title{Group orderings, dynamics, and rigidity}
\author{Kathryn Mann and Crist\'obal Rivas\footnote{K. Mann was partially supported by NSF grant DMS-1606254.  
C. Rivas acknowledges the support of CONICYT via FONDECYT 1150691 and via PIA 79130017.}}
\date{}
\begin{document}
\maketitle

\begin{abstract}
Let $G$ be a countable group.  We show there is a topological relationship between the space $\CO(G)$ of circular orders on $G$ and the moduli space of actions of $G$ on the circle; and an analogous relationship for spaces of left orders and actions on the line.  In particular, we give a complete characterization of \emph{isolated} left and circular orders in terms of \emph{strong rigidity} of their induced actions of $G$ on $S^1$ and $\R$.    

As an application of our techniques, we give an explicit construction of infinitely many nonconjugate isolated points in the spaces $\CO(F_{2n})$ of circular orders on free groups, disproving a conjecture from \cite{BS}, and infinitely many nonconjugate isolated points in the space of left orders on the pure braid group $P_3$, answering a question of Navas.  We also give a detailed analysis of circular orders on free groups, characterizing isolated orders.  
 \end{abstract}
 
\section{Introduction}

Let $G$ be a group.  A \emph{left order} on $G$ is a total order invariant under left multiplication, i.e. such that $a<b$ implies $ga<gb$ for all $a,b,g \in G$.  It is well known that a countable group is left-orderable if and only if it embeds into the group of orientation-preserving homeomorphisms of $\R$, and each left order on a group defines a canonical embedding up to conjugacy, called the \emph{dynamical realization}.  
Similarly, a \emph{circular order} on a group $G$ is defined by a \emph{cyclic orientation cocycle} $c: G^3 \to \{\pm1, 0\}$ satisfying certain conditions (see \S \ref{sec basic facts}), and for countable groups this is equivalent to the group embedding into $\Homeo_+(S^1)$.  Analogous to the left order case, each circular order gives a canonical, up to conjugacy, dynamical realization $G \to \Homeo_+(S^1)$.  This correspondence is the starting point for a rich relationship between the algebraic constraints on $G$ imposed by orders, and the dynamical constraints of $G$--actions on $S^1$ or $\R$.   The correspondence has already proved fruitful in many contexts; one good example is the relationship between orderability of fundamental groups of 3-manifolds, and the existence of certain codimension one foliations or laminations as shown in \cite{CD}.  

For fixed $G$, we let $\LO(G)$ denote the set of all left orders on $G$, and $\CO(G)$ the set of circular orders. These spaces have a natural topology; that on $\CO(G)$ comes from its identification with a subset of the infinite product $\{\pm1, 0\}^{G \times G \times G}$, and $\LO(G)$ can be viewed as a further subset of this (see \S \ref{sec basic facts} for details).     While $\CO(G)$ and $\LO(G)$ have previously been studied with the aim of understanding the structures of orders on groups, our aim here is to relate the spaces $\LO(G)$ and $\CO(G)$ to the moduli spaces $\Hom(G, \Homeo_+(\R))$ and $\Hom(G, \Homeo_+(S^1))$ of actions of $G$ on the line or circle.     

In many cases these moduli spaces are very poorly understood.  An important case is when $G$ is the fundamental group of a surface of genus at least 2.  Here $\Hom(G, \Homeo_+(S^1))$ has a topological interpretation (as the space of flat circle bundles over the surface), yet it
remains an open question whether $\Hom(G, \Homeo_+(S^1))$ has finitely or infinitely many connected components.    
Our work here shows that, for any group $G$, the combinatorial object $\CO(G)$ is a viable tool for understanding the space of actions of $G$ on $S^1$.  

In other cases, actions of $G$ on the circle or line are easier to describe than circular or left orders, and thus the dynamics of actions can serve as a means for understanding the topology of $\LO(G)$ and $\CO(G)$.   A good example to keep in mind is $G = F_2$, since 
$\Hom(F_2, \Homeo_+(S^1)) \cong \Homeo_+(S^1) \times \Homeo_+(S^1)$, but the topology of $\LO(F_2)$ and $\CO(F_2)$ are not so obvious.   

For any group $G$, the space $\LO(G)$ is compact, totally disconnected and, when $G$ is countable, also metrizable \cite{Sikora}.  The same result holds by the same argument for $\CO(G)$.  Consequently the most basic question is whether $\LO(G)$ or $\CO(G)$ has any isolated points -- if not, it is homeomorphic to a Cantor set. 
That $\LO(F_2)$ has no isolated points was proved by McCleary \cite{McCleary} (see also \cite{Navas orders}) and generalized recently to $\LO(G)$ where $G$ is a free product of groups in \cite{Rivas 12}.  That $\CO(F_2)$ has no isolated points either was conjectured in \cite{BS}.   Our techniques give a (perhaps surprising) easy disproof of this conjecture using the dynamics of actions of $F_2$ on $S^1$.

\subsection*{Statement of results.}

Given that $\CO(G)$ is totally disconnected and $\Hom(G, \Homeo_+(S^1))$ typically has large connected components, one might expect little correlation between the two spaces.  Our aim is to demonstrate that there is a strong, though somewhat subtle, relationship.  A first step, and key tool is continuity:

\begin{proposition}[Continuity of dynamical realization] \label{cont prop2}
Let $c$ be a circular order on a countable group $G$, and $\rho$ a dynamical realization of $c$.  For any neighborhood of $U$ of $\rho$ in $\Hom(G, \Homeo_+(S^1))$, there  exists a neighborhood $V$ of $c$ in $\CO(G)$ such that each order in $V$ has a dynamical realization in $U$.   
\end{proposition}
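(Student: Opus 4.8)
The plan is to work with a single canonical construction of the dynamical realization and to exploit the fact that it depends on only finitely much combinatorial data at a time. First I would reduce to this canonical realization: since any two dynamical realizations of $c$ are topologically conjugate, and conjugation by a fixed $h \in \Homeo_+(S^1)$ is a homeomorphism of $\Hom(G,\Homeo_+(S^1))$ carrying uniformly close representations to uniformly close ones (because $h$ and $h^{-1}$ are uniformly continuous on the compact $S^1$), it suffices to prove the statement for the canonical realization $\rho$ attached to a fixed enumeration $g_0=e, g_1, g_2, \ldots$ of $G$; the general case follows by conjugating the realizations of the nearby orders by the same $h$.

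Recall that one builds an order-embedding $\phi \colon G \to S^1$ inductively, placing $\phi(g_{n+1})$ into the gap among the already-placed cyclically ordered points $\phi(g_0), \ldots, \phi(g_n)$ that is prescribed by $c$, at a location fixed by a deterministic rule (say the midpoint of that gap). The group acts by $\rho(g)\phi(h)=\phi(gh)$, and $\rho(g)$ is extended across each complementary gap by a fixed interpolation. The essential observation is that the positions of the first $m+1$ points, and hence the restriction of $\phi$ to $\{g_0,\ldots,g_m\}$, are determined entirely by the cocycle values $c(g_a,g_b,g_d)$ with $a,b,d \le m$. Consequently, if a circular order $c'$ agrees with $c$ on all such triples, its canonical embedding $\phi'$ satisfies $\phi'(g_j)=\phi(g_j)$ for every $j \le m$.

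The next step is to translate a basic neighborhood $U$ of $\rho$ into finite data. In the compact-open topology $U$ contains a set of the form $\{\rho' : \sup_{x} d(\rho'(g_i)x,\rho(g_i)x) < \epsilon,\ i=1,\ldots,k\}$ for some finite $\{g_1,\ldots,g_k\}$ and $\epsilon>0$. Because each $\rho(g_i)$ is orientation preserving, uniform $\epsilon$-closeness can be certified on a fine net: if $P\subset S^1$ has mesh $<\delta$, if $\rho(g_i)(P)$ has mesh $<\epsilon$, and if $\rho'(g_i)$ agrees with $\rho(g_i)$ on $P$, then monotonicity forces $\rho(g_i)(x)$ and $\rho'(g_i)(x)$ to lie in the common arc $(\rho(g_i)p_j,\rho(g_i)p_{j+1})$ of length $<\epsilon$ for every $x$. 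I would take $P=\phi(F)$ for a finite $F\subset G$ with $\phi(F)$ $\delta$-dense; by uniform continuity of the finitely many $\rho(g_i)$ one first fixes $\delta$ so small that each $\rho(g_i)(\phi(F))=\phi(g_iF)$ has mesh $<\epsilon$. Now choose $m$ so large that $F\cup g_1F\cup\cdots\cup g_kF\subset\{g_0,\ldots,g_m\}$, and let $V$ consist of the orders agreeing with $c$ on all triples from $\{g_0,\ldots,g_m\}$. For $c'\in V$ we have $\phi'=\phi$ on $F\cup g_1F\cup\cdots\cup g_kF$, whence $\rho'(g_i)\phi(h)=\phi'(g_ih)=\phi(g_ih)=\rho(g_i)\phi(h)$ for all $h\in F$; thus $\rho'(g_i)$ and $\rho(g_i)$ agree on $P$, and the net criterion yields $\rho'\in U$.

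I expect the main obstacle to be this net step: one must guarantee that a fine net of $S^1$ sits inside the orbit $\phi(G)$ and that applying the finitely many $\rho(g_i)$ keeps the mesh small, which uses that $\phi(G)$ is dense (or, when it is not, a separate argument controlling $\rho$ and $\rho'$ on the complementary gaps, where both act by the same fixed interpolation). Making the inductive placement rule genuinely canonical—so that combinatorial agreement of the orders forces \emph{exact} agreement of the embeddings rather than mere closeness—is what removes the remaining quantitative slack; I would be careful that the identical rule, including the extension across gaps, is applied to $c$ and to every $c'\in V$.
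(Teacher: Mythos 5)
Your reduction to a fixed canonical realization and your treatment of the dense-orbit case are correct, and there your argument is essentially the paper's (Lemma \ref{lem continuity}): exact agreement of the embeddings on a large finite set of group elements, followed by the monotonicity-plus-fine-mesh comparison. The genuine gap is the non-dense case, which you flag as "the main obstacle" but then dismiss with a claim that is false: on the gaps of $\overline{\phi(G)}$ it is \emph{not} true that $\rho$ and $\rho'$ "act by the same fixed interpolation." The canonical realization $\rho'$ of a nearby order $c'$ interpolates affinely across the gaps of the closure of \emph{its own} orbit $\phi'(G)$, not across the gaps of $\overline{\phi(G)}$. Agreement of $c'$ with $c$ on triples from $\{g_0,\dots,g_m\}$ pins down only the finitely many points $\phi'(g_j)=\phi(g_j)$, $j\le m$; the remaining points of $\phi'(G)$ are free to enter a gap $J$ of $\overline{\phi(G)}$, and then $\rho'(g_i)$ restricted to $J$ is dictated by those points rather than by the affine extension, so it can differ substantially from $\rho(g_i)$ on $J$ whenever the interval $\rho(g_i)(J)$ is long. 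This is exactly the situation for the orders of greatest interest in the paper (ping-pong orders on free groups, the blown-up examples of Section \ref{more properties sec}), whose realizations have wandering intervals of definite length.

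The paper's proof of the non-dense case uses two ingredients your proposal lacks. First, a structural fact: if the orbit of a dynamical realization is not dense, then the basepoint is \emph{isolated} in its orbit; this is proved via the "minimality" of dynamical realizations (Proposition \ref{semiconj prop}), since otherwise the orbit closure would be a Cantor set and collapsing its complementary intervals would contradict that proposition. Consequently there is an interval $I=(x_0,\rho(t)(x_0))$ containing no orbit points whose translates $\rho(g)(I)$ are pairwise disjoint. Second, and crucially, one must abandon the canonical realization of $c'$: the paper conjugates it by a correcting homeomorphism supported on the finitely many translates $\rho(g)(I)$, $g\in S\cdot S$, chosen so that the conjugated realization agrees with $\rho$ \emph{exactly} on those intervals; outside them, the complementary intervals have length less than $\epsilon$ by the choice of $S$, so the conjugated realization lies in $O_{(S,\epsilon)}(\rho)$. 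The statement only requires \emph{some} dynamical realization of $c'$ to land in $U$, and exploiting this freedom within the conjugacy class is precisely what rescues the non-dense case; your insistence on using the one canonical realization for every $c'$ is what breaks there.
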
 

An analogous result holds for left orders and actions on $\R$.  
With this Proposition and several other tools, we give a complete characterization of isolated left and circular orders in terms of the dynamics (namely, rigidity) of their dynamical realization.  

\begin{theorem} \label{characterization thm}
Let $G$ be a countable group.  A circular order on $G$ is isolated if and only if its dynamical realization $\rho$ is \emph{rigid} in the following strong sense:  for every action $\rho'$ sufficiently close to $\rho$ in $\Hom(G, \Homeo_+(S^1))$ there exists a continuous, degree 1 monotone map $h: S^1 \to S^1$ fixing the basepoint $x_0$ of the realization, and such that $\rho(g) \circ h  = h\circ \rho'(g) $ for all $g \in G$.  
\end{theorem}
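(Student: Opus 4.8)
The plan is to prove both implications by reducing the strong rigidity condition to a single combinatorial statement about the orbit of the basepoint, which I will call \emph{weak order preservation}. Writing $O = \{\rho(g)x_0 : g \in G\}$ and $O' = \{\rho'(g)x_0 : g\in G\}$, I want to understand when the equivariant orbit map $\phi\colon \rho(g)x_0 \mapsto \rho'(g)x_0$ fails to strictly reverse the cyclic order of any triple. The first step is an extension lemma: if $\phi$ never strictly reverses cyclic order (collapses being allowed), then $\phi$ extends to a monotone degree $1$ map $h\colon S^1 \to S^1$ with $h\circ\rho(g) = \rho'(g)\circ h$ for all $g$. One builds $h$ on $\overline{O}$ by continuity of monotone maps, then across each complementary gap either by a monotone homeomorphism onto the image interval or, if the endpoints have equal image, by collapsing; equivariance on $O$ (immediate from the definition of $\phi$) propagates the construction across the $G$-orbits of gaps, and $h(x_0)=\rho'(e)x_0=x_0$ gives the basepoint condition. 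Conversely, if such an $h$ exists then, since every monotone degree $1$ map weakly preserves cyclic order and $h$ carries $O$ to $O'$, the map $\phi$ cannot strictly reverse any triple. Thus $\rho$ is strongly rigid if and only if, for every $\rho'$ near $\rho$, the orbit map $\phi$ preserves cyclic order weakly.

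For the direction \emph{rigid $\Rightarrow$ isolated}, I argue by contraposition. If $c$ is not isolated, choose circular orders $c_n \to c$ with $c_n \neq c$. By Proposition~\ref{cont prop2} I may pass to dynamical realizations $\rho_n$ of $c_n$ with $\rho_n \to \rho$. Strong rigidity then produces, for large $n$, monotone degree $1$ maps $h_n$ fixing $x_0$ with $h_n\circ \rho(g)=\rho_n(g)\circ h_n$, so $h_n$ sends the $\rho$-orbit of $x_0$ to the $\rho_n$-orbit of $x_0$. Because each $\rho_n$ is a dynamical realization of the genuine circular order $c_n$, its basepoint orbit is injective and realizes $c_n$, whence $c_n$ takes values in $\{\pm 1\}$ on every triple of distinct elements. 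As $h_n$ weakly preserves cyclic order, a triple with $c=+1$ cannot map to one with $c_n=-1$; since $c_n \neq 0$ on distinct triples this forces $c_n = c$ on all such triples, i.e. $c_n = c$, contradicting $c_n \neq c$. Hence $c$ is isolated.

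For the direction \emph{isolated $\Rightarrow$ rigid}, I must show that isolation rules out reversals of $\phi$ for all nearby $\rho'$, and this is where the main difficulty lies. By definition of the product topology on $\CO(G)$, isolation of $c$ means there is a finite set $T \subseteq G^3$ such that $c$ is the only circular order agreeing with $c$ on $T$; let $F \subseteq G$ collect the finitely many elements appearing in $T$. If $\rho'$ is close enough to $\rho$, the finitely many points $\{\rho'(g)x_0 : g \in F\}$ stay distinct and in the same cyclic order as their $\rho$-counterparts, so $\phi$ respects $c$ on $T$. The crux is to \emph{propagate} this agreement on the finite set $T$ to weak order preservation on all of $G^3$. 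I intend to do this through the correspondence between circular orders on $G$ and left orders on the central extension $\widetilde{G} = G \times_c \Z$: under this correspondence isolation of $c$ transfers to isolation of the associated left order, which in turn yields a finite generating set for its positive cone. Reading a reversal of $\phi$ as a sign change of a positive cone element, one expresses that element as a product of generators and uses equivariance of $\rho'$ to reduce any hypothetical reversal to a reversal among the finitely many generating relations already controlled by the closeness of $\rho'$ on $F$, a contradiction. Granting the absence of reversals, the extension lemma of the first paragraph produces the required $h$.

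The main obstacle is precisely this last propagation step: closeness of $\rho'$ to $\rho$ controls only finitely many orbit points directly, whereas weak order preservation is a statement about infinitely many triples. The difficulty is that a small perturbation need not send the relevant arc into itself under individual generators, so a naive induction on the semigroup structure fails; the finite generating set coming from isolation must be exploited together with the global $\widetilde{G}$-action rather than relation by relation. I expect the heart of the proof to consist of showing that the finitely many order relations certified by $F$ already certify, via this central extension and the dynamics of $\rho$, every order relation of $O$, so that no reversal can first appear far out in $G$.
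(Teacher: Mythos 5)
Your ``rigid $\Rightarrow$ isolated'' direction is essentially correct and is in substance the paper's own argument: continuity of dynamical realization (Proposition \ref{cont prop2} / Lemma \ref{lem continuity}) plus the observation that a degree 1 monotone map fixing $x_0$ forces the orbit order of $\rho_n$ to agree with $c$ (the paper phrases this via Proposition \ref{semiconj prop}). The genuine gap is in the direction you yourself flag as the main obstacle, ``isolated $\Rightarrow$ rigid'': it is a gap both because the crucial step is deferred rather than proved, and because the route you propose would not work. First, the propagation you are seeking is immediate from the definition of isolation and needs no central extension: if $x_0$ has trivial stabilizer under $\rho'$, then $g \mapsto \rho'(g)(x_0)$ induces a bona fide circular order $c'$ on \emph{all} of $G$; closeness of $\rho'$ to $\rho$ makes $c'$ agree with $c$ on the finite set $F$, and isolation then says $c' = c$ outright. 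That single sentence is the entire propagation --- relations ``far out in $G$'' are controlled because $c'$ is globally a circular order agreeing with $c$ on $F$, not because reversals can be reduced to generators. Second, your proposed route is flawed on its own terms: the transfer of isolation of $c$ to isolation of the left order on $\widetilde G$ (the paper's Proposition \ref{LO prop}, via Lemma \ref{open lem}) requires $G$ to be finitely generated, whereas the theorem concerns arbitrary countable $G$ (the paper's remark after Lemma \ref{open lem} shows this hypothesis cannot be dropped); and an isolated left order is not known to have a finitely generated positive cone --- the known implication goes the other way --- so ``a finite generating set for its positive cone'' is unjustified.

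What actually requires work in this direction, and what your proposal never addresses, is the possibility that $x_0$ has \emph{nontrivial} stabilizer under a nearby $\rho'$, i.e. that your orbit map $\phi$ collapses points. Your extension lemma with ``collapses being allowed'' is false in that generality: if, say, $\phi$ collapses a dense orbit onto three points, any continuous $h$ agreeing with $\phi$ on the orbit would be constant on each of three closed arcs that share endpoints, hence globally constant, and so could not be a surjective degree 1 monotone map. Thus collapsing cannot be absorbed into the extension lemma; it must be excluded. The paper excludes it using the order-completion Lemma \ref{completion lemma}: a nontrivial stabilizer $K$ yields an order on the cosets of $K$ that can be completed to a circular order on $G$ in at least two distinct ways, both agreeing with $c$ on $F$, contradicting isolation. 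Once the stabilizer is known to be trivial, $\phi$ is injective, and Proposition \ref{semiconj prop} --- your extension lemma in the injective case, which is the case in which it is true --- produces the required $h$.
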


\noindent The corresponding result for left orders is Theorem \ref{linear version}.  
\medskip

In the course of the proof of  Theorem \ref{characterization thm}, we establish several other facts concerning the relationship between $\Hom(G, \Homeo_+(S^1))$ and $\CO(G)$.     
When combined with standard facts about dynamics of groups acting on the circle, this gives new information about spaces of circular orders.  For example, we prove the following corollary, a special case of which immediately gives a new proof of the main construction (Theorem 4.6) from \cite{BS}.  

\begin{corollary} \label{dense orbit cor}
Suppose $G \subset \Homeo_+(S^1)$ is a countable group acting minimally, and such that some point $x_0$ has trivial stabilizer.  Then the order on $G$ induced by the orbit of $x_0$ is not isolated in $\CO(G)$.   
\end{corollary}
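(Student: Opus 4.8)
The plan is to apply Theorem~\ref{characterization thm}: writing $c$ for the circular order induced by the orbit of $x_0$ and $\rho$ for a dynamical realization of $c$, it suffices to show that $\rho$ is \emph{not} strongly rigid, i.e.\ to produce actions arbitrarily close to $\rho$ admitting no basepoint-fixing monotone degree one semiconjugacy to $\rho$. First I would pin down the dynamics of $\rho$. Since $Gx_0$ is dense in $S^1$, the induced order $c$ is order-dense, so in the realization the basepoint again has dense orbit and trivial stabilizer, and $\rho$ is (semi)conjugate to the given embedding, sharing its dynamical type. By the classification of group actions on $S^1$, a dense orbit rules out an exceptional minimal set $K$ (a point of $K$ has orbit inside $K$, while a point in a gap has orbit accumulating only on $K$ together with countably many gap points, hence nowhere dense), leaving two possibilities: $\rho$ is minimal, or $\rho$ has a finite orbit. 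I treat the minimal case in detail; the finite-orbit case reduces, after cutting $S^1$ along the finite orbit, to a line action with a dense orbit, to which the companion of Theorem~\ref{linear version} applies.

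Assume then that $\rho$ is minimal. The key perturbation is conjugation by a small rotation: set $\rho_\theta = R_\theta\,\rho\,R_{-\theta}$, which tends to $\rho$ uniformly on finite subsets of $G$ as $\theta\to 0$ and so lies in any prescribed neighborhood. Suppose a monotone degree one map $h$ fixes $x_0$ and satisfies $h\circ\rho(g)=\rho_\theta(g)\circ h$ for all $g$. Then $R_{-\theta}\circ h$ commutes with every $\rho(g)$. Minimality forces this commuting map to have no interval of constancy, since the union of such intervals is a proper $\rho(G)$-invariant open set, hence empty; thus $R_{-\theta}\circ h$ is a homeomorphism lying in the centralizer $Z=Z(\rho(G))$, and $h=R_\theta\circ z$ with $z\in Z$ and $z(x_0)=R_{-\theta}(x_0)$.

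Now the dichotomy, which is where the real work lies. If $\rho$ is non-elementary, a classical argument shows $Z$ is finite: it acts freely (a fixed point would yield a proper closed invariant set), hence is conjugate to a group of rotations, and were it infinite then $\rho(G)$ would centralize a dense rotation group and consist of rotations. With $Z$ finite the orbit $Zx_0$ is finite, so for all but finitely many small $\theta$ one has $R_{-\theta}(x_0)\notin Zx_0$ and no such $h$ exists; the $\rho_\theta$ then witness non-rigidity, and $c$ is not isolated. The remaining and most delicate case is when $\rho(G)$ is conjugate into the rotation group: here conjugation by $R_\theta$ does nothing, and trivial stabilizer forces $G$ abelian with $c$ the order induced by an embedding $\chi\colon G\hookrightarrow\R/\Z$. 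In that case I would instead perturb $\chi$ among injective homomorphisms $G\to\R/\Z$ with dense image to realize a nearby but distinct circular order, such an embedding never being isolated in $\Hom(G,\R/\Z)$. The main obstacle is exactly this elementary case together with the centralizer bookkeeping, the perturbation strategy being genuinely different there from the generic minimal case.
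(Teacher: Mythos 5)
Your reduction of the dynamics to ``minimal or finite orbit'' is where the proof breaks, and it breaks at a point the paper has to work around as well. The parenthetical argument you give --- that a point in a gap of an exceptional minimal set $K$ has orbit accumulating only on $K$ plus countably many gap points, ``hence nowhere dense'' --- is fallacious: a countable set of gap points can perfectly well be dense. Indeed, if $G$ acts transitively on the gaps of $K$ and the stabilizer of the gap $I$ containing $x_0$ acts on $I$ with dense orbits, then the orbit of $x_0$ is dense in $S^1$ even though $K$ is an exceptional minimal set. Such actions exist and satisfy all hypotheses of the corollary: take the paper's blow-up construction (the example following Example \ref{ex surfaces}) with $\pi_1(\Sigma_g)\times\Z^2$ in place of $\pi_1(\Sigma_g)\times\Z$, inserting in the blown-up gaps a dense group of translations; the basepoint then has trivial stabilizer and dense orbit, and by Corollary \ref{min set 2} this action \emph{is} the dynamical realization of the induced order. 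Your minimal-case argument cannot run here (without minimality, the commuting map $R_{-\theta}\circ h$ may have intervals of constancy, and the centralizer is enormous), and your finite-orbit reduction does not apply either. The paper covers all non-minimal cases at once by a mechanism your proposal has no substitute for: Theorem \ref{isolated linear part} shows an isolated circular order must induce an \emph{isolated left order} on its linear part (here $\Z^2$ with a dense order, never isolated), the bridge being order completion, Lemma \ref{completion lemma}.

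Two further steps are deferred rather than proved, and they are exactly where the paper does real work. First, your ``most delicate'' sub-case, $\rho(G)$ conjugate into the rotation group: the assertion that a dense embedding $\chi\colon G\hookrightarrow\R/\Z$ is never isolated is precisely the paper's Lemma \ref{abelian lem}, whose proof is genuinely nontrivial --- the non-torsion case uses a $\Q$-vector-space perturbation, but infinite torsion subgroups of $\Q/\Z$ (e.g.\ rotations by $n\pi/2^k$) require the $p$-adic-unit automorphisms $x\mapsto x+p^kx$; citing ``never isolated'' is not available to you. Second, the finite-orbit case is reduced to a linear ``companion'' of the corollary that neither you nor the paper states or proves as such, and which contains the same dense-translation-group difficulty. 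On the positive side, your non-elementary minimal argument (conjugating by small rotations, showing any basepoint-fixing semiconjugacy forces $R_{-\theta}\circ h$ into the centralizer, and that the centralizer is finite) is essentially sound --- modulo tightening the H\"older step, since a free action of the centralizer is only \emph{semi}conjugate to rotations in general; one should argue via an element of irrational rotation number commuting with a minimal action being itself minimal --- and it is a genuinely different route from the paper's, which in that case approximates $c$ by its conjugates along the dense orbit (Proposition \ref{basepoint prop}) and uses Ghys's contracting intervals (Proposition \ref{minimal prop}) to certify the conjugates differ from $c$. But as it stands the proposal proves the corollary only for minimal non-elementary realizations.
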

In particular this shows that, if the dynamical realization of a circular order $c$ on $G$ is minimal, then $c$ is not isolated.   (See Theorem \ref{isolated linear part} and following remarks.)

\paragraph{Isolated orders on free groups and braid groups.} 
In Section \ref{F_n section} we undertake a detailed study of the space of circular orders on free groups.  
As an application of Theorem \ref{characterization thm}, we show:  

\begin{theorem} \label{f_2_iso_thm}
The space of circular orders on the free group on $2n$ generators has infinitely many isolated points.  In fact, there are infinitely many distinct \emph{classes} of isolated points under the natural conjugation action of $F_{2n}$ on $\CO(F_{2n})$.
\end{theorem}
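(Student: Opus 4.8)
The plan is to combine the characterization of isolated orders in Theorem~\ref{characterization thm} with an explicit construction of faithful ``ping-pong'' (Schottky-type) actions of $F_{2n}$ on $S^1$ whose dynamics is structurally stable. The model to keep in mind is $F_{2n} \cong \pi_1(\Sigma_{n,1})$, the fundamental group of a genus-$n$ surface with one boundary component, acting on the circle at infinity through a convex cocompact (funnel) hyperbolic structure; such an action is non-minimal, with limit set a Cantor set $\Lambda \subset S^1$ and wandering gaps permuted by $G$. I would first fix generators $a_1,\dots,a_{2n}$ together with $4n$ pairwise disjoint closed arcs realizing a ping-pong table, so that the resulting action $\rho$ is faithful, and then choose a basepoint $x_0$ in a gap of $\Lambda$ on which the boundary word $\partial$ acts freely. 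Because $x_0$ lies in a gap, its $G$-orbit is not dense, so Corollary~\ref{dense orbit cor} does not obstruct isolation; the orbit of $x_0$ is circularly ordered and determines the order $c \in \CO(F_{2n})$ whose isolation we want to prove.

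The heart of the argument is to verify that the dynamical realization of $c$ is rigid in the sense of Theorem~\ref{characterization thm}. The key input is that the ping-pong inequalities are an \emph{open} condition: any $\rho'$ sufficiently close to $\rho$ in $\Hom(F_{2n},\Homeo_+(S^1))$ still satisfies them on the same arcs, hence is again a faithful ping-pong action with a Cantor limit set $\Lambda'$. Both $\Lambda$ and $\Lambda'$ carry the same equivariant symbolic coding by infinite reduced words in the generators, so the itinerary map produces a canonical homeomorphism $\Lambda \to \Lambda'$ conjugating $\rho|_\Lambda$ to $\rho'|_{\Lambda'}$. I would then extend this across the complementary gaps, collapsing the wandering intervals cut out by the orbit of $x_0$, to obtain a degree-$1$ monotone map $h\colon S^1 \to S^1$ with $h\circ \rho(g) = \rho'(g)\circ h$ for all $g$; normalizing so that $h$ sends the relevant gap to its counterpart yields $h(x_0)=x_0$. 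Applying Theorem~\ref{characterization thm} (after checking, via the continuity machinery behind Proposition~\ref{cont prop2}, that rigidity passes between $\rho$ and the honest dynamical realization obtained by collapsing the residual wandering intervals in the gaps) shows that $c$ is isolated.

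To upgrade this to infinitely many conjugacy classes, I would exploit the fact that conjugate circular orders have conjugate dynamical realizations, so any topological-conjugacy invariant of the realization is an invariant of the conjugacy class of the order. A convenient such invariant is the number of fixed points on $S^1$ of a fixed group element: using the extra flexibility of $\Homeo_+(S^1)$ over $\PSL(2,\R)$, I would build for each $k$ a rigid ping-pong action $\rho_k$ as above in which the generators satisfy the same ping-pong inequalities but the boundary element $\partial$ has exactly $2k$ fixed points. Each $\rho_k$ produces an isolated order $c_k$ by the previous paragraph, and since $\rho_k(\partial)$ and $\rho_\ell(\partial)$ have different numbers of fixed points for $k \neq \ell$, no element of $\Homeo_+(S^1)$ conjugates $\rho_k$ to $\rho_\ell$; hence the $c_k$ lie in infinitely many distinct classes under the conjugation action of $F_{2n}$ on $\CO(F_{2n})$.

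The main obstacle I anticipate is the rigidity step: producing the semiconjugacy $h$ in exactly the form demanded by Theorem~\ref{characterization thm} --- degree $1$, monotone, basepoint-fixing, and intertwining --- directly from the stability of the ping-pong combinatorics, and confirming that rigidity is not lost when passing from the constructed action $\rho$ to the dynamical realization, which differs from $\rho$ by collapsing the wandering intervals inside each gap. A secondary difficulty is the bookkeeping in the family $\{\rho_k\}$: one must prescribe the fixed-point data of $\partial$ while preserving the ping-pong inequalities for the generators, so that each $\rho_k$ remains simultaneously faithful, rigid, and non-minimal.
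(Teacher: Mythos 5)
Your first half --- producing \emph{one} isolated circular order on $F_{2n}$ from a convex cocompact action of the fundamental group of a genus-$n$ surface with one boundary component --- is essentially the paper's own route: the one-boundary-component hypothesis is exactly what makes the Fuchsian ping-pong action act transitively on the gaps of its limit set, so that it \emph{is} the dynamical realization of the order induced by $x_0$ (Lemma \ref{one boundary lem}, via Corollary \ref{min set 2}; with two or more boundary components this fails, Lemma \ref{more boundary lem}). In particular your anticipated difficulty about ``collapsing residual wandering intervals in the gaps'' evaporates: no collapsing is needed. One technical caveat in your rigidity step: for a nearby $\rho'$ the nested intersections of domains may be nondegenerate intervals, so the itinerary coding of $\Lambda'$ need not be injective; the monotone map should therefore be built in the collapsing direction (from the perturbed action onto the realization, as in Proposition \ref{semiconj prop}), not as a homeomorphism $\Lambda\to\Lambda'$. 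The paper sidesteps this entirely by combining Lemma \ref{lem pingpong} with Lemma \ref{lem continuity} (Proposition \ref{ping pong isolated}) rather than constructing the semiconjugacy by hand.

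The genuine gap is the second half, and it is fatal as stated. If each $\rho_k$ satisfies \emph{the same} ping-pong inequalities for the generators (same domains, same containment relations, same position of $x_0$), then Lemma \ref{lem pingpong} says the cyclic order of the orbit of $x_0$ is completely determined by that combinatorial data: the orders $c_k$ you produce are all \emph{equal}, not infinitely many distinct ones. The number of fixed points of the boundary word $\partial$ is an invariant of the auxiliary action you chose, not of the order; and since dynamical realizations are unique up to conjugacy (Corollary \ref{dynam real cor}), at most one of your $\rho_k$ is actually the dynamical realization of this common order. Indeed, giving $\partial$ extra fixed points forces either fixed points inside the gap containing $x_0$ (so its stabilizer no longer acts as the realization of a left order) or intervals complementary to the minimal set missing the orbit of $x_0$, violating Lemma \ref{min set condition}/Corollary \ref{min set 2}; so for $k\geq 2$ your fixed-point invariant is computed on actions to which the principle ``conjugate orders have conjugate realizations'' does not apply. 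To get infinitely many classes one must change the ping-pong \emph{combinatorics} itself while preserving the realization property. The paper does this by passing to the standard $k$-fold lifts of the Fuchsian action: each attracting domain becomes a union of $k$ arcs, the coprimality condition $\gcd(k,2n-1)=1$ keeps the action transitive on gaps (hence a dynamical realization), and the boundary word acquires rotation number $(2n-1)/k$, a conjugacy invariant of the realization that certifies the resulting isolated orders are pairwise non-conjugate.
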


This disproves the conjecture about $\CO(F_2)$ of \cite{BS}.  An alternative, counterexample to the $F_2$ conjecture is given in Section \ref{neighborhood sec}, where we also give an explicit singleton neighborhood of the isolated circular order constructed therein.

We also describe explicitly the dynamics of isolated orders on free groups.  To state  this, let $\{a_1, a_2, ..., a_n\}$ be a set of free generators for $F_n$.  

\begin{theorem} \label{domains thm}
Suppose $\rho$ is the dynamical realization of $c \in \CO(F_n)$.  The order $c$ is isolated if and only if there exist 
disjoint domains $D(s) \subset S^1$ for every $s \in \{a_1^{\pm1}, ..., a_n^{\pm1}\}$, each consisting a finite union of closed intervals, such that 
$\rho(s)(S^1 \setminus D(s^{-1})) \subset D(s)$ holds for all $s$, and the basepoint of $\rho$ is in the complement of the domains $D(s)$.  
\end{theorem}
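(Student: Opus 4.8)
The plan is to prove both implications, treating the reverse direction (existence of ping-pong domains implies isolation) as the more routine one and the forward direction (isolation implies ping-pong) as the substantive one. Throughout write $S = \{a_1^{\pm 1}, \ldots, a_n^{\pm 1}\}$ and, for $s \in S$, let $P_s$ denote the set of orbit points $\rho(w)x_0$ where $w$ ranges over reduced words whose first letter is $s$; together with $x_0$ these partition the orbit $\rho(F_n)x_0$ (which is injective, so $x_0$ has trivial stabilizer). The combinatorial fact driving everything is that if $w$ is reduced and does not begin with $s^{-1}$ then $sw$ is reduced and begins with $s$; hence $\rho(s)$ carries $\{x_0\} \cup \bigcup_{s' \neq s^{-1}} P_{s'}$ into $P_s$. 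The theorem asserts that, exactly when $c$ is isolated, this orbit-level ping-pong can be promoted to genuine domains $D(s)$ that are finite unions of intervals.

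For the reverse direction, assume the $D(s)$ exist and take them open (enlarging slightly preserves the strict inclusions, since $S^1 \setminus D(s^{-1})$ is compact and its image lies in the open set $D(s)$). The ping-pong inclusions are then an open condition on $\rho$: every $\rho'$ in a suitable neighborhood $U$ of $\rho$ in $\Hom(F_n, \Homeo_+(S^1))$ satisfies $\rho'(s)(S^1 \setminus D(s^{-1})) \subset D(s)$ with the \emph{same} domains. For such $\rho'$, a standard itinerary argument shows that for a reduced word $w = s_1 \cdots s_k$ the point $\rho'(w)x_0$ lies in the nested set $D(s_1) \cap \rho'(s_1)D(s_2) \cap \cdots$, and that the cyclic order of any two orbit points is determined by the first letter at which their codings diverge together with the fixed cyclic arrangement of the $D(s)$. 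Since $\rho$ and $\rho'$ share the domains and their arrangement, and all maps preserve orientation, the order $\rho'$ induces on the orbit of a point in the central region $S^1 \setminus \bigcup_s D(s)$ coincides with $c$. Combined with Proposition \ref{cont prop2} — every order sufficiently close to $c$ has a realization inside $U$ — this forces every nearby order to equal $c$, so $c$ is isolated. The one point requiring care is that the basepoint of the nearby realization sits in the central region, which is arranged using that $x_0$ has trivial stabilizer and lies in the interior of that region.

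For the forward direction, assume $c$ is isolated. By the contrapositive of Corollary \ref{dense orbit cor} (using the trivial stabilizer just noted), the orbit of $x_0$ is not dense, so $\Lambda = \overline{\rho(F_n)x_0}$ is a proper closed invariant subset $\Lambda \subsetneq S^1$ whose complement is a nonempty $G$-invariant union of open gaps. I would first use isolation, through the strong rigidity of Theorem \ref{characterization thm}, to show that the closures $\overline{P_s}$ are pairwise disjoint: if two met, one could slide orbit points across the contact point to produce an action arbitrarily close to $\rho$ whose orbit has a genuinely transposed cyclic order, which no degree-$1$ monotone $h$ fixing $x_0$ can intertwine with $\rho$ — contradicting rigidity, hence isolation. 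Granting disjointness, the $\overline{P_s}$ are separated by gaps, and I would then argue that isolation further forces the arrangement to be \emph{finite}: each $\overline{P_s}$ meets only finitely many maximal gaps, so its convex hull is a finite union of intervals. Choosing disjoint open $D(s) \supset \overline{P_s}$ inside these hulls, with $x_0$ and each $P_{s'}$ ($s' \neq s^{-1}$) in the complement of $D(s^{-1})$, the inclusion $\rho(s)(\{x_0\} \cup \bigcup_{s' \neq s^{-1}} P_{s'}) \subset P_s$ extends by density and continuity to $\rho(s)(S^1 \setminus D(s^{-1})) \subset D(s)$, giving the required ping-pong.

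The main obstacle is the forward direction, and specifically the two structural claims: disjointness of the $\overline{P_s}$ and finiteness of the interval decomposition. Disjointness is where the full strength of isolation must be converted into geometry of the orbit closure, and finiteness is what rules out the "infinite ping-pong'' configurations corresponding to non-isolated orders; both amount to turning a hypothetical violation into an explicit nearby order distinct from $c$, and this conversion is the delicate step. By contrast the reverse direction is essentially the classical stability of ping-pong combined with Proposition \ref{cont prop2}, the only subtlety being the bookkeeping of basepoints noted above.
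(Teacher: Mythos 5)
Your reverse direction is essentially the paper's argument (stability of the ping-pong configuration under perturbation, the ping-pong lemma determining the orbit order, and continuity of dynamical realization via Proposition \ref{cont prop2}), and the first half of your forward direction also tracks the paper: disjointness of the closures $\overline{P_s}$ is exactly the paper's key Claim, proved there by a perturbation whose support must be chosen to avoid the images of $x_0$ under all initial strings of the two words involved (this bookkeeping matters, since the letter $s$ being perturbed may reappear inside those words), followed by order completion. Your sketch of that step has the right idea even if you do not execute it.

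The genuine gap is your final step. It is false that, having chosen disjoint open sets $D(s) \supset \overline{P_s}$ arbitrarily (subject only to $x_0$ and the $P_{s'}$, $s' \neq s^{-1}$, avoiding $D(s^{-1})$), the inclusion $\rho(s)\bigl(S^1 \setminus D(s^{-1})\bigr) \subset D(s)$ follows ``by density and continuity.'' Density and continuity only give the inclusion on the orbit closure $\Lambda$, namely $\rho(s)\bigl(\Lambda \setminus \overline{P_{s^{-1}}}\bigr) \subset \overline{P_s}$; but $S^1 \setminus D(s^{-1})$ also contains most of each \emph{mixed} gap of $\Lambda$ (a complementary interval with endpoints in two different sets $\overline{P_t}$). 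If $J = (\alpha, \beta)$ is such a gap with $\alpha \in P_t$, $t \neq s^{-1}$, and $\beta \in P_{s^{-1}}$, then $\rho(s)(J)$ is again a mixed gap: its near endpoint $\rho(s)(\alpha)$ lies in $P_s$, but its far endpoint $\rho(s)(\beta)$ does \emph{not} lie in $\overline{P_s}$, because $\rho(s)$ carries $P_{s^{-1}}$ onto $\{x_0\} \cup \bigcup_{u \neq s} P_u$. Ping-pong requires $D(s)$ to contain $\rho(s)\bigl(J \setminus D(s^{-1})\bigr)$; if $D(s^{-1})$ penetrates only slightly (or not at all) into $J$, this set reaches arbitrarily close to $\rho(s)(\beta)$, forcing the closed set $D(s)$ to contain a point of a different domain --- a contradiction. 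So the penetrations of the domains into the finitely many mixed gaps must be coordinated across all generators simultaneously; this is precisely the content of the paper's Step 3, an iterative construction choosing points $p,q$ in each mixed gap $I$ and propagating compatible choices to $\rho(s^{-1})(I)$ and $\rho(t^{-1})(I)$, together with a termination argument. That construction in turn needs the fact that mixed gaps have endpoints in the orbit $F_n(x_0)$ itself (not merely in $\Lambda$), which the paper extracts from Theorem \ref{isolated linear part} plus the nonexistence of isolated left orders on free groups of rank at least $2$ --- an ingredient absent from your outline. Finally, your finiteness claim as phrased (``each $\overline{P_s}$ meets only finitely many maximal gaps'') is incorrect: $\overline{P_s}$ abuts infinitely many gaps internal to its own block; what compactness plus disjointness actually yield, and what is needed, is that only finitely many gaps are mixed.
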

Note that the conclusion of this theorem is exactly the condition in the classical ping-pong lemma.  

These dynamical realizations have a particularly nice description under the additional assumption that the domains $D(s)$ are connected sets:  

\begin{theorem} 
Let $c \in \CO(F_n)$ have dynamical realization $\rho$ that satisfies the hypotheses of Theorem \ref{domains thm}.  If, additionally, the domains $D(s)$ are connected, then $n$ is even, and the dynamical realization of $c$ is topologically conjugate to a representation $F_n \to \PSL(2, \R) \subset \Homeo_+(S^1)$ corresponding to a hyperbolic structure on a genus $n/2$ surface with one boundary component.    Moreover, each such representation  $F_n \to \PSL(2, \R)$ arises as the dynamical realization of an isolated circular order.  
\end{theorem}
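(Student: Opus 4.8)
The plan is to pass from the combinatorial ping-pong data provided by Theorem~\ref{domains thm} to a genuine hyperbolic (Fuchsian) side-pairing, identify the quotient surface, and then use the defining property of dynamical realizations to pin down its topology. First I would record the picture supplied by Theorem~\ref{domains thm} together with the connectedness hypothesis: there are $2n$ pairwise disjoint closed arcs $D(s)$, $s \in \{a_1^{\pm 1}, \dots, a_n^{\pm 1}\}$, each a single interval, with $\rho(s)\big(S^1 \setminus D(s^{-1})\big) \subset D(s)$; in particular $\rho(s)$ carries the closed complementary arc of $D(s^{-1})$ onto a subarc of $D(s)$, and $\rho(s^{-1})$ does the reverse. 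Viewing $S^1$ as $\partial \mathbb H^2$ in the disk model, I would replace each arc $D(s)$ by the geodesic $\gamma_s$ joining its two endpoints and the half-plane $H_s$ it bounds. Choosing a Fuchsian group with the same endpoint data, the inclusions above become the classical ping-pong inclusions $\rho_0(s)(\mathbb H^2 \setminus H_{s^{-1}}) \subset H_s$, so $\rho_0\colon F_n \to \PSL(2,\mathbb R)$ is discrete, faithful, and free with fundamental domain the polygon $P = \mathbb H^2 \setminus \bigcup_s H_s$, whose $2n$ geodesic sides are glued in pairs $\gamma_{s^{-1}} \leftrightarrow \gamma_s$ by the generators. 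Because $\rho$ and $\rho_0$ have identical ping-pong combinatorics, their Cantor limit sets carry the same itinerary coding; I would build the topological conjugacy first on the limit sets and then extend affinely across complementary gaps, obtaining that $\rho$ is topologically conjugate to $\rho_0$.

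Next I would identify the quotient $\Sigma = \mathbb H^2/\rho_0(F_n)$. Since $\pi_1(\Sigma) = F_n$ is free of rank $n$, we have $\Sigma \cong \Sigma_{g,b}$ with $2g + b - 1 = n$, and the number of boundary components $b$ equals the number of $\rho_0(F_n)$-orbits of components of the domain of discontinuity $\Omega = S^1 \setminus \Lambda$, where $\Lambda$ is the limit set. The heart of the argument is to show $b = 1$, and here I would invoke the defining minimality of the dynamical realization: every connected component $I$ of $S^1 \setminus \overline{\rho(G)x_0}$ has a point of the orbit $\rho(G)x_0$ in its closure. Connected domains force $\Lambda$ to be a Cantor set, so $\Omega \neq \emptyset$; and the basepoint $x_0$ must lie in $\Omega$, since (the action on $\Lambda$ being minimal) a point of $\Lambda$ would make every component of $\Omega$ a superfluous gap. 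Thus $x_0$ lies in a single component $J_0$, and the orbit meets only the components in $\rho(G)J_0$. If $b \geq 2$ there is a component $J_1$ in another orbit; its endpoints lie in $\Lambda$ and so are not orbit points (they have nontrivial stabilizer, whereas $x_0$ does not), while its interior contains no orbit point. Hence $J_1$ is itself a component of $S^1 \setminus \overline{\rho(G)x_0}$ whose closure misses the orbit, contradicting minimality. Therefore $b = 1$, so $n = 2g$ is even and $\Sigma = \Sigma_{n/2,1}$, a genus $n/2$ surface with one boundary component.

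For the converse, given a hyperbolic structure on $\Sigma_{g,1}$ with holonomy $\rho_0\colon F_{2g} \to \PSL(2,\mathbb R)$ on the standard generators $a_1,b_1,\dots,a_g,b_g$, I would read off from the fundamental $4g$-gon a system of $2n = 4g$ connected arcs $D(s)$ (neighborhoods of the endpoints of the side-pairing geodesics) realizing the ping-pong inclusions. By the converse direction of Theorem~\ref{domains thm}, the circular order induced on the orbit of a basepoint placed in the unique boundary gap is isolated with connected domains; and since there is a single orbit of complementary gaps, the argument of the previous paragraph shows $\rho_0$ itself has no superfluous gaps, hence is conjugate to the dynamical realization of that order. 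Thus every such representation arises as claimed.

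I expect the main obstacle to be the boundary count $b = 1$: this is exactly where the hypotheses enter nontrivially, since the rank-one case $\Sigma_{0,2}$ and the pair of pants $\Sigma_{0,3}$ show that a connected-domain ping-pong action need not be a dynamical realization unless all complementary gaps lie in one orbit. Making rigorous the ``no superfluous gaps'' property and its interaction with the Fuchsian gap structure is the crux; a secondary technical point is the construction of the topological conjugacy to $\rho_0$, which requires matching the symbolic codings of the two Cantor limit sets.
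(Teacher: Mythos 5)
Your overall architecture matches the paper's: replace $\rho$ by a Fuchsian action $\rho_0$ with the same endpoint and containment data, conjugate the two, count the boundary components of $\H^2/\rho_0(F_n)$ using the fact that dynamical realizations have no ``superfluous gaps,'' and run the construction backwards for the converse. However, there is a genuine gap at the conjugacy step. It is false that identical ping-pong combinatorics alone implies topological conjugacy, and false that the itinerary coding of the limit set of a homeomorphism ping-pong action is injective. For $\rho_0 \subset \PSL(2,\R)$ the nested domains $D(s_1,\dots,s_k) = \rho_0(s_1\cdots s_{k-1})D(s_k)$ shrink to single points (this is hyperbolic geometry, cf.\ the paper's citation of \cite{Dalbo}), but for a homeomorphism action they may shrink to nondegenerate intervals. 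Concretely, perform a Denjoy blow-up of $\rho_0$ along the orbit of a point of its limit set: the result is again a ping-pong action with connected domains and exactly the same cyclic order and containment relations, yet it is not conjugate to $\rho_0$ --- the two endpoints of an inserted gap share a single itinerary, so no coding-induced conjugacy exists. What excludes such examples is precisely the hypothesis that $\rho$ is a dynamical realization, which in your write-up enters only later, in the boundary count. The paper invokes it exactly at this step: the orbit-matching map $f$ extends continuously to $\overline{\rho(F_n)(x_0)}$ because the $\PSL(2,\R)$-side nested domains are points, and then Proposition \ref{semiconj prop} (applied to $\rho$ as the dynamical realization of the order induced by the $\rho_0$-orbit of $x_0$) produces a continuous equivariant inverse, forcing $f$ to be injective, hence a homeomorphism onto its image; only then can one extend over complementary intervals.

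A related, smaller issue: the property you call ``the defining minimality of the dynamical realization'' (every component of $S^1 \setminus \overline{\rho(G)x_0}$ has orbit points in its closure) is not a definition but a theorem; it is in essence the paper's Lemma \ref{min set condition}, proved again via Proposition \ref{semiconj prop}. Its converse, Corollary \ref{min set 2}, is what your ``moreover'' direction needs in order to show that the one-ended Fuchsian action really \emph{is} a dynamical realization; there one must also check that the stabilizer of the gap containing the basepoint acts on that gap as the dynamical realization of a left order on $\Z$ (automatic for a hyperbolic boundary element, but it must be said --- this is the content of the paper's Lemma \ref{one boundary lem}). You flag these facts as the crux and defer them, which is fair in an outline; but since the same facts are also what rescue the conjugacy step, the proposal's logic is in the wrong order: the consequences of being a dynamical realization must be established first and then used twice, once for the conjugacy to $\rho_0$ and once for the count $b=1$, rather than only for the latter.
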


We note that no analog of Theorem \ref{f_2_iso_thm} was previously known for any group, even for left orders.   In \cite{Navas orders}, Navas asked \textit{What can be said in general about the set of isolated (left) orders on a group, up to conjugacy?  For instance, is it always finite?}   A corollary of Theorem \ref{f_2_iso_thm} answers this in the negative: 

\begin{corollary}  The pure braid group $P_3 \cong F_2\times\Z$ has infinitely many distinct conjugacy classes of isolated left orders.
\end{corollary}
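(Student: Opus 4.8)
The plan is to lift isolated circular orders on $F_2$ to isolated left orders on $P_3\cong F_2\times\Z$, using the central $\Z$ as the deck group of the universal covering $\R\to S^1$, and then to feed in Theorem \ref{f_2_iso_thm} with $n=1$. Concretely, let $c$ be an isolated circular order on $F_2=\langle a,b\rangle$, with dynamical realization $\rho\colon F_2\to\Homeo_+(S^1)$ and basepoint $x_0$. Choose lifts of $\rho(a),\rho(b)$ to the group $\widetilde{\Homeo}_+(S^1)$ of homeomorphisms of $\R$ commuting with the translation $T(x)=x+1$; since $F_2$ is free these extend to a homomorphism $\hat\rho\colon F_2\to\widetilde{\Homeo}_+(S^1)$, which is a splitting of the pulled-back central extension (trivial because $H^2(F_2;\Z)=0$, so the pullback is exactly $F_2\times\Z=P_3$). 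Setting $\tilde\rho((g,n))=T^n\hat\rho(g)$ gives a faithful action $\tilde\rho\colon P_3\to\Homeo_+(\R)$ whose central generator $z=(e,1)$ acts as $T$. The $\tilde\rho$-orbit of a lift $\tilde x_0$ of $x_0$ defines a left order $<_c$ on $P_3$ in which $z$ is positive and cofinal, and $\tilde\rho$ is conjugate to the dynamical realization of $<_c$; descending the orbit to $\R/\langle T\rangle=S^1$ recovers $c$. The assignment $c\mapsto{<_c}$ is continuous by Proposition \ref{cont prop2} together with its linear analogue.

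Next I would transfer isolation. Since $c$ is isolated, Theorem \ref{characterization thm} gives that $\rho$ is strongly rigid on $S^1$, and the goal is to show $\tilde\rho$ is strongly rigid on $\R$ in the sense of Theorem \ref{linear version}, which then forces $<_c$ to be isolated. Given $\tilde\rho'$ near $\tilde\rho$, one uses that $z$ is central and cofinal to control $\tilde\rho'(z)$ and to descend $\tilde\rho'$ to a perturbation $\rho'$ of $\rho$ on the circle, applies the rigidity of $\rho$ to obtain the semiconjugacy $h$ fixing $x_0$, and lifts $h$ through the covering to the required monotone $\tilde h$ on $\R$ fixing $\tilde x_0$. \textbf{This is the main obstacle}: a perturbation of the \emph{line} action need not be the lift of any circle action (the central $\tilde\rho'(z)$ may fail to be a translation), and, more seriously, several distinct left orders on $P_3$ descend to the same $c$ — namely those whose cut point lies in different complementary gaps of the orbit closure — so isolation of $c$ in $\CO(F_2)$ does not \emph{formally} yield isolation of the distinguished lift $<_c$ in $\LO(P_3)$. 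I expect to resolve this using the ping-pong structure of Theorem \ref{domains thm}: for an isolated order the orbit of $x_0$ is not dense, so the complementary gaps allow finitely many sign conditions both to pin down the circular order $c$ and to locate $\mathrm{id}$ inside the basepoint gap, thereby separating $<_c$ from the finitely many other lifts that could be nearby. This is precisely the explicit singleton-neighborhood analysis carried out in Section \ref{neighborhood sec}.

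Finally I would verify that distinct conjugacy classes are preserved and conclude. Conjugation by $(g,n)\in P_3$ acts on $<_c$ only through its image $g\in F_2=P_3/Z(P_3)$, since conjugation by the central $\Z$ fixes every left order; moreover the descent map is equivariant with respect to this action and the conjugation action of $F_2$ on $\CO(F_2)$. Hence if $<_{c_1}$ and $<_{c_2}$ are conjugate in $\LO(P_3)$ then $c_1$ and $c_2$ are conjugate in $\CO(F_2)$, so nonconjugate isolated circular orders on $F_2$ produce nonconjugate isolated left orders on $P_3$. Since Theorem \ref{f_2_iso_thm} with $n=1$ provides infinitely many conjugacy classes of isolated circular orders on $F_2$, this yields infinitely many conjugacy classes of isolated left orders on $P_3$, as claimed.
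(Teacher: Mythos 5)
Your first and last steps are essentially correct and match the paper: the lifted action $\tilde\rho$ whose central generator acts by unit translation does realize a left order $<_c$ on $P_3$ in which $z$ is cofinal (this is the content of Proposition \ref{cofinal prop}), and nonconjugacy does pass from $\CO(F_2)$ to $\LO(P_3)$, since conjugation by central elements fixes every left order and descent is equivariant (the paper instead distinguishes the orders via translation numbers of $[a,b]$, but your argument is sound). The genuine gap is the middle step, which you yourself label ``the main obstacle'' and then only express a hope of resolving. Your route --- prove that $\tilde\rho$ is strongly rigid in $\Hom(P_3,\Homeo_+(\R))$ and invoke Theorem \ref{linear version} --- requires exactly the fact you cannot access: that a perturbation $\tilde\rho'$ of $\tilde\rho$ still has $\tilde\rho'(z)$ acting freely on $\R$ (equivalently, at the level of orders, that every left order near $<_c$ still has $z$ cofinal); without this there is no circle action to descend to, and the rigidity of $\rho$ on $S^1$ cannot be brought to bear. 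The tools you propose to fill this hole, Theorem \ref{domains thm} and the singleton-neighborhood analysis of Section \ref{neighborhood sec}, are statements about circular orders on $F_2$; they say nothing about left orders on $P_3$ in which $z$ fails to be cofinal, so they cannot close the loop.

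The missing idea is algebraic, not dynamical: Lemma \ref{open lem} shows that for a finitely generated group the set of left orders in which a fixed central element $z$ is cofinal is \emph{open} in the space of left orders --- one bounds each generator between consecutive powers of $z$, and centrality propagates these finitely many inequalities to every word. Granting this, every left order $<'$ sufficiently close to $<_c$ has $z$ cofinal and hence descends to a circular order on $F_2$, with no perturbation of actions ever needed. Your second worry (several distinct left orders on $P_3$ descending to the same $c$; they differ by an automorphism of the extension, i.e.\ by an element of $\Hom(F_2,\Z)$) is legitimate, but it is resolved by finitely many further inequalities rather than by ping-pong: choose lifts $\hat g_i$ of a determining set for $c$ containing the generators, normalized so that $\id \le \hat g_i < z$; if $<'$ agrees with $<_c$ on $\{\id,\hat g_1,\dots,\hat g_n,z\}$, then the $\hat g_i$ are normalized for $<'$ as well, so the order descended from $<'$ agrees with $c$ on the determining set, hence equals $c$, and the correspondence of Proposition \ref{cofinal prop} then forces $<'$ to coincide with $<_c$. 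This is the paper's Proposition \ref{LO prop}; combined with Theorem \ref{f_2_iso_thm} and the fact that all central extensions of $F_2$ by $\Z$ are trivial, it yields the corollary.
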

\noindent This is proved in $\S$ \ref{LO sec}.  
We expect that the existence of isolated points is not unique to free groups and braid groups and there should be many more examples.  
Further questions are raised in Section \ref{questions sec}.  

\bigskip

\noindent \textbf{Acknowledgements.} 
The authors thank Adam Clay and Andres Navas for their feedback and interest in this work, and Shigenori Matsumoto for helpful comments, including pointing out an error in a previous version.

\section{Background material}\label{sec basic facts}
In this section, we recall some standard facts about left and circular orders.  A reader familiar with orders may wish to skip this section, while the less comfortable reader may wish to consult \cite{Calegari}, \cite{Thurston}, or in the case of left orders, \cite{GOD} for further details. 

\begin{definition}  \label{cocycle def}
[Cocycle definition of circular orders] Let $S$ be a set
. We say that
$ c: S^3\to\{\pm1,0\}$ is a circular order on
$S$ if  \begin{enumerate}[label=\roman*)]
\item $c^{-1}(0)=\triangle(S)$, where $\triangle(S):=\{(a_1,a_2,a_3)\in S^3\mid a_i=a_j,\text{ for some } i\neq j\}$,
\item $c$ is a cocycle, that is $c(a_2,a_3,a_4)-c(a_1,a_3,a_4)+c(a_1,a_2,a_4)-c(a_1,a_2,a_3)=0$ for all $a_1,a_2,a_3,a_4\in S$.

\end{enumerate}
A group $G$ is {\em circularly orderable} if it admits a circular order $c$ which is left-invariant in the sense that $c(u,v,w)=1$ implies $c(gu,gv,gw)=1$ for all $g,u,v,w\in G$.   
\end{definition}

In other words, a circular order on $G$ is a homogeneous 2-cocycle in the standard complex for computing the integral Eilenberg-MacLane cohomology of $G$, which takes the values $0$ on degenerate triples, and $\pm{1}$ otherwise.  

This cocycle condition is motivated by the standard \emph{order cocycle} or \emph{orientation cocycle} for points on the circle.  Say an ordered triple $(x, y, z)$ of distinct points in $S^1$ is \emph{positively oriented} if one can read points $x, y, z$ in order around the circle counterclockwise, and negatively oriented otherwise.  Define the order cocycle $\ord: S^1 \times S^1 \times S^1$ by

$$\ord(x,y,z) = \left\{\begin{array}{rl} 1 & \text{if } (x, y, z) \text{ is positively oriented} \\ -1 & \text{if } (x, y, z) \text{ is negatively oriented} 
\\ 0 & \text{if any two of } x, y \text{ and } z \text{ agree}
.\end{array}
\right. $$
It is easy to check that this satisfies the cocycle condition of Definition \ref{cocycle def}, and is invariant under left-multiplication in $S^1$.  In fact, it is invariant under $\Homeo_+(S^1)$ in the sense that $\ord(x,y,z) = \ord(h(x), h(y), h(z))$ for any orientation preserving homeomorphism $h$.

As mentioned in the introduction, the topology on the space $\CO(G)$ is that inherited from the product topology on $\{\pm1, 0\}^{G \times G \times G}$.   A neighborhood basis of a circular order $c$ consists of the sets of the form 
$\{ c' \in \CO(G) : c'(u,v,w) = c(u,v,w) \text{ for all } u, v, w \in S \}$
where $S$ ranges over all finite subsets of $G$.

\paragraph{Left orders as ``degenerate" circular orders.} \label{parr left orders}
Recall that a left order on a group $G$ is a total order $<$ invariant under left multiplication.  Given $(G, <)$, we can produce a circular order on $G$  by defining $c_<(g_1, g_2, g_3)$ to be the sign of the (unique) permutation $\sigma$ of $\{g_1, g_2, g_3\}$ such that $\sigma(g_1) < \sigma(g_2) < \sigma(g_3)$.

Observe that the left order $c_<$ above is a \emph{coboundary}.  Indeed, if $c'(x,y)$ equals $1$ (respectively $-1$ or $0$) when $x<y$ (respectively $y< x$ or $x=y$), then 
$$c_<(g_1, g_2, g_3) = c'(g_2, g_3)-c'(g_1, g_3)+c'(g_1, g_2).$$ 
Conversely, if a circular order $c$ on a group $G$ is the coboundary of a left-invariant function $c':G^2\to\{\pm 1,0\}$, i.e. we have $c(u,v,w)=c'(v,w)-c'(u,v)+c'(u,v)$ for all $u,v,w\in G$, then one can check that $c'$ defines a left order on $G$ by $x \leq y$ if and only if $c'(x,y)\geq 0$. 
Yet another characterization of the circular order $c_<$ obtained from a left order can be found in \cite[Proposition 2.17]{BS}.

In this sense, we can view $\LO(G)$ as a subset of $\CO(G)$, and give it the subset topology.  It is not hard to see that this agrees with the original topology given in \cite{Sikora}, full details of this are written in \cite{BS}.   Because of this, throughout this paper we frequently take circular orders as a starting point, and treat left orders as a special case.

\paragraph{Dynamical realization.}  We now describe a procedure for realizing a circular order on a group $G$ as an order induced from an action of $G$ on $S^1$.   This starts with the following construction.  

\begin{construction}[Order embedding] \label{embedding def}
Let $G$ be a countable group with circular order $c$, and let $\{g_i\}$ be an enumeration of elements of $G$.  Define an embedding $\iota: G \to S^1$ inductively as follows.  Let $\iota(g_1)$ and $\iota(g_2)$ be arbitrary distinct points.  Then, having embedded $g_1, ..., g_{n-1}$, send $g_{n}$ to the midpoint of the unique connected component of  $S^1 \setminus \{\iota(g_1), ... \iota(g_{n-1})\}$ such that 
$$c(g_i, g_j, g_k) = \ord(\iota(g_i), \iota(g_j), \iota(g_k))$$
holds for all $i, j, k \leq n$.  
\end{construction}

Given an embedding $\iota:G\to S^1$, as above, the left-action of $G$ on itself now gives a continuous, order preserving action by homeomorphism on $\iota(G) \subset S^1$, which extends to a homeomorphism of the closure of $\iota(G)$ in $S^1$. This can be extended to an action by homeomorphisms of $S^1$, for instance by permuting the complimentary interval to the closure of the image by linear maps.  See \cite{Calegari, GOD, Ghys Ens} for more details. We denote the homomorphic embedding just constructed by $\phi_{\iota}:G\to \Homeo_+(S^1)$.

\begin{remark} \label{conj rem}
One can check that taking a different enumeration of the elements of $G$, or a different choice of $\iota(g_1)$ and $\iota(g_2)$ gives embeddings that are conjugate by an orientation-preserving homeomorphism of $S^1$.  The same is true for the corresponding homomorphic embedding $\phi_{\iota}$. Thus it is natural to make the following definition. 
\end{remark}

\begin{definition}[Dynamical realization] \label{dynam def}
Let $G$ be a countable group with circular order $c$.  A \emph{dynamical realization} of $c$ is an embedding $\phi:G \to \Homeo_+(S^1)$ defined as $\phi=h\circ \phi_\iota \circ h^{-1}$, where $\phi_\iota$ is an embedding constructed as above and $h$ is any homeomorphism of $S^1$.  

The \emph{basepoint} of the dynamical realization  $\phi$ is the point $h(\iota(\id))$, where $\id$ denotes the identity element of $G$.  Observe that $\phi(g)(h(\iota(\id)))=h(\iota(g))$.
\end{definition}

By definition, dynamical realizations are all conjugate.  Thus when discussing properties invariant under topological conjugacy, we often speak of {\em the} dynamical realization.

Note that a dynamical realization of a circular order always produces an action of $G$ such that any point in $\iota(G)$ has trivial stabilizer.  Conversely, if $G \subset \Homeo_+(S^1)$ is such that some point $x_0$ has trivial stabilizer, then
$$c(g_1, g_2, g_3) := \ord(g_1(x_0), g_2(x_0), g_3(x_0))$$ 
defines a circular order on $G$.  This is simply the pullback of the order cocycle on $S^1$ under the embedding of $G$ via the orbit of $x_0$.    We say that this is the order \emph{induced by the orbit of} $x_0$. In general, if $G$ does not admit a free orbit, then the circular order on $G/Stab_G(x_0)$ can be extended to a circular order on $G$, for instance, by means of Lemma \ref{completion lemma} below.

For a countable left-ordered group $(G, <)$, there is an analogous construction of a dynamical realization $G \to \Homeo_+(\R)$.  One first enumerates $G$, then defines an embedding $\iota: G \to \R$ inductively by 
$$\iota(g_n) = \left\{ \begin{array}{rcl}  
\max\{ \iota(g_k) : k<n\} + 1 &\text{ if } &g_n > g_k  \text{ for all } k<n \\
\min\{ \iota(g_k) : k<n\} - 1  &\text{ if } &g_n < g_k  \text{ for all } k<n \\
\text{the midpoint of } [g_i, g_j]  &\text{ if } &g_i < g_n < g_j \text{ are successive among } i, j <n
\end{array} \right.
$$
Alternatively, one can check that the construction given above for circular orders produces an action on $S^1$ with a global fixed point whenever $c = c_<$ is a left order (i.e. degenerate) cocycle.  Identifying $S^1 \setminus \{ \ast \}$ with $\R$ gives the dynamical realization of $(G, <)$.

\paragraph{Conjugation.}
We conclude this section by defining the conjugation action of $G$ on its space of orders.   This is important in the study of $\LO(G)$ and $\CO(G)$; indeed, a standard technique to show that an order is \emph{not} isolated is to approximate it by its conjugates \cite{Navas orders, Rivas-Tessera}.  

\begin{definition}[conjugate orders] \label{conj def}
Let $c$ be a circular order on an arbitrary group $G$, and let $g \in G$.  The \emph{g-conjugate order} is the order $c_g$ defined by 
$c_g(x, y, z) = c(xg, yg, zg)$.   An order is called \emph{conjugate} to $c$ if it is of the form $c_g$ for some $g \in G$.    
\end{definition}
Since orders are assumed to be left-invariant, we may equivalently define $c_g(x, y, z) = c(g^{-1}xg, g^{-1}yg, g^{-1}zg)$.  
This gives an action of $G$ on $\CO(G)$ by conjugation, and it is easy to check that this is an action by \emph{homeomorphisms} of $\CO(G)$.  Note that a conjugate of a left order is also a left order, so conjugation also gives an action of $G$ on $\LO(G)$ by homeomorphisms.  
It follows directly from the definition that, given a dynamical realization of $c$ with basepoint $x_0$, a dynamical realization of $c_g$ is given by the same action of $G$ on $S^1$, but with basepoint $g(x_0)$.

\section{A dynamical portrait of left and circular orders} \label{spaces sec} 

We turn now to our main goal of studying the relationship between spaces of actions and spaces of orders.  

Let $\Hom(G, \Homeo_+(S^1))$ denote the set of actions of a group $G$ on $S^1$ by orientation-preserving homeomorphisms, i.e. the set of group homomorphisms $G \to \Homeo_+(S^1)$.  This space has a natural topology; a neighborhood basis of an action $\rho_0$ is given by the sets 
$$O_{(F, \epsilon)}(\rho_0) := \{\rho \in \Hom(G, \Homeo_+(S^1)) \mid  d(\rho(g)(x), \rho_0(g)(x)) < \epsilon \text{ for all } x \in S^1, g \in F\}$$
 where $F$ ranges over all finite subsets of $G$, and $d$ is the standard length metric on $S^1$.  
If $G$ is finitely generated, fixing a generating set $S$ gives an identification of $\Hom(G, \Homeo_+(S^1))$ with a subset of $\Homeo_+(S^1)^{|S|}$ via the images of the generators, and  the subset topology on $\Hom(G, \Homeo_+(S^1))$ agrees with the topology defined above.  

Fixing some point $p \in S^1$, the space $\Hom(G, \Homeo_+(\R))$ of actions of $G$ on $\R$ can be identified with the closed subset 
$$\{ \rho \in \Hom(G, \Homeo_+(S^1) : \rho(g)(p) = p \text{ for all } g \}$$
and its usual (compact--open) topology is just the subset topology.    Given this, our primary focus will be on the larger space $\Hom(G, \Homeo_+(S^1))$ and its relationship with $\CO(G)$; as the $\Hom(G, \Homeo_+(\R)) \leftrightarrow \LO(G)$ relationship essentially follows by restricting to subsets.

As mentioned in the introduction, due to the relationship between circular orders on $G$ and actions of $G$ on $S^1$ given by dynamical realization, it is natural to ask about the relationship between the two spaces $\CO(G)$ and $\Hom(G, \Homeo_+(S^1))$, hoping that the study of one may inform the other.   For instance, one might (naively) propose the following.

\begin{nconj}  \label{naive} 
Let $G$ be a countable group. The space $\CO(G)$ has no isolated points if (or perhaps if and only if) $\Hom(G, \Homeo_+(S^1))$ is connected.   
\end{nconj} 

A supportive example is the case $G = \Z^2$.  It is not difficult to show both that $\Hom(\Z^2, \Homeo_+(S^1))$  is connected and that $\CO(\Z^2)$ has no isolated points.  This kind of reasoning may have motivated the conjecture that $\CO(F_2)$ has no isolated points, since $\Hom(F_2, \Homeo_+(S^1))$ is connected.    However, our disproof of this conjecture for $F_2$ (Theorem \ref{f_2_iso_thm}) shows that the naive reasoning is false.   

Since dynamical realizations are faithful, one might try to improve the Conjecture \ref{naive} by restricting to the subspace of \emph{faithful} actions of a group on $S^1$.  However, the subset of faithful representations in $\Hom(F_2, \Homeo_+(S^1))$ is also connected!  To see this, one can first show that the subset of faithful actions in $\Hom(F_2, \Diff_+(S^1))$ is connected, open, and dense in $\Hom(F_2, \Diff_+(S^1))$ using a transversality argument, as remarked in \cite{Navas 14}.  Since $\Hom(F_2, \Diff_+(S^1))$ is dense in $\Hom(F_2, \Homeo_+(S^1))$, this implies that any faithful action by homeomorphisms can be approximated by one by diffeomorphisms, and hence the space of faithful actions in $\Hom(F_2, \Homeo_+(S^1))$ is also connected.  

As these examples show, the relationship between $\Hom(G, \Homeo_+(S^1))$ and $\CO(G)$ is actually rather subtle. Our next goal is to clarify this relationship. 
  
\begin{convention} 
For the remainder of this paper $G$ will always denote a countable group.  
\end{convention}

\subsection{The relationship between $\Hom(G, \Homeo_+(S^1))$ and $\CO(G)$}

This section provides the groundwork for our dynamical characterization of isolated circular orders, starting with some basic observations.   Let $\Hom(G, \Homeo_+(S^1))/\sim$ denote the quotient of $\Hom(G, \Homeo_+(S^1))$ by the equivalence relation of conjugacy, and equipped with the quotient topology.  Recall that the dynamical realization of a circular order on a countable group is well-defined up to conjugacy in $\Homeo_+(S^1)$.  This defines a natural ``realization" map $R: \CO(G) \to \Hom(G, \Homeo_+(S^1))/\sim$.    Our first proposition is a weaker form of Proposition \ref{cont prop2} from the introduction.  

\begin{proposition}[Dynamical realization is continuous] \label{continuity prop} 
The realization map $R: \CO(G) \to \Hom(G, \Homeo_+(S^1))/\sim$ is continuous.  
\end{proposition}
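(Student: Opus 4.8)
The plan is to show that the realization map $R$ is continuous by verifying that nearby circular orders have dynamical realizations whose generators (or more precisely, whose action on any finite set of orbit points) are close. Since the topology on the quotient $\Hom(G, \Homeo_+(S^1))/\sim$ is the quotient topology, it suffices to produce, for any circular order $c$ and any neighborhood of $R(c)$, a representative dynamical realization $\rho$ lifting $R(c)$ together with a neighborhood $V$ of $c$ in $\CO(G)$ whose members have dynamical realizations landing in a prescribed neighborhood of $\rho$ upstairs. In fact, the cleanest route is to prove the following uniform statement, which is really a continuity-of-construction claim: if $c_k \to c$ in $\CO(G)$, then one can choose order embeddings $\iota_k: G \to S^1$ (as in Construction \ref{embedding def}) that converge pointwise to an order embedding $\iota$ of $c$, and then argue that the induced dynamical realizations converge.

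\medskip

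The key steps I would carry out are as follows. First, fix an enumeration $\{g_i\}$ of $G$ and fix the initial data $\iota(g_1), \iota(g_2)$ used in Construction \ref{embedding def}; use the \emph{same} enumeration and initial points to build $\iota_k$ from $c_k$ and $\iota$ from $c$. The crucial combinatorial observation is that the position of $\iota(g_n)$ among $\iota(g_1), \dots, \iota(g_{n-1})$ is determined entirely by the finitely many values $c(g_i, g_j, g_n)$ with $i, j < n$. Hence, for each fixed $N$, convergence $c_k \to c$ implies that for all large $k$ the orders $c_k$ and $c$ agree on all triples drawn from $\{g_1, \dots, g_N\}$, so the combinatorial cyclic arrangement of $\iota_k(g_1), \dots, \iota_k(g_N)$ coincides with that of $\iota(g_1), \dots, \iota(g_N)$. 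Since the midpoint rule places each new point deterministically inside the correct gap, one gets $\iota_k(g_i) = \iota(g_i)$ for all $i \leq N$ once $k$ is large enough (using the same arbitrary initial points): indeed, the construction is purely combinatorial, so agreement of the orders on the relevant finite set forces literal agreement of the embeddings on that finite set.

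\medskip

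Second, pass from embeddings to homeomorphisms. Let $\rho_k$ and $\rho$ be the dynamical realizations associated to $\iota_k$ and $\iota$. To show $\rho_k \to \rho$ in $\Hom(G, \Homeo_+(S^1))$ (for appropriately chosen representatives), fix a finite set $F \subset G$ and $\epsilon > 0$; I want $d(\rho_k(g)(x), \rho(g)(x)) < \epsilon$ uniformly in $x$ for all $g \in F$ and large $k$. Choose $N$ large enough that the points $\iota(g_1), \dots, \iota(g_N)$ are $\epsilon/2$-dense in $S^1$ and that $F \cdot \{g_1, \dots, g_M\} \subset \{g_1, \dots, g_N\}$ for a suitable $M$ with $\{g_1,\dots,g_M\}$ also $\epsilon/2$-dense. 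For large $k$ we have $\iota_k = \iota$ on $\{g_1, \dots, g_N\}$ by the first step, so $\rho_k(g)$ and $\rho(g)$ agree on the orbit points $\iota(g_j)$, $j \leq M$; because each $\rho_k(g)$ is defined to act affinely on the complementary gaps and both maps agree on an $\epsilon/2$-dense set of endpoints, monotonicity controls the displacement on the gaps, yielding the desired uniform estimate. The main obstacle is precisely this last point: the gaps need not be small, so agreement on a dense net of points does not immediately bound $d(\rho_k(g)(x), \rho(g)(x))$ for $x$ interior to a large gap. I expect to resolve this by using the order-preserving (monotone) property of both homeomorphisms together with the $\epsilon/2$-density of the net $\{\iota(g_j)\}$: a point $x$ lies between two consecutive net points whose images under $\rho(g)$ and $\rho_k(g)$ coincide and are within $\epsilon$ of each other, so $\rho(g)(x)$ and $\rho_k(g)(x)$ both lie in a common arc of length $< \epsilon$. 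Finally, since $R(c_k)$ is by definition the conjugacy class of $\rho_k$, convergence of these chosen representatives gives convergence in the quotient topology, establishing continuity of $R$.
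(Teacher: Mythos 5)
Your first step---literal agreement of the embeddings $\iota_k = \iota$ on $\{g_1,\dots,g_N\}$ once $c_k$ agrees with $c$ on all triples from that set---is correct, and it plays the same role as the paper's choice of a realization $\rho'$ of $c'$ satisfying $\rho'(g)(x_0)=\rho(g)(x_0)$ for $g \in S\cdot S$. The genuine gap is in your second step: you ask to ``choose $N$ large enough that $\iota(g_1),\dots,\iota(g_N)$ are $\epsilon/2$-dense in $S^1$,'' but no such $N$ exists unless the orbit $\iota(G)$ is dense in $S^1$, and in general it is not. For instance, if $c=c_<$ comes from a left order (say on $G=\Z$), the dynamical realization has a global fixed point and the arc between $\iota(g_1)$ and $\iota(g_2)$ contains no further orbit points at all, no matter how many elements are embedded; the same failure occurs whenever the realization has an exceptional minimal set or, more generally, a nontrivial linear part---which, by Theorem \ref{isolated linear part}, includes every isolated circular order, i.e.\ precisely the cases this machinery is built for. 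The paper's proof avoids this by using the freedom you correctly identified at the outset but then never exploited: since the target is the quotient by conjugacy, one may first conjugate $\rho$ so that the \emph{finite} set $\rho(S)(x_0)$ is equally spaced around the circle, hence $\epsilon$-dense, with no density assumption on the orbit. Your construction pins down one specific representative via the fixed enumeration and initial points, and so cannot perform this normalization.

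There is a second, smaller gap: even granting density of the orbit, your claim that consecutive net points have images ``within $\epsilon$ of each other'' is unjustified---a homeomorphism can stretch an arc of length $\epsilon/2$ to one of length close to $1$, so monotonicity only traps $\rho(g)(x)$ and $\rho_k(g)(x)$ in a common arc of uncontrolled length. The paper repairs exactly this with a preimage trick: take $S$ symmetric and require agreement of the orders on $S\cdot S$; if an image interval $\rho(s)(I)$ is long, partition it by the $\epsilon$-dense points of $\rho(S)(x_0)$, and observe that $\rho(s^{-1})$ and $\rho'(s^{-1})$ agree on those partition points, so that $\rho(s)(y)$ and $\rho'(s)(y)$ must land in the same subinterval, of length less than $\epsilon$. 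Your agreement set $F\cdot\{g_1,\dots,g_M\}$ contains no inverses, so this repair is not available as written; to carry it out you would need both the conjugation normalization above and a symmetric set of group elements closed under the relevant products.
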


\begin{proof}
Let $c \in \CO(G)$, and let $\rho$ be a dynamical realization of $c$.  Given a neighborhood $O_{(F, \epsilon)}$ of $\rho$ in $\Hom(G, \Homeo_+(S^1))$, we need to produce a neighborhood $U$ 
of $c$ in $\CO(G)$ such that every circular order $c' \in U$ has a conjugacy representative of its dynamical realization in the $O_{(F, \epsilon)}$--neighborhood of a conjugate of $\rho$.   

Let $S \subset G$ be a finite symmetric set with $F \subset S$ and $|S| > 1/\epsilon$.   After conjugacy of $\rho$, we may assume that $\rho(S)(x_0)$ partitions $S^1$ into intervals of equal length, each of length less than $\epsilon$.   We now show that every circular order $c'$ that agrees with $c$ on the finite set $S \cdot S$ has a conjugate of a dynamical realization in the $O_{(S, \epsilon)} \subset O_{(F, \epsilon)}$-- neighborhood of $\rho$.  

Given such a circular order $c'$, let $\rho'$ be a dynamical realization of $c'$ such that $\rho(g)(x_0) = \rho'(g)(x_0)$ for all $g \in S \cdot S$.    Let $s \in S$.  By construction $\rho(s)$ and $\rho'(s)$ agree on every point of $\rho(S)(x_0)$.   We now compare $\rho(s)$ and $\rho'(s)$ at other points.  Let $I$ be any connected component of $S^1 \setminus S(x_0)$.  Note that $\rho(s)(I) = \rho'(s)(I)$.   Let $y \in I$.   If $\rho(s)(I)$ has length at most $\epsilon$, then as $\rho'(s)(y)$ and $\rho(s)(y)$ both lie in $\rho(s)(I)$, they differ by a distance of at most $\epsilon$.   If $\rho(s)(I)$ has length greater than $\epsilon$, consider instead the partition of $\rho(s)(I)$ by $\rho(S)(x_0) \cap \rho(s)(I)$, this is a partition into intervals of length less than $\epsilon$.    As $s^{-1} \in S$ and $\rho(s^{-1})$ and $\rho'(s^{-1})$ agree on $\rho(S)(x_0)$, considering preimages shows that $\rho(s)(y)$ and $\rho'(s)(y)$ must lie in the same subinterval of the partition, and hence differ by distance at most $\epsilon$. 
\end{proof}

\begin{remark}
Note that the same argument shows that dynamical realization is continuous as a map from $\LO(G)$ to the quotient of $\Hom(G, \Homeo_+(\R))$ by conjugacy in $\Homeo_+(\R)$.  
\end{remark}

The next propositions discuss the partial ``inverse" to the dynamical realization map obtained by fixing a basepoint.    

\begin{notation} Let $G$ be a countable group, and $x_0 \in S^1$.  Let $H(x_0) \subset \Hom(G, \Homeo_+(S^1))$ denote the subset of homomorphisms $G \to \Homeo_+(S^1)$ such that $x_0$ has trivial stabilizer. 
\end{notation}
Each $\rho \in H(x_0)$ induces a circular order on $G$ using the orbit of $x_0$.  This gives a well-defined ``orbit map" $o: H(x_0) \to \CO(G)$.

\begin{proposition}  \label{orbit map prop}
The orbit map $o: H(x_0) \to \CO(G)$ is continuous and surjective.  
\end{proposition}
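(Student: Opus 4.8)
The plan is to treat surjectivity and continuity separately, with the continuity argument resting on the fact that the order cocycle $\ord$ is locally constant on triples of distinct points. For surjectivity, I would begin with an arbitrary $c \in \CO(G)$ and invoke its dynamical realization $\rho_0$ from Definition \ref{dynam def}: by construction $\rho_0$ sends the basepoint $\iota(\id)$ to a point of trivial stabilizer, its orbit under $\rho_0$ is $\iota(G)$, and the order induced by this orbit is exactly $c$. Since $\Homeo_+(S^1)$ acts transitively on $S^1$, I would choose $h \in \Homeo_+(S^1)$ carrying $\iota(\id)$ to $x_0$ and set $\rho = h \rho_0 h^{-1}$. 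Then $x_0$ has trivial stabilizer under $\rho$, so $\rho \in H(x_0)$, and the $\Homeo_+(S^1)$-invariance of $\ord$ guarantees that the order induced by the orbit of $x_0$ is still $c$; that is, $o(\rho) = c$.

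For continuity, fix $\rho \in H(x_0)$ and write $c = o(\rho)$. I would recall from Section \ref{sec basic facts} that a basic neighborhood of $c$ in $\CO(G)$ is indexed by a finite set $S \subset G$ and consists of the orders agreeing with $c$ on every triple from $S$. The goal is therefore to exhibit a neighborhood of $\rho$ in $H(x_0)$ that $o$ maps into this set. The first observation is that, because $x_0$ has trivial stabilizer under $\rho$, the orbit points $\{\rho(g)(x_0) : g \in S\}$ are pairwise distinct, so for each of the finitely many triples of distinct elements $(g_1, g_2, g_3)$ from $S$ the value $\ord(\rho(g_1)(x_0), \rho(g_2)(x_0), \rho(g_3)(x_0)) = \pm 1$ is definite.

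The crux is that $\ord$ is locally constant away from its zero set: each of the strata $\ord^{-1}(1)$ and $\ord^{-1}(-1)$ is open in $(S^1)^3$. Hence for each such triple I can pick $\delta > 0$ so that moving each of the three orbit points by less than $\delta$ leaves the value of $\ord$ unchanged, and then take the minimum of these finitely many $\delta$'s. For any $\rho'$ in the neighborhood $O_{(S, \delta)}(\rho) \cap H(x_0)$ one has $d(\rho'(g)(x_0), \rho(g)(x_0)) < \delta$ for all $g \in S$; membership in $H(x_0)$ keeps the perturbed orbit points distinct, so $\ord$ agrees on every triple from $S$ and thus $o(\rho')$ agrees with $c$ on $S$. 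This is exactly the required continuity.

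I expect the only genuinely substantive point to be the interplay between the defining hypothesis of $H(x_0)$ and the local constancy of $\ord$: triviality of the stabilizer of $x_0$ is precisely what prevents any orbit triple from degenerating under a small perturbation, and it is this non-degeneracy that lets the openness of the oriented strata in $(S^1)^3$ do the work. Everything else amounts to bookkeeping with the product topology on $\Hom(G, \Homeo_+(S^1))$ and the neighborhood basis of $\CO(G)$.
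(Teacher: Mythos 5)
Your proof is correct and follows essentially the same route as the paper: surjectivity via the dynamical realization of an arbitrary order (with the basepoint moved to $x_0$, which you do by conjugation and the paper does by choosing the basepoint in the construction), and continuity directly from the definition of the topology on $\Hom(G, \Homeo_+(S^1))$. Your local-constancy argument for $\ord$ on triples of distinct points is simply the fleshed-out version of the step the paper calls immediate.
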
 

\begin{proof}
To show continuity, given a finite set $S \subset G$ and $\rho \in H(x_0)$, we need to find a neighborhood $U$ of $\rho$ in $H(x_0)$ such that the cyclic order of $\rho'(S)(x_0)$ agrees with that of $\rho(S)(x_0)$ for all $\rho' \in U$.  But the existence of such a neighborhood follows immediately from the definition of the topology on $\Hom(G, \Homeo_+(S^1))$.   Surjectivity of the orbit map follows from the existence of a dynamical realization with basepoint $x_0$, as given in Definition \ref{dynam def} and following remarks).  
\end{proof}

To describe the fibers of the orbit map, we will generalize the following statement about left orders from \cite[Lemma 2.8]{Navas orders}.

\begin{lemma}[Navas, \cite{Navas orders}] \label{navas lemma}
Let $<$ be a left order on $G$, and $\rho_1$ its dynamical realization with basepoint $0 \in \R$. Let $\rho_2 \in \Hom(G, \Homeo_+(\R))$ be an action with no global fixed point, and such that $0$ has trivial stabilizer.  The order induced by the $\rho_2$--orbit of $0$ agrees with $<$ if and only if 
there is a non-decreasing, surjective map $f: \R \to \R$, with $f(0) = 0$, such that $\rho_1(g)  f = f  \rho_2(g)$ for all $g \in G$.  
\end{lemma}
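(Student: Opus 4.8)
The plan is to phrase everything in terms of the two orbit maps $t_i\colon g\mapsto\rho_i(g)(0)$, $i=1,2$. Since $0$ has trivial stabilizer for each action, each $t_i$ is injective, and by hypothesis both are \emph{order embeddings} of $(G,<)$ into $\R$: for $t_1$ this is the defining property of the dynamical realization, and for $t_2$ it is exactly the assumption that the orbit order agrees with $<$. Write $A=t_2(G)$ and $B=t_1(G)$ for the two orbits and let $f_0\colon A\to B$ be the increasing bijection $f_0(t_2(g))=t_1(g)$, which satisfies $f_0(0)=0$. For the ($\Leftarrow$) direction I would argue directly: given such an $f$, evaluating $\rho_1(g)\circ f=f\circ\rho_2(g)$ at $0$ and using $f(0)=0$ shows that $f$ restricts to $f_0$ on $A$; then if $g<h$ we have $t_1(g)<t_1(h)$, and since $f$ is non-decreasing this forces $t_2(g)\le t_2(h)$, with strictness coming from triviality of the stabilizer of $0$. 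As $<$ is total, the $\rho_2$-orbit order coincides with $<$.

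The substance is the ($\Rightarrow$) direction, i.e.\ promoting the combinatorial bijection $f_0$ to a genuine non-decreasing surjection of $\R$ intertwining the two actions. First I would record that the hypothesis that $\rho_2$ has no global fixed point forces $A$ to be unbounded above and below, since otherwise an orientation-preserving $\rho_2(g)$ would have to fix $\sup A$ or $\inf A$; the same holds for $B$, the dynamical realization having no global fixed point. Next I extend $f_0$ monotonically to the closure via $\overline{f_0}(x)=\sup\{f_0(a):a\in A,\ a\le x\}$ for $x\in\overline{A}$; this is non-decreasing, agrees with $f_0$ on $A$, maps $\overline{A}$ into $\overline{B}$, and by continuity of the homeomorphisms $\rho_i(g)$ it is $G$-equivariant on $\overline A$.

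It then remains to define $f$ on the complementary intervals (``gaps'') $J=(p,q)$ of $\overline A$, which $G$ permutes. When $\overline{f_0}(p)=\overline{f_0}(q)$ I set $f$ equal to this common value on $J$; such constant pieces are automatically compatible with equivariance. When $\overline{f_0}(p)<\overline{f_0}(q)$ the image is a genuine complementary interval of $\overline B$, and (as explained below) such a gap has both endpoints in $A$, hence trivial $G$-stabilizer: a nontrivial $\rho_2(\gamma)$ preserving $J$ would fix its endpoints and thus fix a point of $A$, forcing $\gamma=\id$. So on one such gap per $G$-orbit I define $f$ to be any increasing homeomorphism onto $(\overline{f_0}(p),\overline{f_0}(q))$, and spread it over the orbit by $f|_{\rho_2(g)(J)}:=\rho_1(g)\circ f|_J\circ\rho_2(g)^{-1}$, consistency being guaranteed by the triviality of the stabilizer. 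This produces a non-decreasing $f\colon\R\to\R$ with $f(0)=0$ satisfying $\rho_1(g)\circ f=f\circ\rho_2(g)$ everywhere.

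The main obstacle is \textbf{surjectivity}, and this is exactly where the special nature of the \emph{dynamical} realization $\rho_1$ must enter. Since $f$ is monotone and unbounded in both directions, being onto is equivalent to being continuous, i.e.\ to jumping over no value; the only values that could be skipped lie in complementary intervals of $\overline B$. Thus I must show every complementary interval of $\overline B$ is the image under $f$ of a gap of $\overline A$. The key structural fact is that the complementary intervals of $\overline B=\overline{t_1(G)}$ are \emph{precisely} the ``jumps'' of the order, namely intervals $(t_1(g_0),t_1(g_0'))$ with $g_0<g_0'$ consecutive in $(G,<)$, and that there are no extra gaps at cuts approached from within $G$. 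This is where I would use that $\rho_1$ comes from the inductive \emph{midpoint} construction of the dynamical realization on $\R$: along any cut approached by elements of $G$, the nested intervals into which successive points are inserted are halved, so their lengths shrink to $0$ and the orbit accumulates to a single point, leaving a gap only at an honest jump. Granting this, a jump $g_0<g_0'$ also produces a gap $(t_2(g_0),t_2(g_0'))$ of $\overline A$ with endpoints in $A$ (justifying the claim used above), and by construction $f$ carries it homeomorphically onto $(t_1(g_0),t_1(g_0'))$; combined with $\overline{f_0}(\overline A)=\overline B$, this shows $f$ is onto. I regard this shrinking property of the midpoint construction as the crux of the argument.
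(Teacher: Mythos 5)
Your proposal is correct and follows essentially the same route as the paper: the paper cites Navas for this lemma but proves its circular analogue (Proposition \ref{semiconj prop}) by exactly your strategy --- extend the orbit bijection to the orbit closure by monotone limits, then fill in the complementary gaps, with the midpoint feature of Construction \ref{embedding def} supplying the crucial fact that the realization's orbit closure has gaps only at jumps of the order. The one substantive difference is your gap-filling step: you transport a single choice of homeomorphism equivariantly over each $G$-orbit of jump gaps, which is in fact a more careful treatment than the paper's, since the affine extension used in Proposition \ref{semiconj prop} intertwines the two actions on a gap only when $\rho_2$ happens to act affinely there, whereas your orbit-by-orbit definition gives exact equivariance (the triviality of the stabilizer of the gap endpoints, which you verify, is what makes it consistent).
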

The assumption that $f(0) = 0$ is omitted from the statement in \cite{Navas orders}, but it is necessary and used in the proof.  

For circular orders, we replace the non-decreasing, surjective map $f$ above with a continuous \emph{degree 1 monotone map} of $S^1$. This is defined as follows:  A continuous map $f: S^1 \to S^1$ is \emph{degree 1 monotone} if it is surjective and \emph{weakly order-preserving}, meaning that for all triples $x, y, z$, we have $\ord(f(x),f(y),f(z)) = \ord(x, y, z)$ whenever $f(x), f(y)$ and $f(z)$ are distinct points. 
One can check that this condition is equivalent to the existence of a non-decreasing, surjective map $\tilde{f}: \R \to \R$ that commutes with the translation $x \mapsto x+1$, and descends to the map $f$ on the quotient $\R/\Z = S^1 \to S^1$.   

\begin{proposition} \label{semiconj prop}
Let $c_1$ be a circular order on $G$, and $\rho_1$ its dynamical realization with basepoint $x_0$. Let $\rho_2 \in \Hom(G, \Homeo_+(S^1))$ be such that $x_0$ has trivial stabilizer.  The circular order induced by the $\rho_2$ orbit of $x_0$ agrees with $c_1$ if and only if 
there is a continuous, degree 1 monotone map $f: S^1 \to S^1$ such that $f(x_0) = x_0$ and $\rho_1(g) \circ f = f \circ \rho_2(g)$ for all $g \in G$.  
\end{proposition}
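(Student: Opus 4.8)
The plan is to prove the two directions separately, following the template of Navas's Lemma \ref{navas lemma} but working on $S^1$ rather than $\R$. The key observation is that both $\rho_1$ and $\rho_2$ embed $G$ into $S^1$ via the orbit of $x_0$, and the hypothesis is precisely that these two embeddings induce the same circular order. So the content is to convert ``same induced circular order'' into the existence of an equivariant degree $1$ monotone map $f$, and conversely.

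For the hard direction (same order $\Rightarrow$ existence of $f$), I would define $f$ first on the orbit of $x_0$, and then extend. Write $\iota_1(g) = \rho_1(g)(x_0)$ and $\iota_2(g) = \rho_2(g)(x_0)$ for the two orbit maps. Since both $\rho_1$ and $\rho_2$ induce the circular order $c_1$ via the orbit of $x_0$, the map $\iota_2(g) \mapsto \iota_1(g)$ is well-defined (injectivity on both sides comes from triviality of the stabilizer) and weakly order-preserving in the sense that $\ord(\iota_2(g), \iota_2(h), \iota_2(k)) = c_1(g,h,k) = \ord(\iota_1(g), \iota_1(h), \iota_1(k))$. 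The goal is to extend this cyclic-order-preserving bijection $\iota_2(G) \to \iota_1(G)$ to a continuous degree $1$ monotone self-map $f$ of $S^1$. To do this cleanly, I would pass to the universal cover: lift the two embeddings to order-preserving maps $\tilde{\iota}_1, \tilde{\iota}_2 : \tilde{G} \to \R$ (or directly to $\Z$-periodic subsets of $\R$), define $\tilde{f}$ on $\tilde{\iota}_2(G)$ by $\tilde{\iota}_2(g) \mapsto \tilde{\iota}_1(g)$, check it is non-decreasing and commutes with the $\Z$-translation, then extend it to all of $\R$ by taking suprema (filling in the gaps monotonically and collapsing gaps as needed). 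This gives a non-decreasing surjective $\tilde{f}$ commuting with $x \mapsto x+1$, hence descends to the desired degree $1$ monotone $f$ with $f(x_0) = x_0$. Equivariance $\rho_1(g) \circ f = f \circ \rho_2(g)$ is then verified on the dense set $\iota_2(G)$ — where it follows from $f(\iota_2(g)) = \iota_1(g)$ together with the definition of the dynamical realization $\rho_1$ as acting affinely on complementary gaps — and extended to all of $S^1$ by continuity.

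For the converse (existence of $f$ $\Rightarrow$ same order), I would argue directly: given the equivariant degree $1$ monotone $f$ with $f(x_0) = x_0$, we get $f(\rho_2(g)(x_0)) = \rho_1(g)(f(x_0)) = \rho_1(g)(x_0)$ for all $g$, so $f$ carries the $\rho_2$-orbit of $x_0$ bijectively onto the $\rho_1$-orbit. Since $f$ is weakly order-preserving and these orbit points are distinct (so $f$ is injective on them), applying $\ord$ shows the induced cyclic orders coincide, i.e. the order induced by the $\rho_2$-orbit equals $c_1$.

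The main obstacle is the extension step in the forward direction. The subtlety is that $f$ is only required to be \emph{degree $1$ monotone}, not a homeomorphism, so I do not need an order isomorphism of the orbits' closures but rather a weakly monotone surjection; this is actually what makes the extension possible even when the two orbit closures are not homeomorphic. The careful point is handling the complementary gaps consistently: intervals in the complement of $\overline{\iota_2(G)}$ must be mapped into $S^1$ compatibly with the affine gap-structure of $\rho_1$'s dynamical realization, and one must confirm $\tilde f$ remains well-defined (single-valued) and non-decreasing where orbit points accumulate. Verifying equivariance on these gaps — rather than just on the orbit — is where one uses that $\rho_1$ was constructed to act affinely on the gaps of $\overline{\iota_1(G)}$, so that $\tilde f$ can be taken affine on gaps and the relation $\rho_1(g) \circ f = f \circ \rho_2(g)$ propagates from the orbit to its complement by continuity and density.
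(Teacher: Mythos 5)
Your proposal follows the same skeleton as the paper's proof: the easy direction is the identical computation, and for the hard direction both you and the paper define $f$ on the orbit by $\iota_2(g) \mapsto \iota_1(g)$ and then extend monotonically. However, there is a genuine gap at the crux of the extension step. You assert that the sup-extension $\tilde f$ is surjective (equivalently, continuous after descending to $S^1$), but this is exactly the point that requires proof. Concretely: if a point $x \in \overline{\iota_2(G)}$ is accumulated by the orbit from \emph{both sides}, say $\iota_2(g_i) \nearrow x$ and $\iota_2(h_j) \searrow x$, nothing in your argument prevents $\lim_i \iota_1(g_i) \neq \lim_j \iota_1(h_j)$; if these limits differ, then $\tilde f$ jumps at $x$, and in fact \emph{no} continuous map with $f \circ \iota_2 = \iota_1$ can exist (this jump is not fixable by ``filling gaps,'' since there is no complementary interval of $\overline{\iota_2(G)}$ at $x$ to fill). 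The paper's central paragraph is devoted precisely to excluding this: the midpoint placements of Construction \ref{embedding def} force the interleaved images $\iota_1(g_i)$, $\iota_1(h_j)$ to converge to a common point. The omission is fatal rather than cosmetic, because the only property of the dynamical realization your sketch ever invokes is that it acts affinely on the gaps of $\overline{\iota_1(G)}$, and that property does not characterize dynamical realizations. For instance, a Denjoy-type blow-up of $\rho_1$, in which a two-sided accumulation point of the orbit is replaced by an interval fixed pointwise, still permutes the gaps of its orbit closure affinely, still gives $x_0$ trivial stabilizer, and still induces $c_1$; yet the proposition is false for it, since there the two one-sided limits genuinely differ. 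Your argument would apply verbatim to that action, so it cannot be complete.

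A second, related problem is your justification of equivariance on the complementary intervals. You claim the relation $\rho_1(g) \circ f = f \circ \rho_2(g)$, once checked on $\iota_2(G)$, ``propagates to the complement by continuity and density.'' But $\iota_2(G)$ is dense only in its own closure, not in $S^1$, and the complementary gaps are open sets disjoint from $\overline{\iota_2(G)}$; continuity gives no information there. Equivariance on a gap must be arranged when $f$ is defined on it --- the paper does this by sending each gap $(a,b)$ of $\overline{\iota_2(G)}$ onto the corresponding interval $(f(a),f(b))$ complementary to $\overline{\iota_1(G)}$. Note, moreover, that ``take $\tilde f$ affine on gaps'' is not by itself enough: on a gap whose image interval is nondegenerate, $\rho_1(g) \circ f$ is affine, while $f \circ \rho_2(g)$ is an affine map precomposed with the arbitrary homeomorphism $\rho_2(g)$, so the two agree automatically only on gaps that $f$ collapses to points. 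A careful argument chooses $f$ on one gap in each $\rho_2$-orbit of gaps and transports it by the group action, using (again via Construction \ref{embedding def}) that such a gap has an endpoint in the orbit, hence trivial setwise stabilizer, so the transport is consistent.
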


The degree 1 monotone map $f$ is an example of a \emph{semi-conjugacy} of $S^1$.  Proposition \ref{semiconj prop} says that the elements of $H(x_0)$ corresponding to the same circular order all differ by such a semi-conjugacy.   However, it is important to note that the relationship between $\rho_1$ and $\rho_2$ given in the proposition is not symmetric.   Loosely speaking, $\rho_1$ (the dynamical realization) can be obtained from $\rho_2$ by collapsing some intervals to points, but not vice-versa.  
This gives a characterization of dynamical realizations as the ``most minimal" (i.e. those with ``densest orbits") actions among all actions with a given cyclic structure on an orbit.

\begin{proof}[Proof of Proposition \ref{semiconj prop}]
Let $\rho_1$ be a dynamical realization of $c$ with basepoint $x_0$, and let $\rho_2$ be an action of $G$ on $S^1$. 
Suppose first that $f$ is a degree 1 monotone map fixing $x_0$, and such that $\rho_1(g) \circ f = f \circ \rho_2(g)$ for all $g \in G$.  
Then the cyclic order of the orbits of $x_0$ under $\rho_1(G)$ and $\rho_2(G)$ agree.  (Here we do not even need $f$ to be continuous.)

For the converse, suppose that $\rho_2$ defines the same circular order as $\rho_1$.  Define a map $f: \rho_2(G)(x_0) \to \rho_1(G)(x_0)$ by 
$\rho_2(g)(x_0) \mapsto \rho_1(g)(x_0)$.   Following the strategy of \cite[Lemma 2.8]{Navas orders}, we show that $f$ first extends continuously to the closure of the orbit $\rho_2(G)(x_0)$, and can then be further extended to a continuous degree 1 monotone map.    Suppose that $x$ is in the closure of the orbit of $x_0$ under $\rho_2(G)$.  Then there exist $g_i$ in $G$ such that $\rho_2(g_i)(x_0) \to x$; moreover we can choose these such that, for each $i$, the triples $\rho_2(g_1)(x_0), \rho_2(g_{i})(x_0), \rho_2(g_{i+1})(x_0)$ all have the same (positive or negative) orientation.  Assume for concreteness that these triples are positively oriented.  

Since the orbit of $x_0$ under $\rho_1$ has the same cyclic order as that of $\rho_2$, the triples $\rho_1(g_1)(x_0), \rho_1(g_{i})(x_0), \rho_1(g_{i+1})(x_0)$ are also all positively oriented, so the sequence $\rho_1(g_i)(x_0)$ is \emph{monotone} (increasing) in the closed interval obtained by cutting $S^1$ at $\rho_1(g_1)(x_0)$.  Thus the sequence $\rho_1(g_i)(x_0)$ converges to some point, say $y$.  Define $f(x) = y$.  This is well defined, since if $\rho_2(h_i)(x_0)$ is another sequence converging to $x$ from the same side, we can find subsequences $g_k$ and $h_k$ such that the triples $\rho_1(g_1)(x_0), \rho_1(h_{k})(x_0), \rho_1(g_{k+1})(x_0)$ are positively oriented, so the monotone sequences $\rho_1(g_k)(x_0)$ and $\rho_1(g_k)(x_0)$ converge to the same point.   
If instead $\rho_2(h_i)(x_0)$ approaches from the opposite side, i.e. the triples $\rho_2(h_1)(x_0), \rho_2(h_{i})(x_0), \rho_2(h_{i+1})(x_0)$ are negatively oriented, then the midpoint construction from the definition of dynamical realization implies that the distance between $\rho_1(g_i)(x_0)$ and $\rho_1(h_i)(x_0)$ approaches zero, so they converge to the same point.   With this definition, the function $f$ is now weakly order preserving on triples, whenever it is defined.  

If $\rho_2(G)(x_0)$ is dense in $S^1$, this completes the definition of $f$.  If not, for each interval $I$ that is a connected component of the complement of the closure of $\rho_2(G)(x_0)$, extend $f$ over $I$ by defining it to be the unique affine map from $I = (a,b)$ to the interval $(f(a), f(b))$ in the complement of the closure of $\rho_1(G)(x_0)$.  This gives a well defined continuous extension that preserves the relation $\rho_1(g) \circ  f = f  \circ \rho_2(g)$ and preserves the weak order preserving property of $f$ on $\rho_2(G)(x_0)$.  
\end{proof}

We conclude these preliminaries by showing the necessity of fixing the basepoint in Propositions \ref{orbit map prop} and \ref{semiconj prop}.   
Note that, if $c$ is an order induced from an action of $G$ on $S^1$ with basepoint $x_0$, then the order induced from the basepoint $g(x_0)$ -- which also has trivial stabilizer -- is precisely the conjugate order $c_g$ from Definition \ref{conj def}.    However, one may also change basepoint to a point outside the orbit of $x_0$.  The following proposition gives a general description of orders under change of basepoint, it will be used in the next section.    We use the notation $\stab(x)$ to denote the stabilizer of a point $x$.  

\begin{proposition}[Change of basepoint]  \label{basepoint prop}
Let $G \subset \Homeo_+(S^1)$ be a countable group.  Let $x \in S^1$, and let $\{x_i\}$ be a sequence of points approaching $x$ such that $\stab(x_i)= \{\id\}$ for all $i$.  
\begin{enumerate}[nosep, label=\roman*)]
\item If $x$ has trivial stabilizer, then the circular orders from basepoints $x_i$ approach the order from basepoint $x$.   In particular, if $x_i$ are in the orbit of $x$ under $G$, then the order induced from the orbit of $x$ can be approximated by its conjugates.  
\item If an interval $I \subset S^1$ satisfies $\stab(x) = \{\id\}$ for all $x \in I$, then all choices of basepoint in $I$ give the same circular order.    
\end{enumerate} 
\end{proposition}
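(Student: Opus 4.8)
The plan is to argue directly from the definition of the induced order, $c_y(g_1,g_2,g_3) = \ord(g_1(y), g_2(y), g_3(y))$, using two elementary facts: that the product topology on $\CO(G)$ is controlled finite-triple-by-finite-triple, and that $\ord$ is locally constant on the complement of the degenerate locus $\triangle(S^1) \subset (S^1)^3$. The one structural observation I will use throughout is that the trivial-stabilizer hypothesis converts distinctness of \emph{group elements} into distinctness of \emph{orbit points}: if $\stab(z) = \{\id\}$ and $g, g' \in G$, then $g(z) = g'(z)$ forces $(g')^{-1}g \in \stab(z)$, hence $g = g'$. Consequently, for such a $z$ and any pairwise distinct $g_1, g_2, g_3$, the triple $(g_1(z), g_2(z), g_3(z))$ is non-degenerate and $\ord$ takes value $\pm 1$ on it.

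For part (i), I would fix a finite set $S \subset G$; by the neighborhood basis of $\CO(G)$ it suffices to produce $N$ so that $c_{x_i}$ and $c_x$ agree on every triple drawn from $S$ once $i \geq N$. Triples $(g_1,g_2,g_3)$ from $S$ with two entries equal as group elements need no attention, since both orders vanish there (trivial stabilizer makes the orbit map injective at $x$ and at each $x_i$). For a triple of distinct group elements, $(g_1(x), g_2(x), g_3(x))$ is non-degenerate because $\stab(x)=\{\id\}$, so it lies in the open set where $\ord$ is constant and equal to $c_x(g_1,g_2,g_3)$. Since each $g_j$ is a homeomorphism, $g_j(x_i) \to g_j(x)$, so for $i$ large the perturbed triple stays in that open set and $c_{x_i}(g_1,g_2,g_3) = c_x(g_1,g_2,g_3)$. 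Taking the largest of the finitely many resulting thresholds gives $N$, proving $c_{x_i} \to c_x$. The ``in particular'' clause then falls out: when $x_i = g_i(x)$, each $x_i$ again has trivial stabilizer (stabilizers along an orbit are conjugate), and the remark preceding the proposition identifies $c_{x_i}$ with the conjugate $c_{g_i}$, so $c_x$ is a limit of its conjugates.

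For part (ii), I would fix arbitrary $g_1, g_2, g_3 \in G$ and study the continuous map $z \mapsto (g_1(z), g_2(z), g_3(z))$ from $I$ to $(S^1)^3$. If two of the $g_j$ coincide as group elements, then $c_y(g_1,g_2,g_3)=0$ for every $y \in I$ and there is nothing to check. If the $g_j$ are pairwise distinct, the hypothesis that \emph{every} point of $I$ has trivial stabilizer forces the image of this map to miss $\triangle(S^1)$, so composing with $\ord$ yields a continuous $\{-1,+1\}$-valued function on $I$. Since $I$ is connected, this function is constant, i.e.\ $\ord(g_1(y), g_2(y), g_3(y))$ is independent of $y \in I$. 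As the $g_j$ were arbitrary, all basepoints in $I$ induce the same circular order.

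The main obstacle here is not depth but bookkeeping: both parts reduce to the fact that $\ord$ is locally constant off $\triangle(S^1)$, with the trivial-stabilizer hypothesis supplying exactly the non-degeneracy needed at the relevant orbit points. The only genuine care is in (a) cleanly separating the degenerate triples (where both orders automatically give $0$) from the non-degenerate ones, and (b) in part (i), selecting a single $N$ valid for all triples from $S$ at once, which is harmless since $S$ is finite.
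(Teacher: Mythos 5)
Your proof is correct and follows essentially the same route as the paper's: part (i) is the paper's stability-of-finite-configurations argument, made explicit via local constancy of $\ord$ off the degenerate locus, and part (ii) rests on the same connectedness principle. The only cosmetic difference is that in (ii) the paper applies connectedness to the basepoint map $I \to \CO(G)$ and cites total disconnectedness of $\CO(G)$, whereas you argue triple-by-triple with $\{\pm 1\}$-valued continuous functions, which inlines that same fact without needing to invoke it.
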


\begin{proof}
i)  Assume $x$ has trivial stabilizer.  Let $S \subset G$ be any finite set.  If $x_j$ is sufficiently close to $x$, then the finite set $\{ g(x_j) \mid g \in S\}$ will have the same circular order as that of $\{ g(x) \mid g \in S\}$.   

ii) Assume now that $\stab(x) = \{\id\}$ for all $x$ in a connected interval $I$.  By part i), under this assumption the map $I \to \CO(G)$ given by sending a point $x$ to the circular order induced from $x$ is continuous.  Since $I$ is connected and $\CO(G)$ totally disconnected, this map is constant.  
\end{proof} 

\begin{remark} \label{basepoint rk}
Note that if $x$ has nontrivial stabilizer, then the set of accumulation points of circular orders induced from such a sequence of basepoints need not be a singleton.  For an easy example, consider an action of $\Z$ on the circle such that some point $x$ is a repelling fixed point of the generator $f$ of $\Z$.  Then for any points $y$ and $z$ close to $x$ and separated by $x$, we will have a positive cyclic order on one and only one of the triples $(y, f(y), f^2(y))$ and $(z, f(z), f^2(z))$.   

Although such an action will not arise as the dynamical realization of any order on $\Z$, this behavior does occur for $\Z$-subgroups of the dynamical realization of orders on many groups, including $F_2$.  
\end{remark}

\subsection{Proof of Theorem \ref{characterization thm} }
We now prove the characterization theorem that was stated in the introduction:  

\vspace{0,3cm}

\noindent \textbf{Theorem \ref{characterization thm}.}
Let $G$ be a countable group.  A circular order on $G$ is isolated if and only if its dynamical realization $\rho$ is \emph{rigid} in the following strong sense:  for every action $\rho'$ sufficiently close to $\rho$ in $\Hom(G, \Homeo_+(S^1))$ there exists a continuous,  degree 1 monotone map $h: S^1 \to S^1$ fixing the basepoint $x_0$ and such that $\rho(g) \circ h  = h\circ \rho'(g)$ for all $g \in G$.  
\bigskip

In particular, this implies that the dynamical realization of an isolated circular order is rigid in the more standard sense that it has a neighborhood in $\Hom(G, \Homeo_+(S^1))$ consisting of a single semi-conjugacy class (see e.g. Definition 3.5 in \cite{Mann survey}).  We remark, however, that this weaker form of rigidity does not entail that the ordering is isolated. For instance, this weak rigidity holds for some non-isolated circular orders on fundamental groups of hyperbolic closed surfaces and also on (some) solvable groups, see Example \ref{ex surfaces} and Example \ref{ex solvable} below.

By minor modifications of the proof, we obtain the same result for left orders. 

\begin{theorem} \label{linear version}
Let $G$ be a countable group.  A left order on $G$ is isolated if and only if its dynamical realization $\rho$ is \emph{rigid} in the following strong sense:  for every action $\rho'$ sufficiently close to $\rho$ in $\Hom(G, \Homeo_+(\R))$ there exists a continuous, surjective monotone map $h: \R \to \R$ fixing the basepoint $x_0$ and such that $\rho(g) \circ h  = h\circ \rho'(g)$ for all $g \in G$. 
\end{theorem}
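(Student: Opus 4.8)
The plan is to prove Theorem \ref{linear version} by running the argument for Theorem \ref{characterization thm} essentially verbatim, after translating every ingredient into its linear counterpart. Concretely, I would replace circular orders by left orders, $S^1$ by $\R$, the order cocycle $\ord$ by the sign-of-order function defining $c_<$, continuous degree $1$ monotone maps by non-decreasing surjective maps $\R \to \R$, and---most importantly---Proposition \ref{semiconj prop} by its linear predecessor, Navas's Lemma \ref{navas lemma}. The two auxiliary continuity statements transfer for free: the orbit map $o\colon H(0) \to \LO(G)$ is continuous and surjective by the proof of Proposition \ref{orbit map prop}, the realization map is continuous from $\LO(G)$ to $\Hom(G,\Homeo_+(\R))$ modulo conjugacy by the remark following Proposition \ref{continuity prop}, and the linear analogue of Proposition \ref{cont prop2} guarantees that left orders near $<$ admit dynamical realizations with basepoint $0$ near $\rho$. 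Throughout I take the basepoint to be $0 \in \R$.

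I would dispatch the implication ``rigid $\Rightarrow$ isolated'' first, as it is the robust direction. Arguing by contraposition, suppose $<$ is not isolated, so there are left orders $<_n \to \;<$ with $<_n \neq \;<$. By the linear analogue of Proposition \ref{cont prop2} I may choose dynamical realizations $\rho_n$ of $<_n$, each with basepoint $0$, converging to $\rho$ in $\Hom(G,\Homeo_+(\R))$. Applying the assumed rigidity to $\rho' = \rho_n$ for $n$ large produces non-decreasing surjective maps $h_n$ fixing $0$ and intertwining $\rho$ and $\rho_n$. Evaluating the intertwining relation along the orbit of $0$ shows that $h_n$ carries the orbit $\{\rho(g)(0)\}$ to the orbit $\{\rho_n(g)(0)\}$ respecting the labelling by $g \in G$; since $0$ has trivial stabilizer in any dynamical realization, these orbit points are distinct, and the fact that $h_n$ is non-decreasing then forces the two orbits to have identical order type, whence $<_n = \;<$, a contradiction. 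I emphasize that this step uses only monotonicity of $h_n$ and distinctness of orbit points, so it is insensitive to the precise direction in which the intertwining map runs.

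For ``isolated $\Rightarrow$ rigid'', fix an isolated left order $<$ with realization $\rho$ and basepoint $0$, and let $\rho'$ be close to $\rho$. The idea is to read a left order off the $\rho'$-orbit of $0$, show it must equal $<$, and then invoke Navas's Lemma \ref{navas lemma}. Since $0$ has trivial stabilizer under $\rho$, some element moves $0$ by a definite amount, so for $\rho'$ close enough the same element still moves $0$; after adjusting the basepoint within a fixed-point--free interval via Proposition \ref{basepoint prop} if necessary, this lets me ensure that $\rho'$ genuinely induces a left order $<'$ from the orbit of $0$. Continuity of the orbit map then places $<'$ in any prescribed neighborhood of $<$ once $\rho'$ is close enough, and isolation of $<$ forces $<'$ to equal $<$. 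Lemma \ref{navas lemma} now applies and yields the required non-decreasing surjective map fixing $0$ and intertwining $\rho$ with $\rho'$, which is exactly the strong rigidity asserted.

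The main obstacle lies entirely in this second implication and is dynamical rather than combinatorial: a nearby action $\rho'$ need not have $0$ as a point of trivial stabilizer, and---because $\R$ is noncompact---it may even acquire a global fixed point, in which case the hypotheses of Lemma \ref{navas lemma} fail and the ``induced order'' degenerates. Controlling this requires the observation that the realization $\rho$ of a left order on an infinite group has no global fixed point and has $\rho(G)(0)$ cofinal in both directions of $\R$, properties that persist under small perturbation on any fixed finite set; combined with the change-of-basepoint Proposition \ref{basepoint prop}, this lets me replace $0$ by a nearby trivial-stabilizer point inducing the same order and so restore the hypotheses of Navas's Lemma. A secondary point to verify is that the map produced is genuinely \emph{surjective} onto $\R$, which again follows from cofinality of the orbit of $0$. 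Once these dynamical subtleties are handled, the combinatorial core of the argument is identical to the circular case.
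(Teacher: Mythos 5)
Your ``rigid $\Rightarrow$ isolated'' direction is correct and is essentially the paper's argument: the linear version of Lemma \ref{lem continuity} produces realizations of nearby orders close to $\rho$, and then monotonicity of the intertwining map along a trivial-stabilizer orbit forces the orders to coincide (you prove inline what the paper gets by citing the easy implication of Lemma \ref{navas lemma}). The problems are in the converse direction, and they sit exactly at the point you yourself call the main obstacle.

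First, the nontrivial-stabilizer case. Your plan is to replace $0$ by a nearby trivial-stabilizer point via Proposition \ref{basepoint prop}. This step fails: closeness of $\rho'$ to $\rho$ constrains only finitely many group elements, so a nearby $\rho'$ can have some long word acting as the identity on an entire interval around $0$; then \emph{every} point near $0$ has nontrivial stabilizer, and part (ii) of Proposition \ref{basepoint prop} (which needs a whole interval of trivial-stabilizer points) cannot be applied. Moreover, even when a good basepoint $y\neq 0$ does exist, Lemma \ref{navas lemma} applied at $y$ produces a map carrying $y$ to $0$, not a map fixing the basepoint $x_0=0$ as the theorem asserts. The paper's mechanism here is different, and it is the missing idea in your proposal: the order-completion Lemma \ref{completion lemma}. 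If $K=\stab_{\rho'}(x_0)$ is nontrivial, then $K$ is left-orderable (being a subgroup of $G$) and so carries at least two left orders; each extends the coset order induced by the $\rho'$-orbit of $x_0$ to a left order on $G$, and both extensions agree with $<$ on the finite set $F$ witnessing isolation. Two distinct such orders contradict isolation. Thus isolation \emph{itself} rules out nontrivial stabilizers --- no change of basepoint is needed, and none is available.

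Second, your persistence claim --- that absence of a global fixed point and two-sided cofinality of $\rho(G)(0)$ ``persist under small perturbation on any fixed finite set'' --- is false in the topology the paper uses (compact-open, equivalently the subspace topology from $\Hom(G,\Homeo_+(S^1))$). One can squash all of the dynamics of $\rho$ into a large interval $(-q,q)$ by conjugating with a homeomorphism $\theta_q\colon \R \to (-q,q)$ equal to the identity on $[-q+1,q-1]$ and extending by the identity outside $(-q,q)$; the resulting actions converge to $\rho$ as $q\to\infty$, yet each has global fixed points and a bounded orbit of $0$, so no \emph{surjective} monotone intertwiner with $\rho$ can exist in either direction. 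Cofinality of an orbit involves infinitely many group elements and simply cannot be certified by closeness on a finite set. You deserve credit for flagging this obstacle --- the paper's own proof quietly assumes the ``no global fixed point'' hypothesis when it invokes Lemma \ref{navas lemma}, so the issue is real for the statement as written --- but the persistence argument you lean on does not repair it.
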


We prove Theorem \ref{characterization thm} first, then give the modifications for the left order case.  The first step in the proof is a stronger version of the \emph{continuity of dynamical realization} given in Proposition \ref{continuity prop}.

\begin{lemma}[Continuity of dynamical realization, II] \label{lem continuity} 
Let $c \in \CO(G)$ and let $\rho: G \to \Homeo_+(S^1)$ be a dynamical realization of $c$ based at $x_0$. Let $U$ be any neighborhood of $\rho$ in $\Hom(G, \Homeo_+(S^1))$. Then, there exists a neighborhood $V$ of $c$ in $\CO(G)$ such that each order in $V$ has a dynamical realization based at $x_0$ contained in $U$.   
\end{lemma}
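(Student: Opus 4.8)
The plan is to strengthen the proof of Proposition \ref{continuity prop} so that the dynamical realization we produce lies near the \emph{given} realization $\rho$, and shares its basepoint $x_0$, rather than near a conjugate of $\rho$ standardized to have equally spaced orbit. It suffices to treat a basic neighborhood $U = O_{(F,\epsilon)}(\rho)$. I would fix a finite symmetric set $S$ with $F \subseteq S$, a larger finite set $T \supseteq S\cdot S$ closed enough under multiplication by $S$, and set $V = \{c' : c' = c \text{ on } T\}$. For $c' \in V$ I take $\rho'$ to be the dynamical realization of $c'$ built (Construction \ref{embedding def}) from the \emph{same} basepoint $x_0$ and an order embedding $\iota'$ agreeing with that of $\rho$ on $T$; this is possible precisely because $c'=c$ on $T$, and it forces $\rho'(g)(x_0) = \rho(g)(x_0)$ for $g \in T$. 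Insisting on the same basepoint $x_0$ is what makes this stronger than Proposition \ref{continuity prop}, and it is exactly what the intended application needs (so that the order induced by the $\rho'$--orbit of $x_0$ is again $c'$).

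For the distance estimate I would reuse the refinement-and-preimage argument from Proposition \ref{continuity prop}, but carried out directly in the orbit $\rho(S)(x_0)$ rather than in an equally spaced model. Write $K = \overline{\rho(G)(x_0)}$. Only finitely many complementary intervals (gaps) of $K$ have length $\ge \epsilon$; call these the \emph{long gaps}. For $T$ large enough, every component of $S^1 \setminus \rho(T)(x_0)$ that is not contained in a long gap has length $< \epsilon$, since orbit points are dense in $K$ and the remaining gaps are already short. Then for $s \in S$ and any $y$ such that neither $y$ nor $\rho(s)(y)$ lies in a long gap, the preimage argument (using that $\rho(s^{\pm 1})$ and $\rho'(s^{\pm 1})$ agree on $\rho(T)(x_0)$) places $\rho(s)(y)$ and $\rho'(s)(y)$ in a common subdivision interval of length $< \epsilon$, giving $d(\rho(s)(y), \rho'(s)(y)) < \epsilon$.

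The main obstacle is the remaining case, where $y$ lies in a long gap $J$ of $K$ whose image $\rho(s)(J)$ is again long. There $\rho$ acts affinely on $J$, but $c'$ may place orbit points combinatorially inside $J$ (agreement on the finite set $T$ cannot certify that the infinitely many conditions defining a gap persist), so that $\rho'(s)|_J$ need not be affine and could a priori move points across the whole of $\rho(s)(J)$. This is precisely the configuration that the equal-spacing conjugation in Proposition \ref{continuity prop} concealed by squashing every gap to a short component, and it is the genuine content of the strengthening. To handle it I would exploit the freedom in the realization: by Remark \ref{conj rem} different enumerations yield conjugate realizations, so I am free to choose one adapted to $\rho$. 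The long gaps, together with their finitely many $F$--images and $F$--preimages, form a finite, $F$--compatible family of intervals on which $\rho$ is affine; I would enumerate $G$ so that whenever an element of $c'$ is placed in one of these intervals, its entire $F$--orbit within the family is placed simultaneously, each at the midpoint dictated by the construction. Since an affine map carries midpoints to midpoints and subintervals to subintervals, this synchronized placement forces $\rho'$ to agree with $\rho$ on every interval of the family; in particular $\rho'(s)|_J = \rho(s)|_J$ there.

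Combining the two cases yields $\rho' \in O_{(S,\epsilon)}(\rho) \subseteq U$ for every $c' \in V$. I expect the synchronized placement on the long gaps to be the delicate point: one must verify that enumerating the $F$--orbits of gap points in step keeps the midpoint subdivisions of a gap and of its image in exact affine correspondence, so that the equivariance $\rho'(s) = \rho(s)$ on the family is preserved at every finite stage and hence in the limit. The left-order version (Theorem \ref{linear version}) would follow by the same argument, replacing $S^1$ by $\R$, degree $1$ monotone maps by surjective monotone maps, and gaps of $K$ by the analogous complementary intervals on the line.
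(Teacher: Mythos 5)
Your overall architecture is sound, and you correctly identify the real content of the strengthening (keeping the basepoint $x_0$ fixed, which is what the application in Theorem \ref{characterization thm} needs). Your mechanism is also genuinely different from the paper's: the paper first proves a dichotomy — for a dynamical realization, either the orbit of $x_0$ is dense, or every orbit point is isolated — and then, in the non-dense case, takes a realization $\rho'$ of $c'$ matching $\rho$ on the finite orbit $S\cdot S(x_0)$ and conjugates it by a correction homeomorphism supported on the pairwise disjoint translates $\rho(g)(I)$, $g \in S\cdot S$, of the wandering interval $I = (x_0,\rho(t)(x_0))$, so that the conjugate agrees with $\rho$ exactly on those intervals; no re-engineering of the enumeration is needed. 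However, your proposal has a genuine gap precisely where the paper invests the first half of its proof. Everything in your Case B — and even the identification of the long components of $S^1\setminus\rho(T)(x_0)$ with the long gaps of $K=\overline{\rho(G)(x_0)}$ — presupposes that the complementary intervals of $K$ (the long gaps and their $S$-preimages) have both endpoints \emph{in the orbit} $\rho(G)(x_0)$. Without this, ``the elements of $c'$ placed in one of these intervals'' is not determined by the finite data $c'|_T$, the synchronization has nothing to anchor to at the endpoints, the setwise identity $\rho'(s)(J)=\rho(s)(J)$ fails, and points lying in a long component of $S^1\setminus\rho(T)(x_0)$ but outside its long gap escape both of your cases. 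This endpoint property is \emph{false} for general free orbits (Denjoy-type actions give orbit closures whose gaps have non-orbit endpoints); it holds for dynamical realizations only because of the minimality property of Proposition \ref{semiconj prop}, via the collapsing argument (collapse the translates of an offending gap and contradict the existence of a continuous degree one semi-conjugacy onto $\rho$). Establishing exactly this structural fact is what the paper's dense/isolated dichotomy does, and you never invoke Proposition \ref{semiconj prop} or any substitute, so a load-bearing step is missing, not merely implicit.

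Two secondary points. First, your case split is not exhaustive as stated: it misses $y$ lying in a \emph{short} gap of $K$ whose $\rho(s)$-image is a long gap (this is evidently why your family contains $F$-preimages, but then neither Case A's hypothesis nor Case B's covers such $y$), and $y$ in a long gap whose image is short (easy: one only needs $\rho'(s)(J)=\rho(s)(J)$ setwise, granted orbit endpoints in $T$). Second, granting the endpoint property, your synchronization can in fact be made to work: sync classes are finite because the family is finite and gap stabilizers are trivial; left-invariance of $c'$ gives the order correspondence between elements placed in $J$ and in $\rho(s)(J)$; and affineness of $\rho$ on gaps of $K$ sends midpoints to midpoints, so synchronized placements still obey the midpoint rule of Construction \ref{embedding def}. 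Note, though, that this uses that $\rho$ is literally a constructed realization (affine on gaps), so an arbitrary conjugate must first be reduced to that case; and these verifications, which you yourself flag as ``the delicate point,'' constitute most of the remaining work — work the paper's correction-conjugacy device avoids entirely.
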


\begin{proof} We run a modification of the original argument in Proposition \ref{continuity prop}, which proved a weaker result about the \emph{conjugacy} class of $\rho$. Fix an enumeration $\nu:\N\to G$ such that $\nu(0)=\id$, and let $\rho:G\to \Homeo_+(S^1)$ be the dynamical realization of $c$ build with it. Let $U$ be a neighborhood of $\rho$ in $\Hom(G, \Homeo_+(S^1))$.  We may assume that $U = O_{(F, \epsilon)}(\rho)$ for some $\epsilon$. Suppose first, that $\rho(G)(x_0)$ is dense in $S^1$. Take $S\subset G$ a finite set such that 
\begin{equation} \label{eq dense}\; \rho(g)\rho(S)(x_0)\text{ is }\epsilon/2\text{-dense in $S^1$ for any $g\in F$,}\end{equation} and let $n$ be such that $T=\nu(\{0,\ldots,n\})$ contains $F\cdot S$.  In this way, if $c'$  is a circular order of $G$ coinciding with $c$ over $T$, we can build a dynamical realization $\rho'$ of $c'$ using enumeration $\nu$ that has the same base point as $\rho$, and moreover, has $\rho'(g)=\rho(g)$ on the set $\rho(S)(x_0)$. Further, by (\ref{eq dense}) above, $\rho'$ belongs to  the $O_{(F, \epsilon)}$-neighborhood of $\rho$.

In the case where $\rho(G)(x_0)$ is not dense, we claim that $x_0$ (and hence all points in the orbit of $x_0$) are isolated points.  To see this, suppose that $x_0$ were not isolated.  Then every point of $\rho(G)(x_0)$ would be an accumulation point of the orbit $\rho(G)(x_0)$, and hence the closure of the orbit is a Cantor set.   In this case the ``minimality" of dynamical realizations given by Proposition \ref{semiconj prop}  
implies that $\rho(G)(x_0)$ is in fact dense.  More concretely, if there is some interval $I$ in the complement of the closure of $\rho(G)x_0$, then collapsing each interval $\rho(g)(I)$ to a point produces a new action of $G$ on $S^1$ that is not semi-conjugate to $\rho$ by any degree one \emph{continuous} map $f$, contradicting Proposition \ref{semiconj prop}.   (A more detailed version of this kind of argument is given in Lemma \ref{min set condition} and Corollary \ref{min set 2}.)

Given that $x_0$ is isolated, let $t \in G$ be such that the oriented interval $I:=(x_0, \rho(t)(x_0))$ contains no other points from $\rho(G)(x_0)$.  
Note that, for each pair of distinct elements $g, h \in G$, we have $\rho(g)(I) \cap \rho(h)(I) = \emptyset$.  This is because  
$$ \rho(g)(I) \cap \rho(h)(I) \neq \emptyset \, \,  \Leftrightarrow  I \cap \rho(g^{-1}h)(I) \neq \emptyset $$
and since by definition $I$ contains no points in the orbit of $x_0$, we must have $\rho(g^{-1}h)(I) \supset I$, hence  $\rho(g^{-1}h)^{-1}(I) \subset I$, contradicting the definition of $I$. 

We use this observation to modify the construction from the proof of Proposition \ref{orbit map prop} as follows.  
Given $\epsilon$ and a finite set $F \subset G$, let $S \subset G$ be a finite, symmetric set containing $F\cup\{\id\}$ and such that each interval $J$ in the complement of the set 
$$\bigcup \limits_{g \in S \cdot S} g(I)$$
has length less than $\epsilon$.  
Let $c'$ be a circular order that agrees with $c$ on $S \cdot S$. Then, as in the case where $\rho(G)(x_0)$ is dense, we may build $\rho'$ a dynamical realization of $c'$ that agrees with $c$ on $S \cdot S(x_0)$.   In particular, this means that $\rho(g)(I) = \rho'(g)(I)$ for all $g \in S \cdot S$ and that these intervals are pairwise disjoint.  

For each $g \in S \cdot S$, let $h_g$ be the restriction of $\rho'(g) \rho(g)^{-1}$ to $\rho(g)(I)$.  Note that this is a homeomorphism of $\rho(g)(I)$ fixing each endpoint.   Let $h: S^1 \to S^1$ be the homeomorphism defined by 
$$
h(x) = \left\{\begin{array}{cl} h_g(x) & \text{ if } x \in \rho(g)(I) \text{ for some } g \in S \cdot S \\
x &  \text{otherwise}  
\end{array} \right. 
$$
Then $h \rho' h^{-1}$ is also a dynamical realization of $c'$ (since it is the conjugate of a dynamical realization by a homeomorphismn of $S^1$). We now show that $h \rho' h^{-1}$ is in $O_{(S, \epsilon)}(\rho)$.

Let $s \in S$.   By construction, $\rho(s)$ and $h \rho'(s) h^{-1}$ agree on $S(x_0)$.  Moreover, if $x$ lies in some interval $\rho(t)(I)$ where $t \in S$, we have 
$$h \rho'(s) h^{-1}(x) = 
\rho(st) \rho'(st)^{-1} \rho'(s) \rho'(t) \rho(t)^{-1}(x) = \rho(s)(x)$$ 
so again $\rho(s)$ and $h \rho'(s) h^{-1}$ agree here.  
Finally, if $x$ is not in any such interval, then $\rho(s)(x)$, (and hence $h \rho'(s) h^{-1}(x)$ also) lies in the complement of the set
$\bigcup \limits_{g \in S \cdot S} g(I)$.  Since both images $\rho(s)(x)$ and $h \rho'(s) h^{-1}(x)$ lie in the same complementary interval, which by construction has length less than $\epsilon$, they differ by a distance less than $\epsilon$.  
\end{proof}

We are now in position to finish the proof of the Theorem.  

\begin{proof}[End of proof of Theorem \ref{characterization thm}]
Let $c \in \CO(G)$ be isolated, and let $\rho$ be its dynamical realization with basepoint $x_0$.  Since $c$ is isolated, there exists a finite set $F \subset G$ such that any order that agrees with $c$ on $F$ is equal to $c$.  Because the orbit of $x_0$ under $\rho(F)(x_0)$ is a finite set, there exists a neighborhood $U$ of $\rho$ in $\Hom(G, \Homeo_+(S^1))$ such that, for any $\rho' \in U$, the cyclic order of the set $\rho'(F)(x_0)$ agrees with that of $\rho(F)(x_0)$.   Let $\rho' \in U$.  If $x_0$ has trivial stabilizer under $\rho'$, then the cyclic order on $G$ induced by the orbit of $x_0$ under $\rho'(G)$ agrees with $c$ on $F$, so is equal to $c$. By Proposition \ref{semiconj prop}, this gives the existence of a map $h$ as in the theorem.  

If $x_0$ instead has non-trivial stabilizer, say $K \subset G$, then the orbit of $x_0$ under $\rho'(G)$ gives a circular order on the set of cosets of $K$.  Lemma \ref{completion lemma} below implies that this can be ``extended" or \emph{completed} to an order on $G$ in at least two different ways, both of which agree with $c$ on $F$. In particular, one of the order completions is not equal to $c$.  This shows that $c$ is not an isolated point, contradicting our initial assumption.  This completes the forward direction of the proof.  

For the converse, assume that $c$ is a circular order whose dynamical realization $\rho$ satisfies the rigidity condition in the theorem. Let $U$ be a neighborhood of $\rho$ so that each $\rho'$ in $U$ is semi-conjugate to $\rho$ by a continuous degree one monotone map $h$ as in the statement of the theorem.   Lemma \ref{lem continuity} provides a neighborhood $V$ of $c$ such that any $c'\in V$ has a dynamical realization in $U$.   Proposition \ref{semiconj prop} now implies that the neighborhood $V$ consists of a single circular order, so $c$ is an isolated point.  
\end{proof}

\begin{lemma}[Order completions, see Theorem 2.2.14 in \cite{Calegari}] \label{completion lemma}
Let $G \subset \Homeo(S^1)$ and let $x$ be a point with stabilizer $K \subset G$.   Any left order on $K$ can be extended to circular order on $G$ that agrees with the circular order on cosets of $K$ induced by the orbit of $x$.  
\end{lemma}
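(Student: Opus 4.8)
The plan is to write down the completed order explicitly and then verify it is a left-invariant circular order by realizing it, on any finite subset, as the orientation cocycle of an actual configuration of points in $S^1$. Since $G$ acts by orientation-preserving homeomorphisms, the orbit of $x$ is identified with $G/K$, and for distinct cosets the orbit cyclic order $\bar c(aK,bK,cK):=\ord(a(x),b(x),c(x))$ is well defined and left-invariant (each element of $G$ preserves $\ord$). The given left order $<$ on $K$ induces a linear order $\prec$ on each coset by declaring $gk\prec gk'$ iff $k<k'$; left-invariance of $<$ makes this independent of the representative $g$, and in fact $G$-equivariant, meaning $g_i\prec g_j$ iff $hg_i\prec hg_j$ whenever $g_i,g_j$ lie in a common coset. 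I would then define $c\colon G^3\to\{\pm1,0\}$ by cases: $c(g_1,g_2,g_3)=\ord(g_1(x),g_2(x),g_3(x))$ when the three lie in distinct cosets; when exactly two share a coset, resolve the tie by the $\prec$-order of those two (the third, in a different coset, behaves like a point lying outside a blown-up arc at the shared orbit point); and when all three share a coset $gK$, set $c$ equal to the circular order $c_<$ of the corresponding elements of $K$ (again representative-independent, by left-invariance of $c_<$).

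The heart of the matter is the cocycle condition of Definition \ref{cocycle def}. The key observation is that it is a condition on four elements at a time, so it suffices to realize any given $a_1,\dots,a_4$ simultaneously as four genuine points of $S^1$ whose orientation cocycle reproduces $c$ on all of their triples. To do this I would take the (at most four) orbit points $a_i(x)$ and, at each orbit point hit by more than one $a_i$, blow that point up into a tiny arc and insert the colliding elements into the arc in $\prec$-order read counterclockwise. A short sign computation then shows $\ord(a_i,a_j,a_k)=c(a_i,a_j,a_k)$ for every triple: a collision of two elements is seen by the outside element only through the counterclockwise (hence $\prec$) order of the pair, and a triple inside one arc recovers exactly $c_<$. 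Since $\ord$ satisfies the cocycle identity for any four points of $S^1$, so does $c$. Equivalently — and this is the conceptual version — one can perform all the blow-ups at once, building a $G$-equivariant blow-up $\Sigma\to S^1$ of the whole orbit $G(x)$ together with an action $\tilde\rho$ whose restriction to $K$ on the inserted interval $I_x$ is the compactified dynamical realization of $(K,<)$; then $c$ is literally the orbit order of an interior basepoint of $I_x$, which has trivial $\tilde\rho$-stabilizer, so $c$ is a circular order for free.

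Left-invariance of $c$ is then immediate from the pieces above: the distinct-coset case follows from left-invariance of $\bar c$, and the collision and single-coset cases from the $G$-equivariance of $\prec$ (writing $g_i=gk_i$ for a common representative $g$, left multiplication by $h$ replaces $g$ by $hg$ and leaves the $k_i$, hence the values of $c_<$, unchanged). By construction $c$ agrees with the partial cyclic order $\bar c$ on cosets, which is the required compatibility, and its restriction to triples inside $K$ equals $c_<$, so $c$ genuinely encodes the chosen order $<$. In particular, applying the construction to two different left orders on $K$ (for instance $<$ and its reverse, which differ whenever $K\neq\{\id\}$) produces two circular orders on $G$ that share the same coset partial order — the flexibility exploited in the proof of Theorem \ref{characterization thm}.

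The step I expect to be the main obstacle is the bookkeeping that makes the blow-up coherent. In the four-point version this is just the finite sign computation identifying $\ord$ of the inserted points with $c$. In the global version one must fix coset representatives and check that the formula $\tilde\rho(h)\colon I_{r(x)}\to I_{hr(x)}$, built from the decomposition $hr=r'k$ with $r'$ the representative of $hrK$, is genuinely a homomorphism — the identity $\tilde\rho(h_2)\tilde\rho(h_1)=\tilde\rho(h_2h_1)$ reduces to $\sigma(k_2)\sigma(k_1)=\sigma(k_2k_1)$ after matching representatives — and that the inserted maps glue continuously to the action on the complement of the blown-up orbit, even when $G(x)$ is dense. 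Both are routine but must be done with care; the finite-realization route is attractive precisely because it sidesteps the density and global-coherence issues while still delivering the cocycle identity.
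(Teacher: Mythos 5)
Your construction is exactly the paper's: the proof sketch there defines the same circular order by cases ($\ord$ of orbit points when the three cosets are distinct, the left order on $K$ to break a two-point tie, and the permutation-sign formula, i.e.\ $c_<$, when all three elements lie in one coset), and then simply declares the verification ``easy and left to the reader.'' What you do differently is supply that verification, and your mechanism for it is the right one: since both the cocycle identity of Definition \ref{cocycle def} and left-invariance are conditions on finitely many elements at a time, realizing any four elements as genuine points of $S^1$ --- blowing up each multiply-hit orbit point into a small arc and inserting the colliding elements in $\prec$-order --- lets $c$ inherit the cocycle identity from $\ord$, and each case of the definition is visibly reproduced by the realized configuration (distinct arcs; a pair inside one arc seen from an outside point; a triple inside one arc recovering $c_<$). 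Left-invariance then falls out of orientation-preservation of the $G$-action together with the $G$-equivariance of $\prec$, as you say, and non-degenerate triples get values $\pm 1$ because distinct elements of a coset are separated inside the arc by the total order $\prec$. So your proof fills in precisely the step the paper omits, at the cost of nothing: the finite-realization route avoids the coherence and density issues of the global equivariant blow-up (which is also essentially correct, and is the picture the paper uses elsewhere, e.g.\ in Corollary \ref{min set 2}, but would need the bookkeeping you flag). The only loose phrasing is ``resolve the tie by the $\prec$-order of those two,'' which by itself does not pin down the sign's dependence on the position of the pair within the triple; your realization makes this precise automatically, since $\ord$ is alternating, so nothing is actually missing.
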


We include a proof sketch for completeness. 

\begin{proof}[Proof sketch] Suppose that $<_K$ is a left order on $K$.  Define $c(g_1,g_2,g_3)=1$ whenever $\ord(g_1(x),g_2(x),g_3(x))=1$. When $(g_1,g_2,g_3)$ is a non-degenerate triple but $g_1(x)=g_2(x)\neq g_3(x)$, then one can declare $c(g_1,g_2,g_3)=1 $ whenever $g_2^{-1}g_1 <_K$.  This determines also the other cases where exactly two of the points $g_i(x)$ coincide.   The remaining case is when $g_1(x)=g_2(x)=g_3(x)$, in which case we declare $c(g_1,g_2,g_3)$ to be the sign of the permutation $\sigma$ of $\{\id, g_1^{-1}g_2, g_1^{-1}g_3\}$ such that $\sigma(\id) < \sigma(g_1^{-1}g_2) < \sigma(g_1^{-1}g_3)$.   Checking that this gives a well-defined left-invariant circular order is easy and left to the reader.
\end{proof}

We end this section with the modifications necessary for the left order version of this theorem.  

\begin{proof}[Proof of Theorem \ref{linear version}] 
Since any linear order is in particular a circular order, the argument from Lemma \ref{lem continuity} can also be used to show the following 
\begin{quote} \textit{
Let $G$ be a countable group, and $<$ a left order on $G$, with dynamical realization $\rho$.  Let $U$ be a neighborhood of $\rho$ in $\Hom(G, \Homeo_+(\R))$. Then there exists a neighborhood $V$ of $<$ in $\LO(G)$ such that each order in $V$ has a dynamical realization in $U$.   
}
\end{quote}
Combining this with Lemma \ref{navas lemma} (in place of Proposition \ref{semiconj prop}, which was used in the circular order case) now shows that any order $<$ on $G$ with dynamical realization $\rho$ satisfying the rigidity assumption is an isolated left order.   

For the other direction of the proof, assuming $<$ is an isolated left order, one runs the beginning of the proof of Theorem \ref{characterization thm}:  since $<$ is isolated, there exists a finite set $F \subset G$ such that any order that agrees with $<$ on $F$ is equal to $<$.  Because the orbit of $x_0$ under $\rho(F)(x_0)$ is a finite set, there exists a neighborhood $U$ of $\rho$ in $\Hom(G, \Homeo_+(\R))$ such that, for any $\rho' \in U$, the linear order of the set $\rho'(F)(x_0)$ agrees with that of $\rho(F)(x_0)$.   Let $\rho' \in U$.  Order completion applied here again allows us to assume that $x_0$ has trivial stabilizer under $\rho'$ (applying order completion in this case will give a left rather than circular order), and so the cyclic order on $G$ induced by the orbit of $x_0$ under $\rho'(G)$ agrees with $<$ on $F$, so is equal to $<$.  By Lemma \ref{navas lemma}, this gives the existence of a map $h$ as in the theorem.   
\end{proof}

\subsection{Convex subgroups and dynamics}  \label{more properties sec}

Using our work above, we give some additional properties of isolated circular orders.  
For this, we need to describe certain subgroups associated to a circular order.  Recall that a subgroup $H$ of a left-ordered group is called \emph{convex} if, whenever one has $h_1 < g < h_2$ with $h_1, h_2$ in $H$ and $g \in G$, then $g \in H$.   We extend this to circularly ordered groups as follows.  (A similar definition appears in \cite{Jakubik}.)

\begin{definition}
A subgroup $H$ of a circularly ordered group is \emph{convex} if the restriction of $c$ to $H$ is a left order, and if whenever one has $c(h_1, g, h_2) = +1$ and $c(h_1, \id, h_2) = +1$ with $h_1, h_2$ in $H$ and $g \in G$, then $g \in H$.  
\end{definition}
In particular  $\{\id \}$ is (trivially) a convex subgroup.

\begin{lemma}[The linear part of an action] \label{linear part lem}
Let $c$ be a circular order on $G$.  Then there is a unique maximal convex subgroup $H \subset G$.  
We call $H$ the \emph{linear part} of $c$.  
\end{lemma}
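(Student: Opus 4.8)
The plan is to show that the convex subgroups of $(G,c)$ are totally ordered by inclusion, and then take $H$ to be their union. To make the definition usable, fix a dynamical realization $\rho$ of $c$ with basepoint $x_0$. The first condition in the definition of convexity — that $c|_K$ be a left order — is equivalent to $\rho|_K$ having a global fixed point: cutting $S^1$ at such a fixed point linearizes the $K$–action and reads off a left order, and conversely a left order on $K$ produces, via Proposition \ref{semiconj prop} applied to $\rho|_K$, a degree-one monotone semiconjugacy from $\rho|_K$ onto the dynamical realization of $(K, c|_K)$, whose global fixed point pulls back to a $\rho|_K$–invariant point or arc, giving a global fixed point for $\rho|_K$. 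The second condition becomes a \emph{betweenness} (hull-filling) property: fixing $p \in \mathrm{Fix}(\rho|_K)$ and writing $A_K$ for the convex hull of the orbit $\rho(K)(x_0)$ inside the arc $S^1 \setminus \{p\}$, the hypothesis $c(h_1,g,h_2)=c(h_1,\id,h_2)=+1$ with $h_1,h_2 \in K$ says exactly that $\rho(g)(x_0) \in \mathrm{int}(A_K)$; so condition (ii) reads: every $g \in G$ with $\rho(g)(x_0) \in \mathrm{int}(A_K)$ lies in $K$.

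Next I would establish the chain property, adapting the classical argument that the convex subgroups of a \emph{left}-ordered group are totally ordered. Suppose $H_1, H_2$ are convex and $a \in H_1 \setminus H_2$; I claim $H_2 \subseteq H_1$. Cutting at a fixed point $p_2$ of $\rho|_{H_2}$ gives the left order $<_2$ on $H_2$, and the failure of the betweenness condition for $a$ shows that $\rho(a)(x_0)$ lies strictly beyond the whole orbit $\rho(H_2)(x_0)$ on one side, say above $\id$. Then for each $h \in H_2$ with $h >_2 \id$ the point $\rho(h)(x_0)$ lies on the arc between $x_0$ and $\rho(a)(x_0)$, which (as $\id, a \in H_1$) sits inside $\mathrm{int}(A_{H_1})$; betweenness for $H_1$ then forces $h \in H_1$. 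The elements $h <_2 \id$ are absorbed by the inverse trick: $h^{-1} >_2 \id$ lies in $H_1$ by the previous case, hence so does $h$. Thus $H_2 \subseteq H_1$. The main obstacle here is the circular bookkeeping: one must check that the relevant arc from $x_0$ to $\rho(a)(x_0)$ — the one avoiding the fixed point $p_1$ of $\rho|_{H_1}$ — is precisely the side along which $\rho(H_2)(x_0)$ accumulates, thereby ruling out the ``crossing'' configuration in which the two hull arcs overlap without nesting. It is exactly here that the two betweenness conditions must be combined with the $<_2$–extremality of $a$.

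Granting the chain property, set $H = \bigcup \{K : K \text{ convex}\}$. Since the convex subgroups form a chain, $H$ is a subgroup. Condition (ii) passes to $H$ at once: any $h_1, h_2 \in H$ lie in a single convex $K$, and betweenness for $K$ places the corresponding $g$ in $K \subseteq H$. For condition (i) I would use compactness: the sets $\mathrm{Fix}(\rho|_K)$ are nonempty closed subsets of $S^1$, nested along the chain (a larger $K$ has a smaller fixed set), so they satisfy the finite intersection property and $\mathrm{Fix}(\rho|_H) = \bigcap_K \mathrm{Fix}(\rho|_K) \neq \emptyset$. Hence $\rho|_H$ has a global fixed point, so $c|_H$ is a left order. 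Therefore $H$ is convex and contains every convex subgroup, i.e. it is the unique maximal one.

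Finally, I would note that the left order witnessing condition (i) is canonical, which makes the construction coherent and gives a purely order-theoretic alternative for condition (i) on $H$ by gluing: if $<$ and $\prec$ are left orders on a subgroup $K$ with $c_< = c_\prec$, then each $k \neq \id$ has infinite order (ordered groups are torsion-free), so $\id, k, k^2$ are distinct and the sign of $k$ is recovered from the single value $c(\id, k, k^2) = \pm 1$; hence $< = \prec$.
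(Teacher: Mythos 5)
Your overall architecture is the same as the paper's: replace the cocycle definition of convexity by its dynamical form (a global fixed point for $\rho|_K$, together with the statement that $K$ consists exactly of those $g$ with $\rho(g)(x_0)$ in the relevant interval), prove that convex subgroups form a chain, and take the union. Your two supporting arguments---invoking Proposition \ref{semiconj prop} to extract a global fixed point for $\rho|_K$ from condition (i), and the compactness/finite-intersection argument producing a fixed point for the union---are correct, and are details the paper leaves implicit; the closing remark about recovering a left order from $c(\id,k,k^2)$ is also correct, though not needed.

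The problem is the chain property, which is the heart of the lemma, and there your proof has a genuine gap that you flag (``the main obstacle\dots'') but never close. The bridging claim you assert---that the arc between $x_0$ and $\rho(a)(x_0)$ avoiding $p_2$ ``sits inside $\mathrm{int}(A_{H_1})$'' because $\id, a \in H_1$---is false in general. Knowing that both endpoints lie in the $H_1$--interval only tells you that \emph{one} of the two arcs they bound lies inside it; nothing forces it to be the arc avoiding $p_2$, because $p_2$ (a fixed point of $\rho|_{H_2}$) may itself lie inside the $H_1$--interval. Concretely, let $G = \Z^2 = \langle a \rangle \times \langle k \rangle$ act on $S^1 = \R \cup \{\infty\}$ with $a(x) = x+1$ and $k$ an $a$-equivariant homeomorphism supported on $\bigcup_m (m, m+1)$ with $k(x)>x$ there, and $x_0 = 1/2$; one checks (all complementary gaps of the orbit closure have orbit points as endpoints, so the semiconjugacy of Lemma \ref{navas lemma} can be taken affine on gaps, hence bijective) that this is a dynamical realization of the induced order. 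Both $H_2 = \langle k \rangle$, with interval $(0,1)$, and $H_1 = G$, with interval $\R$, are convex, and $a \in H_1 \setminus H_2$. Now take $p_2 = 1 \in \fix(\rho|_{H_2})$. In the cut line, $a(x_0) = 3/2$ does lie beyond the whole $H_2$--orbit on one side, exactly as in your step, but the arc from $x_0$ to $a(x_0)$ avoiding $p_2$ runs through $\infty = \fix(\rho|_{H_1})$ and is therefore not contained in $\mathrm{int}(A_{H_1}) = \R$. (The conclusion $H_2 \subseteq H_1$ is of course true here, but not by your argument; and note that your claim happens to hold for the cut point $p_2 = 0$ and fails for $p_2 = 1$---so its truth depends on choosing the cut point correctly, which is precisely the bookkeeping you defer.)

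The missing idea, which is how the paper closes this step, is to apply the inverse trick to $a$ rather than to the elements of $H_2$: since $a \notin H_2$ and $H_2$ is a subgroup, \emph{both} $a(x_0)$ and $a^{-1}(x_0)$ lie outside the $H_2$--interval; since that interval is connected, contains $x_0$, and misses both points, it is contained in the arc between $a^{-1}(x_0)$ and $a(x_0)$ that contains $x_0$. That arc, in turn, does lie in the $H_1$--interval---not because its endpoints do, but because $a$ acts on the $H_1$--interval as an order-preserving homeomorphism of an interval, so $a(x_0)$ and $a^{-1}(x_0)$ lie on \emph{opposite} sides of $x_0$ in its intrinsic linear order, and the intrinsic interval between them is therefore exactly the arc containing $x_0$. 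This gives $\rho(H_2)(x_0) \subset I_{H_2} \subset I_{H_1}$, hence $H_2 \subseteq H_1$ by convexity of $H_1$, with no choice of cut point and no case analysis. Without this (or an equivalent) argument, every $h >_2 \id$ in your proof is routed through the false containment, so the chain property---and with it the lemma---remains unproved.
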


\begin{proof} 
Let $\rho$ be a dynamical realization of a circular order $c$ on $G$, with basepoint $x_0$.  Note that the definition of convex is easily seen to be equivalent to the following condition: \\
$(\ast)$ \textit{ $\rho(H)$ acts on $S^1$ with a fixed point, and if $g(x_0)$ lies in the connected component of $S^1 \setminus \fix(\rho(H))$ containing $x_0$, then $g \in H$.} \\
In other words, if $I_H$ denotes the connected components of $S^1 \setminus \fix(\rho(H))$ containing $x_0$, then $H = \{g \in G : g(x_0) \in I_H \}$.  
Identifying $I_H$ with the line, the induced linear order on $H$ agrees with the order on the orbit of $x_0$ under $\rho(H)$.   

Now, assume that $H$ and $K$ are two proper convex subgroups. Then, since $H\cup K\neq G$, and $g(x_0)\notin I_H\cup I_K$ for $g\in G\setminus(H\cup K)$, we have that $I_H\cup I_K\neq S^1$. Therefore, the cyclic order in $G$ induces a linear order on $H\cup K$ that we denote $<$. Assume $h \in H \setminus K$, then (up to replacing $h$ with its inverse) we have $h(x_0) > k(x_0)$ for all $k \in K$, and $h^{-1}(x_0) < k(x_0)$ for all $k \in K$.  Thus, $H \supset K$.   It follows that the union of convex subgroups is convex, and so the union of all convex subgroups is the (unique) maximal element.  
\end{proof}

To give a more complete dynamical description of the linear part of a circular order, we use the following general fact about groups acting on the circle.  

\begin{lemma}  \label{trichotomy} 
Let $G$ be any group, and $\rho: G \to \Homeo_+(S^1)$ any action on the circle. Then there are three mutually exclusive possibilities.
\begin{enumerate}[nosep, label=\roman*)]
\item There is a finite orbit.  In this case, all finite orbits have the same cardinality. 
\item The action is minimal, i.e. all orbits are dense.
\item There is a (unique) compact $G$--invariant subset $K \subset S^1$, homeomorphic to a Cantor set, and contained in the closure of any orbit.   This set is called the \emph{exceptional minimal set}.  
\end{enumerate} 
\end{lemma}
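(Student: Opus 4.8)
The plan is to extract a minimal nonempty closed $\rho(G)$-invariant set and read the trichotomy off its topology. First I would establish existence of such a set: the family of nonempty closed $\rho(G)$-invariant subsets of $S^1$ is nonempty (it contains $S^1$), and by compactness every descending chain has nonempty intersection (finite intersection property), so Zorn's lemma yields a minimal element $K_0$. By minimality, $K_0 = \overline{\rho(G)x}$ for every $x \in K_0$, since each such orbit closure is a nonempty closed invariant subset of $K_0$. In particular the $\rho(G)$-orbit of every point of $K_0$ is dense in $K_0$.

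Next I would classify $K_0$ topologically, using that its topological boundary $\partial K_0$ and its derived set (the set of accumulation points) are again closed and $\rho(G)$-invariant, hence each is either empty or all of $K_0$ by minimality. If $K_0$ is finite it is a single finite orbit, giving case (i). If $K_0$ is infinite, its derived set must equal $K_0$, so $K_0$ is perfect; then either $\partial K_0 = \emptyset$, forcing $K_0 = S^1$ since $S^1$ is connected, whence every orbit closure is a nonempty closed invariant set and so equals $S^1$ (case (ii)); or $\partial K_0 = K_0$, so $K_0$ has empty interior and is a nonempty, perfect, compact, totally disconnected subset of $S^1$, i.e. a Cantor set (case (iii)).

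The heart of the argument, and the step I expect to be most delicate, is proving that in case (iii) the Cantor set $K_0$ lies in the closure of every orbit; this simultaneously gives uniqueness and mutual exclusivity. Here I would exploit the gap structure: $S^1 \setminus K_0$ is a countable union of disjoint open intervals whose lengths sum to the total length of $S^1$, so for each $\delta > 0$ only finitely many gaps have length $\geq \delta$. Fix $x \in S^1$ and $p \in K_0$, and let $a \in K_0$ be an endpoint of the gap containing $x$ (or $a = x$ if $x \in K_0$). The orbit of $a$ is dense in $K_0$ by the first paragraph, so infinitely many translates $\rho(g)(a)$ approach $p$; all but finitely many are endpoints of short gaps, and since $\rho(g)(x)$ lies in the gap bounded by $\rho(g)(a)$, we get $\rho(g)(x) \to p$. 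Thus $K_0 \subseteq \overline{\rho(G)x}$ for all $x$, so any minimal set contains $K_0$ and hence equals it, proving uniqueness. Mutual exclusivity follows: a finite orbit is a proper closed invariant set, so it cannot coexist with a minimal action (case (ii)); an infinite $K_0$ cannot sit inside a finite orbit, so cases (i) and (iii) are incompatible; and a minimal action admits no proper Cantor invariant set.

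Finally, for the remaining claim in case (i) — that all finite orbits have the same cardinality — I would use that an orientation-preserving homeomorphism preserving a finite cyclically ordered set acts on it as a cyclic rotation, so $\rho(G)$ permutes the complementary gaps of a finite orbit $F_1$ transitively. Given a second finite orbit $F_2$, necessarily disjoint from $F_1$, every point of $F_2$ lies in a gap of $F_1$, and transitivity forces the number of $F_2$-points per gap to be constant; hence $|F_1|$ divides $|F_2|$, and by symmetry $|F_1| = |F_2|$. The only routine verifications left are that homeomorphisms preserve boundary, interior, and accumulation points, and that a nonempty perfect nowhere-dense compact subset of $S^1$ is a Cantor set.
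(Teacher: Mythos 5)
Your proof is correct, and it supplies something the paper deliberately omits: the paper does not prove Lemma \ref{trichotomy} at all, but simply cites Section 5 of \cite{Ghys Ens}. Your argument is essentially the classical one found in that reference: produce a minimal set $K_0$ by Zorn's lemma, note that the boundary and the derived set of $K_0$ are closed invariant subsets of $K_0$ and hence (by minimality) empty or all of $K_0$, and split into the three cases according to whether $K_0$ is finite, all of $S^1$, or perfect and nowhere dense; the gap-length estimate (only finitely many complementary intervals of $K_0$ exceed any given length) is exactly the standard device for showing the exceptional minimal set lies in every orbit closure, and your divisibility argument for finite orbits is the standard one as well. Two small points deserve an explicit sentence in a final write-up. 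First, in your third paragraph the translates $\rho(g_n)(a)$ accumulating at $p$ must be chosen to be pairwise \emph{distinct} points, so that they bound pairwise distinct --- hence eventually short --- gaps; this is available because $K_0$ is perfect and the orbit of $a$ is infinite and dense in $K_0$ (both consequences of minimality), and it is precisely where perfectness, rather than mere density, is used. Second, transitivity of $\rho(G)$ on the complementary intervals of a finite orbit $F_1$ should be justified via the $G$-equivariant bijection sending each point of $F_1$ to the gap immediately counterclockwise of it; this is the one place the orientation-preserving hypothesis enters, and with it noted, the counting argument $|F_1| \mid |F_2|$ and $|F_2| \mid |F_1|$ is airtight.
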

A proof can be found in \cite[Sect. 5]{Ghys Ens}.   

\begin{proposition}[Dynamical description of the linear part] \label{linear part prop}
Let $\rho$ be the dynamical realization of a circular order $c$.   According to the trichotomy above, the linear part of $c$ has the following description.
\begin{enumerate}[nosep, label=\roman*)]
\item (finite orbit case) the linear part is the finite index subgroup of $G$ that fixes any finite orbit pointwise.   
\item (minimal case) the linear part is trivial.
\item (exceptional case) the linear part is the stabilizer of the connected component of $S^1 \setminus K$ that contains $x_0$.   
\end{enumerate} 
\end{proposition}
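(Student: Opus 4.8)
The plan is to work entirely on the dynamical side, using the characterization $(\ast)$ of the linear part $H$ established in the proof of Lemma~\ref{linear part lem}: namely, $\rho(H)$ fixes $\partial I_H$, the interval $I_H$ is the connected component of $S^1\setminus\fix(\rho(H))$ containing $x_0$, and $H=\{g\in G:\rho(g)(x_0)\in I_H\}$. I would first record two preliminary observations valid for \emph{any} nontrivial convex subgroup $H$ with $I_H=(p,q)$. First, the orbit $\rho(H)(x_0)$ accumulates at both endpoints $p$ and $q$: since $I_H$ is a single component of the complement of $\fix(\rho(H))$, the group $\rho(H)$ has no fixed point inside $I_H$, so the supremum (in arc coordinates on $I_H\cong\R$) of the $\rho(H)$--orbit, being preserved by the increasing maps $\rho(H)|_{I_H}$, must equal $q$, and similarly the infimum equals $p$. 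Second, for every $g\notin H$ the translate $\rho(g)(I_H)$ is disjoint from $I_H$ up to endpoints: nesting $\rho(g)(I_H)\subsetneq I_H$ (or the reverse, applied to $g^{-1}$) would force $\rho(g)(x_0)\in I_H$ and hence $g\in H$; and if the two intervals were linked, say with $\rho(g)(p)\in(p,q)$, then choosing $h_n\in H$ with $\rho(h_n)(x_0)\to p$ gives $\rho(gh_n)(x_0)=\rho(g)\rho(h_n)(x_0)\to\rho(g)(p)\in I_H$, so $gh_n\in H$ for large $n$ and again $g\in H$. Thus the translates $\{\rho(g)(I_H)\}$ are pairwise disjoint, indexed by $G/H$. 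This already settles the \emph{minimal case} (ii): if $H$ were nontrivial then $\bigcup_g\rho(g)(I_H)$ would be a nonempty $G$--invariant open set whose complement contains $p$, hence a proper nonempty closed $G$--invariant subset of $S^1$, contradicting minimality; so $H=\{\id\}$.

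For the \emph{finite-orbit case} (i), let $\Lambda$ be a finite orbit and $(a,b)$ the complementary gap of $\Lambda$ containing $x_0$ (note $x_0\notin\Lambda$, since $x_0$ has trivial stabilizer while points of $\Lambda$ do not). Because an orientation-preserving homeomorphism permuting the cyclically ordered set $\Lambda$ acts as a rotation, fixing one point of $\Lambda$ forces fixing all of it; hence $\stab((a,b))=\stab(a)$ equals the pointwise stabilizer $H^\ast$ of $\Lambda$, the finite-index kernel of the action $G\to\Z/|\Lambda|$. I would then show $H=H^\ast$ in two steps. For \emph{convexity} of $H^\ast$, the only thing to check is that $\rho(H^\ast)$ has no common fixed point strictly inside $(a,b)$, so that $I_{H^\ast}=(a,b)$. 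For \emph{maximality}, any convex $H'\supsetneq H^\ast$ is again finite-index and fixes the endpoint $p'\in\partial I_{H'}$; then $\stab(p')\supseteq H'$, so the $G$--orbit of $p'$ is finite of cardinality $[G:\stab(p')]\le[G:H']<[G:H^\ast]=|\Lambda|$, contradicting the statement in Lemma~\ref{trichotomy} that all finite orbits have the same cardinality. Combining the two steps gives $H=H^\ast$.

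For the \emph{exceptional case} (iii), let $K$ be the exceptional minimal set. I would first argue that $x_0\notin K$: if $x_0\in K$ then $\rho(G)(x_0)\subseteq K$, every complementary gap of $K$ is free of orbit points, and collapsing these gaps would yield a strictly smaller realization of the same order, contradicting that $\rho$ is the dynamical realization (the minimal model, Proposition~\ref{semiconj prop}). So $x_0$ lies in a gap $U$ of $K$. Since $\rho(G)$ permutes the gaps of $K$ and a gap is determined by any one of its points, $\{g:\rho(g)(x_0)\in U\}=\stab(U)$, and $\rho(\stab(U))$ fixes the endpoints $\partial U\subset K$. As in case (i), convexity of $\stab(U)$ reduces to showing $\rho(\stab(U))$ has no common fixed point inside $U$. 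For maximality, any convex $H'\supsetneq\stab(U)$ has $I_{H'}\supsetneq U$, so $I_{H'}$ meets $K$; but the disjoint translates $\{\rho(g)(I_{H'})\}$ have closed $G$--invariant complement, which must contain the minimal set $K$, whereas $I_{H'}\cap K\ne\emptyset$ shows $\bigcup_g\rho(g)(I_{H'})$ meets $K$, a contradiction. Hence $H=\stab(U)$.

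The main obstacle is the common fixed point step appearing in cases (i) and (iii): showing that $\rho(H^\ast)$ (respectively $\rho(\stab(U))$) has no common fixed point in the interior of the gap containing $x_0$. This is exactly where one must use that $\rho$ is the \emph{dynamical realization} rather than an arbitrary action with the prescribed orbit. A common interior fixed point $r$ would confine $\rho(H^\ast)(x_0)$ to one side of $r$ (since that orbit equals the full orbit of $x_0$ inside the gap), leaving a subinterval of the gap free of orbit points; collapsing this subinterval together with its $G$--translates would then produce a strictly smaller model of the same circular order, contradicting the ``most minimal'' characterization of the dynamical realization from Proposition~\ref{semiconj prop}. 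Making this collapsing precise --- verifying that it defines a genuine degree-one monotone semi-conjugacy onto a distinct realization --- is the technical heart of the argument, and the same principle is what rules out $x_0\in K$ above and, in case (i), forces the finite orbit to be essentially unique.
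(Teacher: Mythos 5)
Your preliminary facts, your treatment of the minimal case, and both of your maximality arguments are correct; in particular, your case (i) maximality step, comparing cardinalities of finite orbits via Lemma \ref{trichotomy}, is a valid alternative to the paper's more direct observation that any $g \notin H^\ast$ acts with no fixed point at all and therefore lies in no convex subgroup. The genuine problem is the step you yourself single out as the technical heart, and it is a problem twice over. First, it is not needed. The characterization $(\ast)$ asks only that $\rho(H^\ast)$ have \emph{some} fixed point and that $\rho(g)(x_0)\in I_{H^\ast}$ imply $g\in H^\ast$. Since $\fix(\rho(H^\ast))\supseteq\Lambda$, one has $I_{H^\ast}\subseteq(a,b)$ automatically, and any $g$ with $\rho(g)(x_0)\in(a,b)$ maps the $\Lambda$-gap containing $x_0$ to itself, hence lies in $\stab((a,b))=H^\ast$. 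Convexity follows with no information about fixed points of $H^\ast$ in the interior of $(a,b)$; the identical one-line argument handles $\stab(U)$ in case (iii), since $\rho(g)(x_0)\in U$ forces $\rho(g)(U)=U$ because $\rho(g)$ permutes the components of $S^1\setminus K$. This is exactly how the paper argues, which is why its proof of this proposition uses no collapsing or minimality machinery at all.

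Second, the collapsing argument you offer would not, as stated, establish the fixed-point claims anyway. ``Producing a strictly smaller model'' is not by itself in conflict with Proposition \ref{semiconj prop}: that proposition guarantees a continuous degree one monotone map $f$ from \emph{any} action inducing the same order onto $\rho$, and after a collapse such an $f$ may perfectly well exist --- if the collapsed interval sits inside a gap of the closure of $\rho(G)(x_0)$, one simply extends $f$ across the collapsed point and no contradiction arises. The contradiction, as in the paper's Lemma \ref{min set condition}, appears only when both endpoints of each collapsed interval lie in the closure of the orbit, with at most one of them an actual orbit point, for only then is continuity of $f$ forced to fail. In your case (i) that is precisely what is not known: the endpoint $b$ lies in $\Lambda$, and nothing established so far places $\Lambda$ inside $\overline{\rho(G)(x_0)}$. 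If $b$ were outside that closure, then by the affine-extension rule of Definition \ref{dynam def} the group $H^\ast$ would act as the identity on the whole gap of the orbit closure containing $b$, and the collapse of $(r,b)$ could be continuously inverted --- no contradiction at all. Ruling this scenario out requires first proving that every gap of the orbit closure of a dynamical realization has both endpoints in the orbit (by collapsing \emph{those} gaps, whose endpoints do lie in the closure by definition), and only then deducing $\Lambda\subseteq\overline{\rho(G)(x_0)}$ and your fixed-point statements; your sketch assumes the end of this chain to start it. In short: the claims you call the technical heart are in fact true (for infinite $G$), but they are both unnecessary for the proposition and not proved by the argument you give.
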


\begin{proof}  
We use the alternative characterization $(\ast)$ of the linear part of an order given in the proof of Lemma \ref{linear part lem}.  Suppose first that $\rho(G)$ has a finite orbit, say $O$, and let $H$ be the finite index subgroup fixing this orbit pointwise.  Then $H$ is a left-ordered subgroup since it acts on $S^1$ with a fixed point, and $H$ is convex with $I_H$ equal to the connected component of $S^1 \setminus O$ containing $x_0$.  Moreover, for any $g \notin H$, the action of $\rho(g)$ has no fixed point (it cyclically permutes the connected components of $S^1 \setminus O$ and so $g$ is not in any convex subgroup.  This shows that $H$ is the maximal convex subgroup of $G$.

More generally, if $H$ is a convex subgroup of any circular order on a group $G$, and $g \notin H$, then $\rho(g)(I_H) \cap I_H$ is always empty.  To see this, note that the endpoints of $I_H$ are accumulation points of $\rho(H)(x_0)$, so if $\rho(g)(I_H) \cap I_H \neq \emptyset$, then there exists some element of the form $\rho(gh)(x_0) \in I_H$.  Since $H$ is convex, this means that $g \in H$.  

This argument shows that $I_H$ is always a \emph{wandering interval} for the action of $\rho(G)$, and in particular, that the action of $G$ cannot be minimal.  
Moreover, when the maximal convex subgroup $H$is non-trivial, $I_H$ is the maximal (with respect to inclusion) wandering interval containing $x_0$, namely a connected component of the complement of the exceptional minimal set.  
This completes the proof.  
\end{proof}

We now state and prove the main result in this section.

\begin{theorem} \label{isolated linear part} 
If $c$ is an isolated circular order on an infinite group $G$, then $c$ has nontrivial linear part, and the induced left order on the linear part is an isolated left order.  
\end{theorem}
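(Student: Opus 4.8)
The plan is to combine the dynamical description of the linear part (Proposition \ref{linear part prop}) with the trichotomy of Lemma \ref{trichotomy} to establish nontriviality, and then to exhibit $c$ as an \emph{order completion} (Lemma \ref{completion lemma}) of a single circular order on the coset space of the linear part $H$ by the induced left order on $H$, so that a perturbation of the fiber order yields a perturbation of $c$.

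For nontriviality, let $\rho$ be the dynamical realization of $c$ with basepoint $x_0$ and $H$ its linear part. By Proposition \ref{linear part prop}, $H$ is trivial only in the minimal case of Lemma \ref{trichotomy}; but there $x_0$ has trivial stabilizer and dense orbit, so $c$ is not isolated by Corollary \ref{dense orbit cor}. In the finite-orbit case $H$ is the pointwise stabilizer of a finite orbit, hence of finite index in the infinite group $G$, and so is nontrivial. There remains the exceptional case with $H=\stab(I_H)$ trivial, $I_H$ being the complementary component of the exceptional minimal set $K$ containing $x_0$. Here I would first observe, as in the proof of Lemma \ref{lem continuity}, that since the orbit of $x_0$ is not dense it consists of isolated points; as every orbit in $K$ is dense in $K$, this forces $x_0\notin K$, so that $I_H$ is well defined. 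Collapsing every complementary interval of $K$ to a point then produces a \emph{minimal} action $\bar\rho\colon G\to\Homeo_+(S^1)$ in which the image $\bar x$ of $I_H$ has trivial stabilizer and dense orbit; because distinct elements of $G$ send $x_0$ into distinct gaps, the cyclic order of the orbit of $\bar x$ agrees with $c$. Thus $c$ is induced by a minimal action, so again $c$ is not isolated by Corollary \ref{dense orbit cor}. Hence whenever $c$ is isolated, the linear part $H$ is nontrivial.

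For the isolation of $<_H$ I would use convexity of $H$ to build an auxiliary action $\bar\rho\colon G\to\Homeo_+(S^1)$ in which the image $\bar x$ of $I_H$ has stabilizer exactly $H$ and whose orbit reproduces the cyclic order on the cosets $G/H$ determined by $c$: in the exceptional case $\bar\rho$ is the collapse of every complementary interval of $K$ (still a circle, as $K$ is a Cantor set), and in the finite-orbit case it is the rotation action $G\to\Z/m\subset\Homeo_+(S^1)$ on the finite orbit. By Lemma \ref{completion lemma}, every left order $<'$ on $H$ extends to a circular order $c_{<'}$ on $G$ agreeing with this coset order, and one checks that $c$ is exactly the completion $c_{<_H}$ of the induced order $<_H$. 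The assignment $<'\mapsto c_{<'}$ is injective, since the restriction of $c_{<'}$ to $H$ is the circular order determined by $<'$, and continuous, since the value of $c_{<'}$ on a triple from a finite set $S\subset G$ depends only on the fixed coset order and on the restriction of $<'$ to the finite set $\{g^{-1}g' : g,g'\in S,\ gH=g'H\}\subset H$. Hence if $<_H$ were approximated by left orders $<'\neq <_H$, then $c=c_{<_H}$ would be approximated by the distinct circular orders $c_{<'}$, contradicting isolation of $c$; therefore $<_H$ is an isolated left order.

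The crux, and the place I expect the real work, is the structural claim that $c$ is the completion of a single circular order on $G/H$ by the fiber order $<_H$ — in particular that the within-coset comparisons among the points $\rho(g)(x_0)$ are governed by $<_H$ exactly as in the completion formula of Lemma \ref{completion lemma}, and that the required auxiliary action $\bar\rho$ with a stabilizer-$H$ point genuinely exists in each case (the naive collapse of all gaps does \emph{not} yield a circle in the finite-orbit case, forcing the rotation model there). Together with the exceptional-case collapse-to-minimal argument in the nontriviality step, these are the substantive points; the injectivity and continuity of the completion map are then routine verifications.
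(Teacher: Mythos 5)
Your second half (isolation of the induced left order on $H$ via the completion map $<' \mapsto c_{<'}$, together with its injectivity and continuity) is sound, and it is essentially the paper's own argument with the details of Lemma \ref{completion lemma} spelled out. The genuine gap is in your nontriviality step: both times you invoke Corollary \ref{dense orbit cor} --- once to kill the minimal case, and once more after collapsing the gaps of the exceptional minimal set --- the reasoning is circular. In the paper, Corollary \ref{dense orbit cor} is not an independent tool: it is stated in the introduction with a pointer to Theorem \ref{isolated linear part}, and in Section \ref{more properties sec} it is explicitly deduced \emph{from} Theorem \ref{isolated linear part} together with Proposition \ref{linear part prop} (``this implies that the dynamical realization of any isolated circular order \ldots cannot be minimal. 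This was Corollary \ref{dense orbit cor} stated in the Introduction''). So ``trivial stabilizer plus dense orbit implies not isolated'' is precisely the hard content of the theorem you are proving; citing it assumes the conclusion.

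What is missing is exactly the minimal case, and it does not come for free. The paper handles it via Ghys's dichotomy (Proposition \ref{minimal prop}): either $\rho(G)$ is conjugate to a group of rotations, in which case a genuinely separate argument (Lemma \ref{abelian lem}, which builds nearby but distinct embeddings of $G$ into $\R/\Z$ using $\Q$-linear algebra in the non-torsion case and automorphisms of the $p$-power torsion subgroups otherwise) shows $c$ is not isolated; or there are contracting intervals at every point, in which case one takes basepoints $y_k = \rho(h_k)(x_0) \to x_0$, uses Proposition \ref{basepoint prop} to see that the conjugate orders based at $y_k$ converge to $c$, and then uses the contracting sequences to exhibit, for each $k$, a triple whose orientation differs between the order at $y_k$ and $c$ --- this last step is what certifies that the approximating conjugates are actually \emph{distinct} from $c$. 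Note you cannot bypass the rotation subcase: for a group of rotations all conjugate orders equal $c$, so approximation by conjugates is useless there and a different mechanism is required. A final remark: your trivial-stabilizer subcase of the exceptional case is vacuous and needs no collapse-to-minimal reduction at all; Lemma \ref{min set condition}, proved before the theorem, already shows that for a dynamical realization with exceptional minimal set every complementary interval has nontrivial stabilizer, which is how the paper dispatches that case directly.
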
 

In particular, using Proposition \ref{linear part prop}, this implies that the dynamical realization of any isolated circular order on any group $G$ cannot be minimal.  This was Corollary \ref{dense orbit cor} stated in the Introduction, and it  immediately gives a new proof of the main construction (Theorem 4.6) of \cite{BS}.   The fact that the linear part of an isolated circular order has an isolated linear order implies also that it has a nontrivial \emph{Conradian soul}, as defined in \cite{Navas orders}, and that the soul admits only finitely many left orders.  

For the proof, we will use a consequence of the following theorem of Margulis.  

\begin{proposition}[Margulis, \cite{Margulis}] \label{margulis prop} 
Let $G$ be any group, and $\rho: G \to \Homeo_+(S^1)$ any action on the circle.  Either $\rho(G)$ preserves a probability measure on $S^1$, or $\rho(G)$ contains a nonabelian free subgroup.  
\end{proposition}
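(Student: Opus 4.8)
The plan is to reproduce Margulis's dichotomy by analyzing $\rho$ through the trichotomy of Lemma \ref{trichotomy}, producing an invariant probability measure precisely when the action is ``equicontinuous'' and a free subgroup by a ping-pong argument otherwise. First I would dispose of the finite-orbit case: if $\rho(G)$ has a finite orbit $O$, then the normalized counting measure on $O$ is $\rho(G)$-invariant, so the first alternative holds. Hence I may assume $\rho$ has no finite orbit, so by Lemma \ref{trichotomy} there is a unique minimal set, either all of $S^1$ (minimal case) or an exceptional Cantor set $K$. In the exceptional case I would reduce to a minimal action: the complementary arcs of $K$ form a countable $\rho(G)$-invariant family of disjoint intervals, and collapsing each to a point yields a minimal action $\bar\rho: G \to \Homeo_+(S^1)$ together with a degree $1$ monotone map $h: S^1 \to S^1$ with $h \circ \rho(g) = \bar\rho(g) \circ h$, exactly as in Proposition \ref{semiconj prop}. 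Since $h$ is surjective, $\rho(w)=\id$ forces $\bar\rho(w)=\id$; thus if $\bar\rho(a),\bar\rho(b)$ generate a free group then so do $\rho(a),\rho(b)$, and conversely an invariant measure for $\bar\rho$ pulls back through $h$ to one for $\rho$ (using that a minimal action with no finite orbit has no atomic invariant measure, so the collapsed gaps carry no mass). It therefore suffices to treat the minimal case.

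Assume now $\rho$ is minimal, and consider whether the family $\{\rho(g) : g \in G\}$ is equicontinuous. If it is, then by Arzel\`a--Ascoli the closure $\overline{\rho(G)}$ in $\Homeo_+(S^1)$ is a compact topological group $\mathcal K$, and averaging any probability measure against the Haar measure of $\mathcal K$ produces a $\mathcal K$-invariant, hence $\rho(G)$-invariant, probability measure; so the first alternative holds. If the family is \emph{not} equicontinuous, then at some point there exist $\epsilon_0 > 0$, points $y_n \to x$, and elements $g_n$ with $d(\rho(g_n)x, \rho(g_n)y_n) \ge \epsilon_0$. Passing to subsequences and replacing $g_n$ by $g_n^{-1}$, this yields a nondegenerate arc $J$ and elements $f_n = \rho(g_n)^{-1}$ with $|f_n(J)| \to 0$, i.e. the action admits a contracted arc.

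From a single contracted arc I would upgrade to strong proximality using minimality: since orbits are dense I can pre- and post-compose the $f_n$ with suitable group elements so as to place the shrinking image near any prescribed point and to enlarge the contracted arc, arranging that for every $\epsilon > 0$ there is $g \in G$ and an arc $A$ of length $< \epsilon$ with $\rho(g)(S^1 \setminus A') \subset A$ for some arc $A'$ of length $< \epsilon$. This lets me extract an element $a$ with north--south dynamics: disjoint, arbitrarily small arcs $R_a$ and $A_a$ with $\rho(a)(S^1 \setminus R_a) \subset A_a$ and $\rho(a^{-1})(S^1 \setminus A_a) \subset R_a$. Conjugating by an element $c$ that moves $R_a \cup A_a$ off itself — available by minimality — gives $b = cac^{-1}$ whose four distinguished arcs are disjoint from those of $a$, and the classical ping-pong lemma then shows $\langle \rho(a), \rho(b)\rangle \cong F_2$, establishing the second alternative.

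I expect the heart of the difficulty to lie in the non-equicontinuous case: upgrading the single contracted arc into genuine strong proximality, and then extracting an element with clean north--south dynamics suitable for ping-pong. The bookkeeping of arcs — ensuring that the attracting and repelling arcs of $a$ and of $b = cac^{-1}$ are simultaneously disjoint and that all images land in the correct arcs — is the delicate part, and it requires using minimality carefully to realize the conjugators $c$. A secondary point to verify at the outset is that both conclusions transfer through the gap-collapsing semiconjugacy in the exceptional case, which relies on the absence of atomic invariant measures for a minimal action with no finite orbit.
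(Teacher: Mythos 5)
First, a framing note: the paper gives no proof of Proposition \ref{margulis prop} --- it is quoted from \cite{Margulis} with an exposition cited in \cite{Ghys Ens} --- so your attempt must be measured against the standard argument. Your overall skeleton matches it: the finite-orbit case, the collapse of the exceptional minimal set (your atomlessness remark, needed to push an invariant measure back through the gap-collapsing map $h$, is exactly the right point), the transfer of freeness through the surjective semiconjugacy, and the equicontinuous case via Arzel\`a--Ascoli (noting that the family is a group, so it is closed under inverses and limits are genuine homeomorphisms) followed by Haar averaging. All of that is sound.

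The genuine gap is the step you yourself flagged as the heart of the matter: the claimed upgrade from one contracted arc to strong proximality is \emph{false} for a general minimal, non-equicontinuous action. Concretely, suppose $\rho(G)$ commutes with the rigid rotation $R$ of order $k \geq 2$ --- for instance the minimalization of a standard $k$-lift of a Fuchsian action, precisely the actions used in the proof of Theorem \ref{f_2_iso_thm}; these are minimal and not equicontinuous. If $A'$ is any arc of length less than $1/k$, then $A', RA', \dots, R^{k-1}A'$ are pairwise disjoint, hence so are their images $\rho(g)(A'), R\rho(g)(A'), \dots, R^{k-1}\rho(g)(A')$, which forces $|\rho(g)(A')| \leq 1/k$ and therefore $|\rho(g)(S^1 \setminus A')| \geq 1 - 1/k \geq 1/2$ for \emph{every} $g \in G$. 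So no element contracts the complement of a small arc into a small arc, and your extraction of an element with north--south dynamics, and hence your ping-pong configuration with four disjoint \emph{connected} arcs, cannot be carried out. The missing idea (present in \cite{Ghys Ens}) is to quotient first: the relation ``$x \sim y$ iff some contractible open arc contains both'' is $G$-invariant, minimality forces its classes to be finite of constant cardinality $k$, and the quotient is a circle on which $G$ acts minimally and \emph{strongly proximally}, the quotient map being a $k$-fold cover whose deck group is generated by a finite-order homeomorphism commuting with $\rho(G)$. Your north--south and conjugation argument is then valid downstairs, and the free subgroup pulls back: if $w$ is a reduced word with $\rho(w) = \id$, its image downstairs is trivial, contradicting freeness there, so $\rho(a), \rho(b)$ already generate $F_2$ upstairs. (Alternatively, one can stay upstairs and run ping-pong with \emph{disconnected} attracting domains consisting of $k$ arcs each, in the spirit of Section \ref{F_n section} of this paper.) With that repair, your outline becomes the standard proof.
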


The construction of the nonabelian subgroup comes from the existence of \emph{contracting intervals},  meaning intervals $I$ such that there exists a sequence $g_n$ in $G$ such that the diameter of $g_n(I)$ approaches $0$.  An exposition of the proof can be found in  \cite{Ghys Ens}.   In the case where the action of $G$ is minimal, one can find contracting intervals containing any point.  Put more precisely, we have the following corollary of the proof given in \cite{Ghys Ens}.   

\begin{proposition} \label{minimal prop}[Ghys \cite{Ghys Ens}]
Let $\rho: G \to \Homeo_+(S^1)$ be a \emph{minimal} action on the circle.  
Either $\rho(G)$ is conjugate to a group of rotations, or the following condition holds: For any point $x\in S^1$, there is a neighborhood $I$ of $x$ such that for any $y\in S^1$, there is a sequence of elements $g_{y,n}\in G$ such that $g_{y,n}(I)$ converges to $y$ as $n$ tends to $\infty$.

\end{proposition}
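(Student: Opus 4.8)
The plan is to split according to whether $\rho(G)$ preserves a probability measure on $S^1$, invoking Proposition \ref{margulis prop}. First I would dispose of the measure-preserving case. Suppose $\rho(G)$ preserves a probability measure $\mu$. Since the action is minimal, $\supp(\mu)$ is a closed $G$--invariant nonempty set, hence all of $S^1$; and $\mu$ has no atoms, for otherwise the (dense, hence infinite) orbit of an atom would carry infinite mass. Fixing a basepoint $p$ and setting $\phi(x) = \mu([p,x))$ then defines an orientation-preserving homeomorphism $S^1 \to \R/\Z$, and the computation $\mu([p, \rho(g)x)) = \mu([\rho(g)^{-1}p, x)) = \mu([\rho(g)^{-1}p, p)) + \mu([p,x))$ shows that $\phi$ conjugates each $\rho(g)$ to the rotation by $\mu([\rho(g)^{-1}p,p))$. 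Thus $\rho(G)$ is conjugate to a group of rotations, the first alternative. (After this conjugation every element is an isometry, so the contraction condition cannot also hold, confirming the two alternatives are exclusive.)

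It remains to treat the case where $\rho(G)$ admits no invariant probability measure, where I would establish the contraction condition. The key input, supplied by the proof of Proposition \ref{margulis prop} (see \cite{Ghys Ens}), is the existence of a \emph{contracting interval}: a nondegenerate interval $I_0 \subset S^1$ and a sequence $g_n \in G$ with $\mathrm{diam}(\rho(g_n)(I_0)) \to 0$. Passing to a subsequence, I may assume $\rho(g_n)(I_0)$ converges to a single point $z \in S^1$.

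Next I would use minimality to move this contracting interval so that it contains a prescribed source point $x$. The set $\bigcup_{f \in G} \rho(f)(\interior(I_0))$ is a nonempty $G$--invariant open set, so its complement is a closed $G$--invariant set which, by minimality, is empty; hence the union is all of $S^1$, and there is $f \in G$ with $\rho(f)^{-1}(x) \in \interior(I_0)$. Setting $I := \rho(f)(I_0)$ and $k_n := g_n f^{-1}$, I obtain a nondegenerate interval $I \ni x$ with $\rho(k_n)(I) = \rho(g_n)(I_0) \to z$. Finally, to reach a prescribed target $y$, I use that the orbit of $z$ is dense (minimality): choose $h_j \in G$ with $\rho(h_j)(z) \to y$. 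For fixed $j$, continuity of $\rho(h_j)$ gives $\rho(h_j k_n)(I) = \rho(h_j)(\rho(g_n)(I_0)) \to \rho(h_j)(z)$ as $n \to \infty$, so I may pick $n_j$ with $\rho(h_j k_{n_j})(I)$ within distance $1/j$ of $\rho(h_j)(z)$ and of diameter less than $1/j$. Then $m_j := h_j k_{n_j}$ satisfies $\rho(m_j)(I) \to y$, which is exactly the required condition.

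The genuinely hard step is the existence of a contracting interval in the absence of an invariant measure, but this is precisely the mechanism underlying Margulis's theorem and may be quoted from \cite{Ghys Ens}; the remaining work is the soft ``spreading'' argument above, whose only delicate point is the diagonal selection of $(n_j)$ controlling simultaneously the position and the diameter of $\rho(m_j)(I)$.
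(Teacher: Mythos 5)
Your proof is correct, and it follows the route the paper itself indicates: the paper offers no argument for this proposition at all, stating it as a corollary of Ghys's proof of Margulis's theorem (Proposition \ref{margulis prop}), with the contracting-interval mechanism mentioned only in the surrounding text. What you add is the actual derivation the paper leaves to the citation. In the invariant-measure case you give the standard argument that a minimal measure-preserving action is conjugate to a rotation group (full support and non-atomicity follow from minimality, and then $x \mapsto \mu([p,x))$ is the conjugating homeomorphism); in the remaining case you carry out the ``spreading'' of a single contracting interval by minimality, first moving it to contain $x$ via the open invariant set $\bigcup_{f} \rho(f)(\interior(I_0))$, whose complement is closed, invariant, and proper, hence empty, and then steering its contracted images toward $y$ along a dense orbit with a diagonal choice of indices $n_j$ controlling position and diameter simultaneously. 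Both halves are sound, and the one step you quote without proof --- the existence of a contracting interval when there is no invariant probability measure --- is exactly the step the paper also quotes from \cite{Ghys Ens}, so your write-up is a faithful and complete fleshing-out of the intended argument rather than a genuinely different one.
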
 

We say that the neighborhood $I$ in the above proposition is a {\em contractible} neighborhood of $x$.

In order to prove Theorem \ref{isolated linear part}, we also need to describe dynamical realizations with exceptional minimal set.  We state this as a separate lemma; it will be used again in the next section.  

\begin{lemma}[Condition for dynamical realizations]  \label{min set condition}
Suppose that $\rho: G \to \Homeo_+(S^1)$ is such that $x_0$ has trivial stabilizer, and $\rho$ has an exceptional minimal set $K$.  If $\rho$ is a dynamical realization of a circular order with basepoint $x_0$, then each connected component $I$ of $S^1 \setminus K$ contains a point of the orbit $\rho(G)(x_0)$ and has nontrivial stabilizer in $G$.  
\end{lemma}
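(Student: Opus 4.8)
My plan is to deduce both conclusions from a single structural claim about the closure $C:=\overline{\rho(G)(x_0)}$ of the orbit, proved by a collapsing argument in the spirit of the proof of Lemma~\ref{lem continuity}:
\begin{quote}\textit{For a dynamical realization, every connected component (gap) $(u,v)$ of $S^1\setminus C$ has both of its endpoints $u,v$ in the orbit $\rho(G)(x_0)$.}\end{quote}
Before proving this I would record the elementary fact that $\rho(G)(x_0)\cap K=\emptyset$: otherwise $G$-invariance of $K$ forces $x_0\in K$, hence $\rho(G)(x_0)\subset K$ and, since $K\subset C$, also $C=K$. But then $x_0$ is a non-isolated point of its own orbit, as $K$ is perfect and the orbit is dense in $C=K$; this contradicts the fact---established in the proof of Lemma~\ref{lem continuity}---that the orbit of a dynamical realization that is not dense in $S^1$ consists of isolated points.

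The main work is the claim, and this is where I expect the difficulty to lie. Suppose some gap $J=(u,v)$ of $C$ has, say, $v\notin\rho(G)(x_0)$. Since $v\in C$ is not an orbit point, there are orbit points $r_n\to v$, necessarily approaching from the side of $v$ away from $J$. The family $\mathcal J:=\{\rho(g)(J):g\in G\}$ is a $G$-invariant collection of pairwise disjoint gaps of $C$, and because the interior of $J$ meets no point of $C$ while $v\notin\rho(G)(x_0)$, each closed interval $\rho(g)(\overline J)$ contains at most one orbit point. Collapsing every member of $\mathcal J$ to a point produces a degree one monotone map $\pi\colon S^1\to S^1$ and an action $\bar\rho$ with $\pi\circ\rho(g)=\bar\rho(g)\circ\pi$; the bound of one orbit point per collapsed interval guarantees that $\pi$ is injective on $\rho(G)(x_0)$ and preserves its cyclic order, so $\bar\rho$ has $\pi(x_0)$ with trivial stabilizer and induces the same order $c$. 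After conjugating $\bar\rho$ by a homeomorphism carrying $\pi(x_0)$ to $x_0$---a harmless bookkeeping step needed to match the hypotheses of Proposition~\ref{semiconj prop}---that proposition supplies a continuous degree one monotone $f$ with $f(\pi(x_0))=x_0$ and $\rho(g)\circ f=f\circ\bar\rho(g)$, so that $f\circ\pi$ fixes every orbit point. Now $\pi(u)=\pi(v)$, and I would evaluate $f$ at this common image in two ways: using $r_n\to v$ and continuity gives $f(\pi(v))=v$, while $f(\pi(u))=u$ because $u$ is either an orbit point or, like $v$, a limit of orbit points approaching from outside $J$; hence $u=v$, impossible for a nondegenerate gap. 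This contradiction proves the claim.

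Finally I would deduce the lemma. Let $I=(a,b)$ be a component of $S^1\setminus K$, so $a,b\in K$ and, by the fact above, $a,b\notin\rho(G)(x_0)$. If the interior of $I$ contained no point of $C$, then $I$ would be a gap of $C$ with the non-orbit endpoint $a$, contradicting the claim; hence $C$ meets the interior of $I$, which forces orbit points into the interior of $I$ and gives the first assertion. For the stabilizer assertion, observe that two distinct orbit points in $I$ yield a nontrivial element carrying $I$ to a gap of $K$ meeting $I$, hence to $I$ itself, so it suffices to rule out $I$ containing exactly one orbit point $p$. In that case $I\cap C=\{p\}$ (any further point of $C$ in the open interval $I$ would produce a second orbit point), so $(a,p)$ is a gap of $C$ with the non-orbit endpoint $a$, again contradicting the claim. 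Thus every such $I$ carries at least two orbit points and has nontrivial stabilizer, completing the proof.
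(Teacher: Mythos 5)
Your proof is correct, and its engine is the same one the paper uses: collapse the $G$-orbit of a suitable ``bad'' interval, invoke Proposition~\ref{semiconj prop} to produce a continuous degree one monotone map $f$ inverting the collapse on the orbit, and contradict continuity of $f$. The packaging, however, is genuinely different. The paper works directly with a gap $I$ of $K$: assuming $I$ contains at most one orbit point, it collapses the family $\{\rho(g)(I)\}_{g\in G}$ and reaches the contradiction in one step, and both conclusions follow from ``every gap of $K$ contains at least two orbit points.'' You instead isolate an intermediate structural claim --- every gap of the orbit closure $C$ has both endpoints in the orbit --- and then deduce the lemma by short point-set arguments, together with the preliminary fact $\rho(G)(x_0)\cap K=\emptyset$, which you correctly extract from the isolated-points dichotomy established in the proof of Lemma~\ref{lem continuity}. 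Your intermediate claim is a reusable statement that cleanly encodes the ``minimality'' of dynamical realizations, and your endgame $u=f(\pi(u))=f(\pi(v))=v$ is tidier than the paper's ``$f$ cannot be continuous at this point.''

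The one place your route needs more care than the paper's is the collapsing step itself. Gaps of $C$, unlike gaps of the perfect set $K$, \emph{can} share endpoints, precisely because orbit points are isolated in $C$; so the closures $\rho(g)(\overline J)$ need not be pairwise disjoint, and ``collapsing every member of $\mathcal J$'' really means collapsing chains of members glued along common endpoints. A priori this threatens your assertion that each collapsed class contains at most one orbit point. The repair is short but should be stated: distinct members can only share an endpoint in the pattern $\rho(g)(v)=\rho(h)(u)$, which (since $v$ is not an orbit point) forces $u\notin\rho(G)(x_0)$; in that case no member contains any orbit point, and moreover no orbit point lies in $\overline{\bigcup_g\rho(g)(J)}$, since each orbit point is isolated in $C$ and hence buffered from every member by its two adjacent gaps of $C$, neither of which is a member. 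Thus collapsing the connected components of the compact set $\overline{\bigcup_g\rho(g)(J)}$ gives a $G$-invariant upper semicontinuous decomposition into points and closed arcs, the quotient is again a circle, and $\pi$ remains injective and order-preserving on the orbit, as you need. The paper's choice to collapse images of a gap of $K$ sidesteps all of this, because two gaps of a set without isolated points can never share an endpoint --- that is the concrete advantage of its more direct argument, while yours buys a cleaner contradiction and a lemma about $C$ of independent use.
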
 

\begin{proof}
Let $\rho: G \to \Homeo_+(S^1)$ be a dynamical realization with basepoint $x_0$ and exceptional minimal set $K$.  
Since $\rho(G)$ permutes the connected components of $S^1 \setminus K$, it suffices to show that each interval of $S^1 \setminus K$ contains at least two points in the orbit $\rho(G)(x_0)$.  

Suppose for contradiction that some connected component $I$ of $S^1 \setminus K$ contains one or no points in the orbit $\rho(G)(x_0)$.  Then, for each $g \in G$, the connected component $\rho(g)(I)$ of $S^1 \setminus K$ also contains at most one point in the orbit of $x_0$.  Collapsing each interval $\rho(g)(I)$ to a point gives a new circle, on which $G$ acts by homeomorphisms.  Let $\rho_2$ denote this new action, and note that $\rho_2(G)(x_0)$ has the same cyclic order as $\rho(G)(x_0)$.     More precisely, let $h: S^1 \to S^1$ be a map such that, for each $g \in G$, the image $h(\rho(g)(I))$ is a singleton, $h$ is injective on the complement of $\bigcup_g \rho(g)(I)$ and $h$ fixes $x_0$.  Define $\rho_2$ by 
$h \circ \rho(g) = \rho_2(g) \circ h$.  

Since $\rho$ is a dynamical realization, Proposition \ref{semiconj prop} gives a continuous degree one monotone $f: S^1 \to S^1$ fixing $x_0$ and such that $f \circ \rho_2(g) = \rho(g) \circ f$.  In other words, $f$ is an inverse for $h$ on the orbit of $x_0$.   However, since the endpoints of $I$ lie in the exceptional minimal set $K$, they are also in the closure of the orbit $\rho(G)(x_0)$.  But $h(I)$ is a point, so $f$ cannot be continuous at this point.  
This gives the desired contradiction.  
\end{proof}

\begin{proof}[Proof of Theorem \ref{isolated linear part}]
Suppose that $c$ is an isolated circular order on $G$.   We use proposition \ref{trichotomy} to describe its dynamical realization $\rho$.  Let $x_0$ be the basepoint of $\rho$.  First, we will show that $\rho$ cannot be minimal.  Assume for contradiction that $\rho$ is minimal.  Then Proposition \ref{minimal prop} implies that either $G$ is abelian and $\rho$ conjugate to an infinite group of rotations, or we have an interval $I$ containing $x_0$ and, for each $y \in S^1$ a sequence contracting $I$ to $\{y\}$.   The rotations case gives an order which is not isolated, this is shown in Lemma \ref{abelian lem} below. (While it is relatively easy to see that such an order on an abelian group is not isolated if, say, $G$ has rank at least one, it is more difficult for groups like the group of rotations by $n \pi/2^k, n, k \in \Z$ and deserves a separate lemma.) 

Thus, we now have only to deal with the second case of a contractible interval.  By the proof of Proposition \ref{basepoint prop}, there is an interval $I_0=[a,b]$ containing $x_0$ in its interior, such that the circular order induced from any point $y\in I$ with trivial stabilizer coincides with $c$. By taking a smaller neighborhood of $x_0$ if necessary, we may assume that $I$ is a {\em contractible} neighborhood of $x_0$. Let $y_1$, $y_2$ and $z$ in $I$ be such that $y_1,z,y_2,x_0$ are in counterclockwise order. By minimality, we may assume that all these points have trivial stabilizer.  Let $(g_{i,n})_n$ ($i=1,2$) be a sequence of elements in $G$ such that $g_{i,n}(I)$ converges to $y_i$. Let $n\geq 0$ be such that 
$$g_{1,n}(I), \; \{z\},\; g_{2,n}(I), \{x_0\}$$ 
are in counterclockwise order. Then, $c(id,g_{1,n},g_{2,n})=1,$ but if $c_z$ is the circular order induce from $z$, then $c_z(id,g_{1,n},g_{2,n})=-1$,  contradicting our assumption on $I$ that any order induce form a point in it must coincide with $c$. Thus the dynamical realization of an isolated circular order cannot be minimal.



Now we describe the case when  $\rho$ is not minimal.  By Lemma \ref{trichotomy}, it has either a finite orbit or invariant Cantor set.  In the first case, since $G$ is infinite, $x_0$ is not an element of a finite orbit, and so $c$ has a nontrivial linear part.    In the second case, Proposition \ref{min set condition} implies that $x_0$ lies in the complement of the exceptional minimal set $K$, and that the stabilizer of the connected component of $S^1 \setminus K$ containing $x_0$ is nontrivial.  Hence, $c$ has nontrivial linear part.  

Now let $H \subset G$ denote the linear part of $c$.  If the left order on $H$ were not isolated in $\LO(H)$, then order completion from Lemma \ref{completion lemma} allows us to approximate $c$ in $\CO(G)$ using left orders on $H$ approaching the restriction of $c$ to $H$.  Thus, the left order on $H$ must be isolated.    
\end{proof}

It remains only to prove the lemma on abelian groups.  

\begin{lemma}[Infinite groups of rotations are not isolated] \label{abelian lem}
Let $G$ be an infinite group, and suppose $\rho(G) \subset \mathrm{SO}(2) \cong \R/\Z$ is the dynamical realization of a circular order $c$.  Then $c$ is not isolated.  
\end{lemma}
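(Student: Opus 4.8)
The goal is to show that if $\rho(G)\subset\mathrm{SO}(2)\cong\R/\Z$ is an infinite group of rotations realizing a circular order $c$, then $c$ is approximable by distinct orders, hence not isolated. The natural strategy is to approximate $c$ by its own conjugates, using Proposition \ref{basepoint prop}(i): since all points of $S^1$ have trivial stabilizer under a free action by rotations, the orders induced from basepoints $x_i \to x_0$ converge to $c$. So the entire task reduces to finding a sequence of basepoints in the orbit of $x_0$ converging to $x_0$ from which the induced (conjugate) orders are genuinely different from $c$. Because $G$ is an infinite group of rotations, $\rho(G)$ is a dense subgroup of $\R/\Z$, so the orbit of $x_0$ accumulates at $x_0$ and such approximating sequences exist in abundance.

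The first thing I would do is reduce to the two structural cases for an infinite subgroup of $\R/\Z$. If $\rho(G)$ has an element of infinite order, or more generally if $G$ (modulo the kernel, which is trivial since the realization is faithful on the orbit) contains an element acting by an irrational rotation, then one already has a free $\Z$-action by an irrational rotation, and it is classical (and follows from the arguments in \cite{McCleary} or directly) that the induced order on $\Z$ is non-isolated; one then lifts this to $G$ via order completion (Lemma \ref{completion lemma}) or observes directly that the conjugates separate. The genuinely delicate case, flagged in the parenthetical remark preceding the lemma, is when $\rho(G)$ consists entirely of \emph{rational} rotations of unbounded denominator, such as the group of rotations by $n\pi/2^k$. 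Here every element has finite order, so no single cyclic subgroup gives the approximation, and the convexity/linear-part machinery degenerates since there are no nontrivial convex subgroups. I expect this to be the main obstacle.

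To handle the rational case, I would exploit that $\rho(G)$ is still dense in $\R/\Z$ while being a union of finite-order elements. Fix the finite set $F\subset G$ witnessing a putative isolating neighborhood. Using density, choose $g\in G$ so that $\rho(g)$ is a rotation by a very small angle $\theta$ whose order $N=\ord(\rho(g))$ is enormous compared to $|F|$ and $1/\theta$. The idea is that the conjugate order $c_{g^k}$ for suitable small $k$ agrees with $c$ on $F$ (because $\rho(g^k)(x_0)$ is close to $x_0$, within the gaps of $\rho(F)(x_0)$) yet differs from $c$ on some triple involving $g$ itself: concretely, among the points $x_0,\rho(g)(x_0),\dots,\rho(g^{N-1})(x_0)$ equally spaced around the circle, shifting the basepoint by a small rotation permutes these cyclically, and for an appropriate triple the orientation detected from the shifted basepoint disagrees with that from $x_0$ precisely because the basepoint crosses one of the orbit points. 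This mirrors the repelling-fixed-point phenomenon described in Remark \ref{basepoint rk}, adapted to the rotation setting.

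The step I expect to require the most care is verifying that the conjugate order one produces is \emph{not} equal to $c$ while still agreeing on all of $F$. The agreement on $F$ is cheap from Proposition \ref{basepoint prop}(i) once the basepoint shift is small enough. The inequality is the crux: one must pin down a specific triple $(u,v,w)$ of group elements on which $\ord(\rho(u)(x_0),\rho(v)(x_0),\rho(w)(x_0))$ and $\ord(\rho(u)(y),\rho(v)(y),\rho(w)(y))$ disagree for the shifted basepoint $y$. I would arrange this by selecting $y$ to lie strictly between two adjacent orbit points $\rho(u)(x_0)$ and $\rho(v)(x_0)$, together with a third point $\rho(w)(x_0)$ positioned so that the cyclic order of the triple relative to a basepoint depends on which side of $\rho(u)(x_0)$ the basepoint sits. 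Because $\rho(G)$ is dense, the spacing can be made finer than any gap in $\rho(F)(x_0)$, guaranteeing that this ``flipped'' triple exists outside $F$ and thus does not contradict agreement on $F$. Once such a triple is exhibited, we conclude $c_{g^k}\neq c$ while $c_{g^k}\to c$, so $c$ is not isolated, completing the proof and thereby closing the final gap in Theorem \ref{isolated linear part}.
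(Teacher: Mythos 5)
There is a genuine gap, and it is fatal to the whole strategy. Your plan rests on approximating $c$ by its conjugates, i.e.\ by orders induced from basepoints $\rho(g^k)(x_0)$ in the orbit. But $G$ here is abelian (it embeds in $\mathrm{SO}(2)$), and conjugation acts \emph{trivially} on $\CO(G)$ for abelian groups: $c_g(x,y,z) = c(g^{-1}xg, g^{-1}yg, g^{-1}zg) = c(x,y,z)$. Dynamically this is the same observation as: if $y = \rho(h)(x_0)$, then since all elements commute, the triple $\bigl(\rho(g_1)(y), \rho(g_2)(y), \rho(g_3)(y)\bigr)$ is the image of $\bigl(\rho(g_1)(x_0), \rho(g_2)(x_0), \rho(g_3)(x_0)\bigr)$ under the orientation-preserving homeomorphism $\rho(h)$, so every triple has the same orientation as seen from either basepoint. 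In fact for a group of rotations \emph{every} basepoint of $S^1$, in the orbit or not, induces the same circular order, since $\rho(g_i)(y)$ is just the rigid rotation by $y - x_0$ of $\rho(g_i)(x_0)$. Your key step --- that ``shifting the basepoint by a small rotation... for an appropriate triple the orientation detected from the shifted basepoint disagrees with that from $x_0$ because the basepoint crosses one of the orbit points'' --- is therefore false: the whole orbit rotates rigidly with the basepoint, so no relative crossing ever occurs. The phenomenon of Remark \ref{basepoint rk} that you invoke requires a point with nontrivial stabilizer and non-equicontinuous dynamics (a repelling fixed point), neither of which exists for rotation groups. Your fallback reduction in the infinite-order case has the same problem (``the conjugates separate'' is false), and the appeal to Lemma \ref{completion lemma} does not apply either, since that lemma extends orders from point \emph{stabilizers}, which are all trivial here.

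The correct proof must perturb the \emph{embedding} $G \hookrightarrow \R/\Z$ itself, not the basepoint, and this is what the paper does, purely algebraically. In the non-torsion case, one lifts $G$ to $\hat G \subset \R$, views the $\Q$-vector space $V$ spanned by $\hat G$, picks an irrational $\lambda \in V$, and applies the $\Q$-linear map sending $\lambda \mapsto \lambda'$ (with $\lambda'$ rationally independent of $V$ and very close to $\lambda$) while fixing a complement of $\Q\lambda$; this re-embeds $G$ in $\R/\Z$ with a different cyclic order that agrees with $c$ on any prescribed finite set. In the torsion case $G \subset \Q/\Z$, one decomposes $G = \oplus_p G_p$ into primary components and uses that $x \mapsto x + p^{k}x$ is an automorphism of the Pr\"ufer-type group $\{a/p^m\}$; choosing $p$ and $k$ so that the affected elements avoid the given finite set produces an injective re-embedding that is the identity on that finite set but changes the cyclic order inside $G_p$. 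Both cases produce genuinely new circular orders arbitrarily close to $c$, which is exactly what your conjugation approach cannot do.
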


\begin{proof}
Identify $G$ with its image in the additive group $\R/\Z$.   Note that the order on $G \subset \R/\Z \cong S^1$ agrees with the usual cyclic order on points of $S^1$.  

\noindent \textit{Case i. $G$ is not a torsion group.}
Let $\hat{G} \subset \R$ be the set of lifts of elements to $\R$, so we have a short exact sequence $0 \to \Z \to \hat{G} \to G \to 0$, and consider the vector space $V$ over $\Q$ generated by $\hat{G} \subset \R$. By assumption, $G$ is not torsion, so $V \not\subset \Q $.  Let $\lambda \in V \setminus \Q$.  Choose $\lambda'\in\R$ linearly independent over $\Q$ from $V$, and define $\phi: V \to \R$ by $\lambda q \mapsto \lambda' q$ for $q \in \Q$, and $\alpha \mapsto \alpha$ for any $\alpha$ in the complement of the span of $\lambda$.  
Then $\phi(\hat{G}) \cong \hat{G}$ as additive groups, and $\phi$ descends to an embedding of $G$ in $\R/\Z$ with a different cyclic order.  This order can be made arbitrarily close to the original one by taking $\lambda'$ as close as we like to $\lambda$.  
\medskip

\noindent \textit{Case ii. $G \subset \Q/\Z$. }
In this case $G$ is an infinite abelian torsion group and we can decompose it into a direct sum of groups $G = \oplus G_p$, where $G_p$ is the group of all elements who have order a power of $p$, for each prime $p$. These are simply the elements $a/p^k \in \R/\Z$.  We use the following basic fact. 

\begin{fact}
Let $A_p = \{a/p^k : k \in \N \} \subset \R/\Z$.   Then, for any $k$, the function $x \mapsto x+ p^k x$ is an automorphism of $A_p$.  
\end{fact} 
To see this, one checks easily that any map
$x \mapsto \sum_{i=0}^\infty a_i p^i x$ 
gives a well defined endomorphism. (For fixed $x \in A_p$, all but finitely many terms in the formal power series $\sum a_i p^i x$ vanish mod $\Z$). Since $1 + p^k$ is invertible in the $p$-adic integers, $x \mapsto x+ p^k x$ has an inverse, so is an automorphism.  

Given any finite subset $S$ of $G$, we can find some $G_p \subset G$ and $N > 0$ such that $G_p \cap \{a/p^k : k>N \}$ is nonempty and does not contain any element of $S$.   Let $k$ be the smallest integer greater than $N$ such that $G_p$ contains an element of the form $a/p^k$.   Using the fact above, define a homomorphism $G \to \R/\Z$ to be the identity on $G_q$ for $q \neq p$, and to be $x \mapsto x+ p^{k-1} x$ on $G_p$.    Note that this is well defined, injective, restricts to the identity on $S$, and changes the cyclic order of elements of $G_p$. 
\end{proof} 

We conclude this section with a converse to Lemma \ref{min set condition} and corollary to the proof of Proposition \ref{semiconj prop}.  It will also be useful in the next section.  

\begin{corollary} \label{min set 2}
Suppose that $\rho: G \to \Homeo_+(S^1)$ is an action with exceptional minimal set $K$, that $\rho(G)$ acts transitively on the set of connected components of $S^1 \setminus K$, and that, for some component $I$, the stabilizer of $I$ is nontrivial and acts on $I$ as the dynamical realization of a linear order with basepoint $x_0 \in I$.  Then $\rho$ is the dynamical realization of a circular order with basepoint $x_0$.
\end{corollary}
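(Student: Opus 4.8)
The plan is to manufacture the circular order from the orbit of $x_0$ and then prove that $\rho$ realizes it as economically as possible, i.e.\ that the canonical semi-conjugacy from $\rho$ to the abstract dynamical realization is invertible. First I would record that $x_0$ has trivial stabilizer in $G$: if $\rho(g)(x_0)=x_0$ then $\rho(g)(I)$ meets $I$, so $\rho(g)(I)=I$ and $g\in\stab(I)$; but $\stab(I)$ acts on $I$ as the dynamical realization of a linear order, for which the basepoint has trivial stabilizer. Hence I may define $c$ to be the circular order induced by the $\rho$-orbit of $x_0$, let $\rho_1$ be its dynamical realization with basepoint $x_0$, and apply Proposition \ref{semiconj prop} to obtain a continuous degree $1$ monotone map $f\colon S^1\to S^1$ with $f(x_0)=x_0$, $\rho_1(g)\circ f=f\circ\rho(g)$, and $f(\rho(g)(x_0))=\rho_1(g)(x_0)$; by its construction $f$ is affine on each complementary interval of $\overline{\rho(G)(x_0)}$. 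Since a monotone degree $1$ map is injective exactly when it collapses no nondegenerate arc, the whole problem reduces to ruling out such collapses: once $f$ is injective it is a homeomorphism, $\rho=f^{-1}\rho_1 f$ with $f^{-1}$ fixing $x_0$, and Remark \ref{conj rem} together with Corollary \ref{dynam real cor} then identify $\rho$ as a dynamical realization of $c$ with basepoint $x_0$.

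If an arc $J$ collapsed by $f$ met $K$ in its interior, I would pick $k\in\interior(J)\cap K$; since $K$ lies in the closure of every orbit (Lemma \ref{trichotomy}), infinitely many distinct points $\rho(g_i)(x_0)$ lie in $\interior(J)$, forcing the distinct points $\rho_1(g_i)(x_0)=f(\rho(g_i)(x_0))$ all to equal the single value $f(J)$ — impossible, as $x_0$ has trivial stabilizer under the dynamical realization $\rho_1$. Thus a collapsed arc must have interior inside a single complementary component $I_\alpha$, and by transitivity of $G$ on the components together with the equivariance $\rho_1(g)\circ f=f\circ\rho(g)$ it suffices to exclude collapses inside the base gap $I$.

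The crux is the gap case. On $I$ the action $\rho|_{\stab(I)}$ is, by hypothesis, the dynamical realization of a linear order $<$. Writing $H=\stab(I)$ and $\sigma=\rho_1(H)|_{f(I)}$ (here $f(I)$ is a nondegenerate $\rho_1(H)$-invariant open arc, since $\rho_1(h)(f(I))=f(\rho(h)(I))=f(I)$ and it contains the distinct points $\rho_1(h)(x_0)$, so I identify it with $\R$), the map $f|_I$ is a monotone surjection intertwining $\rho|_H$ with $\sigma$, and both induce the order $<$ on $H$ from $x_0$. One checks $\sigma$ has no global fixed point and that $x_0$ has trivial $\sigma$-stabilizer — a global fixed point of $\sigma$, or a nontrivial $\sigma$-stabilizer of $x_0$, would pull back through $f$ to a $\rho|_H$-invariant point or interval, giving the dynamical realization of a nontrivial order a global fixed point. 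Because $\rho|_H$ is precisely the dynamical realization of $<$, Lemma \ref{navas lemma} applied to $\sigma$ yields a reverse map $g\colon f(I)\to I$ (continuous, being monotone and surjective) with $g(x_0)=x_0$ and $\rho(h)|_I\circ g=g\circ\sigma(h)$. Then $\phi:=g\circ f|_I$ commutes with $\rho|_H$ and fixes $x_0$, hence fixes the entire orbit $\rho(H)(x_0)$, and by continuity fixes its closure $C_I:=\overline{\rho(G)(x_0)}\cap I$ pointwise. So $\phi|_{C_I}=\id$ is injective, which forces $f|_{C_I}$ to be injective; combined with the fact that $f$ is affine on each complementary interval of $C_I$ (whose endpoints lie in $C_I$ and hence have distinct images), this shows $f|_I$ is injective.

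Putting the cases together, $f$ collapses no arc and is therefore a homeomorphism, which exhibits $\rho$ as a dynamical realization of $c$ with basepoint $x_0$. The delicate point, and where I expect the real work, is the gap case: a monotone map out of a dynamical realization could \emph{a priori} collapse intervals at limit points of the orbit, and what rules this out is the existence of the reverse semi-conjugacy $g$ guaranteed by the minimality of dynamical realizations (Navas's Lemma), which certifies through $\phi|_{C_I}=\id$ that no collapse occurs.
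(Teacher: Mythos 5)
Your proposal is correct and follows essentially the same route as the paper's proof: build $c$ from the orbit of $x_0$, take the semi-conjugacy $f$ from Proposition \ref{semiconj prop} onto the abstract dynamical realization, use transitivity on the components of $S^1\setminus K$ to push any collapsed arc into $I$, and then apply Lemma \ref{navas lemma} to $\stab(I)$ acting on $I$ to get a reverse monotone map whose composition with $f$ is the identity on the orbit closure, contradicting the collapse. Your extra steps --- the case analysis for arcs meeting $K$, the verification of the hypotheses of Lemma \ref{navas lemma}, and the explicit map $\phi = g\circ f$ --- are just more careful renderings of the paper's observations that collapsed complementary intervals of $\overline{\rho(G)(x_0)}$ automatically avoid $K$ and that ``$\hat f$ is a local inverse of $f$, so it cannot be continuous.''
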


\begin{proof} Let $\rho$ be as in the proposition and let $x_0\in I$. Then $x_0$ has trivial stabilizer.  Suppose for contradiction that $\rho$ is \emph{not} a dynamical realization, and let $\rho_2$ be the dynamical realization of the circular order on $G$ given by the orbit of $x_0$ under $\rho(G)$.  Proposition \ref{semiconj prop} then gives the existence of a continuous degree one monotone map $f: S^1 \to S^1$ such that $f(x_0) = x_0$ and $\rho_2(g) \circ f = f \circ \rho(g)$.  If $f$ is a homeomorphism, then $\rho$ is a dynamical realization.  Thus, $f$ must not be injective.  

Given that $f$ is not injective, the construction of $f$ in the proof of Proposition \ref{semiconj prop} implies that there is a connected component $J$ of the complement of the closure of $\rho_2(G)(x_0)$ such that $f(J)$ is a singleton, say $y$.  Since the exceptional minimal set $K$ is contained in the closure of $\rho_2(G)(x_0)$, it follows that $J$ lies inside a connected component of $S^1\setminus K$. After conjugacy, we may find such an interval $J$ such that $J \subset I$. But then, since by assumption $\rho$ restricts to a dynamical realization of the stabilizer $\stab_G(I)$, Lemma \ref{navas lemma} provides a continuous surjective map $\hat f :I\to I$ such that $\rho(g)\circ \hat f = \hat f \circ \rho_2(g)$ for every $g$ fixing $I$. But $\hat f$ is local inverse of $f$, so it cannot be continuous. This provides the desired contradiction.
\end{proof}


\subsection{Illustrative non-examples}

We give some examples to show that Theorem \ref{characterization thm} does not hold when $\rho$ is assumed to have a related or slightly weaker form of rigidity.   

\begin{example}  \label{ex surfaces}
Let $G$ be the fundamental group of a closed surface $\Sigma_g$ of genus $g \geq 2$, and $\rho: G \to \PSL(2,\R) \subset \Homeo_+(S^1)$ the homomorphism arising from a hyperbolic structure on $\Sigma_g$. This is an embedding of $G$ into $\PSL(2,\R)$ as a cocompact lattice.  
Let $x_0 \in S^1$ be a point with trivial stabilizer.  Such a point exists, as there are only countably many points with nontrivial stabilizer, each one an isolated fixed point of an infinite cyclic subgroup of $G$.   Then the orbit of $x_0$ induces a circular order, say $c_0$, on $G$, and since $\rho$ is minimal Proposition \ref{semiconj prop} implies that $\rho$ is a dynamical realization of $c_0$ with basepoint $x_0$.  By Corollary \ref{dense orbit cor}, this is not an isolated circular order.  However, the action does have a form of rigidity, which we now describe.  

The main theorem of Matsumoto \cite{Matsumoto invent}, together with minimality of $\rho$ implies that there exists a neighborhood $U$ of $\rho$ in $\Hom(G, \Homeo_+(S^1))$ such that, for all $\rho'$ in $U$, there exists a continuous, degree one monotone map $h$ such that 
$h \circ \rho' = \rho \circ h$.
This is quite similar to our ``strong rigidity", except that here $h$ will not generally fix the basepoint $x_0$  

To see directly that $c_0$ is not isolated, one can change the basepoint as in Remark \ref{basepoint rk} to approximate $c_0$ by its conjugates.   This corresponds to conjugating $\rho$ by some small homeomorphism $h$ that does not fix $x_0$.  
\end{example}

One can modify the group $G$ in the example above to give a dynamical realization $\rho$ of a circular order $c$ with \emph{nontrivial linear part} (and isolated left order on the linear part!) and so that $\rho$ still has some form of rigidity --  but where $c$ fails again to be an isolated circular order due to basepoint considerations.   Here is a brief sketch of one such construction.   

\begin{example}
Let $G = \pi_1(\Sigma_g) \times \Z$.   Define $\rho: G \to \Homeo_+(S^1)$ by starting with the action of $\pi_1(\Sigma_g)$ defined above, then ``blowing up" each point in the orbit of $x_0$, replacing it with an interval (i.e. performing the Denjoy trick), and inserting an action of $\Z$ by translations supported on these intervals, that commutes with the action of $\pi_1(\Sigma_g)$.    Corollary \ref{min set 2} shows that these are dynamical realizations of the circular order on $G$ obtained from the orbit of $x_0$.  

Moreover, much like in the case above, one can argue from Matsumoto's result that any nearby action $\rho'$ of $G$ on $S^1$ is \emph{semi-conjugate} to $\rho$, in the sense that both can be collapsed to a common minimal action where the $\Z$ factor acts trivially.  However, $\rho$ is not an isolated circular order; one can produce arbitrarily nearby orders by performing the same construction, but blowing up the orbit of a nearby point instead of $x_0$, and choosing the basepoint there. 
\end{example}

Our last example concerns circular orders on solvable groups.

\begin{example}\label{ex solvable}  Consider the Baumslag-Solitar groups $BS(1,2)=\langle a,b\mid aba^{-1}=b^2\rangle$ acting by $\rho(a):x\mapsto 2x$ and $\rho(b):x\mapsto x+1$ for $x\in \R\cup\{\infty\}=S^1$. As in Example \ref{ex surfaces}, $\rho$ is a dynamical realization of any circular order induced from a point $x_0$ having trivial stabilizer.  For this, one can take $x_0$ to be any point in $\R \setminus \Q$. 

We claim that the representation $\rho$ is rigid, in the sense that all nearby actions of $BS(1,2)$ on the circle are semi-conjugate to it.   Indeed, from the invariance under conjugation of rotation number,  $b$ always has a fixed point in $S^1$. Furthermore, since the fixed points of $\rho(a)$ are hyperbolic, and therefore stable, for any representation $\rho'$ close to $\rho$, the element $\rho'(a)$ has a fixed point.  By iterating a fixed point of $\rho'(b)$ under $\rho'(a^{-1})$, we obtain a global fixed point of  $\rho'$. This implies that, up to semi-conjugacy, we can view $\rho'$ as an action of $BS(1,2)$ on $\R \cup\{\infty\}$ fixing $\infty$. Moreover, in this model, $\rho'(a)$ has a  fixed point, say $p$, on $\R$ and we have $\rho'(b)(p) > p$.  But now, in \cite{Rivas jgt} it is shown that $BS(1,2)$ has only four semi-conjugacy classes of actions on the line. Two of them giving actions where $a$ has no fixed points, and the other two are affine actions: one in which $b$ is a negative translation and the other in which $b$ is a positive translation. This implies that $\rho'$ is semi-conjugate to $\rho$. However, the ordering on $BS(1,2)$ induced from $x_0$ is not isolated, it is approximated by its conjugates in the same way as in Example \ref{ex surfaces}.

Similar arguments can be applied to many orderings on (not necessarily affine) solvable groups. See \cite{Rivas-Tessera}. 

\end{example}

\section{Circular orders on free groups} 
\label{F_n section}

In this section, we use the results of Section \ref{spaces sec} to show that there are infinitely many nonconjugate circular orders on free groups of even rank, and characterize the dynamical realizations of isolated circular orders on free groups generally, proving Theorems \ref{f_2_iso_thm} and  \ref{domains thm}. 

We start with a definition related to the conditions in Theorem \ref{domains thm}.    

\begin{definition} 
Let  $a_1, a_2, ... , a_n \in  \Homeo_+(S^1)$.  We say these elements have \emph{ping-pong dynamics} if there exist pairwise disjoint closed sets $D(a_i)$ and $D(a_i^{-1})$ such that, for each $i$, we have 
$$\rho(a_i) \big( S^1 \setminus D(a_i^{-1}) \big) \subset D(a_i).$$
\end{definition}

We call any such sets $D(a_i)$ and $D(a_i^{-1})$ satisfying $a_i \big( S^1 \setminus D(a_i^{-1}) \big) \subset D(a_i)$ \emph{attracting domains} for $\rho(a_i)$ and $\rho(a_i^{-1})$ respectively, and 
use the notation $D(a_i^{\pm 1})$ to denote $D(a_i) \cup D(a_i^{-1})$.    These attracting domains need not be connected.   In this case, we use the notation $D_1(s), D_2(s), ...$ for the connected components of $D(s)$.   Note that the definition of ping-pong dynamics implies that for each $s \in \{a_1^{\pm 1}, ..., a_n^{\pm 1}\}$, and for each domain $D_k(t)$ with $t \neq s^{-1}$, there exists a unique $j$ such that $s(D_k(t))$ lies in the interior of $D_j(s)$.  

The terminology is motivated by the following lemma, a version of the classical ping-pong lemma.  

\begin{lemma}[Ping-pong] \label{lem pingpong}
Let $a_1, a_2, ... , a_n  \in \Homeo_+(S^1)$ have ping-pong dynamics.     Then $a_1, ... , a_n$ generate a free group.  More precisely, for any $x_0$ not in the interior of an attracting domain $D(a_i^{\pm 1})$, the orbit of $x_0$ is free and its cyclic order is completely determined by: \emph{i)} the cyclic order of the sets $D_j(s)$ and $\{x_0\}$, for $s \in \{a_1^{\pm 1}, ... , a_n^{\pm 1}\}$, and \emph{ii)} the collection of \emph{containment relations}
$$s(D_k(t)) \subset D_j(s), $$
$$s(x_0) \in D_j(s)$$  
\end{lemma}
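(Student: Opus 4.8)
The plan is to prove the two assertions of Lemma \ref{lem pingpong} in sequence: first that $a_1, \ldots, a_n$ generate a free group, and then that the cyclic order on the orbit of any admissible basepoint $x_0$ is determined by the combinatorial data (i) and (ii). The freeness is the classical ping-pong conclusion, so the main content is the second statement about cyclic orders.

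For freeness, I would argue as in the standard ping-pong lemma. Let $w = s_1 s_2 \cdots s_k$ be a reduced word in the generators $a_i^{\pm 1}$, with each $s_j \in \{a_1^{\pm 1}, \ldots, a_n^{\pm 1}\}$ and $s_{j+1} \neq s_j^{-1}$. Pick a point $x_0$ not in the interior of any $D(a_i^{\pm 1})$. Applying the generators one at a time from the right, the defining inclusion $s(S^1 \setminus D(s^{-1})) \subset D(s)$ guarantees inductively that $s_j s_{j+1} \cdots s_k(x_0)$ lands in the interior of $D(s_j)$: at each step the point being moved lies in $D(s_{j+1})$, which is disjoint from $D(s_j^{-1})$ because $s_{j+1} \neq s_j^{-1}$, hence in $S^1 \setminus D(s_j^{-1})$, so $s_j$ sends it into $D(s_j)$. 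Therefore $w(x_0) \in D(s_1)$, which does not contain $x_0$, so $w(x_0) \neq x_0$ and $w \neq \id$. This simultaneously shows the orbit of $x_0$ is free (the word-length map $w \mapsto w(x_0)$ is injective on reduced words, by the same disjointness argument applied to two distinct reduced words and their common reduction).

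For the statement about cyclic orders, the key observation is that the inclusion data propagates the location of orbit points through the nested attracting domains. I would track, for each reduced word $w$, which connected component $D_j(s_1)$ contains $w(x_0)$, using the containment relations (ii): writing $w = s_1 w'$, the point $w'(x_0)$ lies in a specific component $D_k(t)$ (or is $x_0$ itself when $w' = \id$), determined inductively, and then the relation $s_1(D_k(t)) \subset D_j(s_1)$ (respectively $s_1(x_0) \in D_j(s_1)$) pins down the component $D_j(s_1)$ containing $w(x_0)$. Thus the ``coarse location'' of every orbit point — which component of which domain it sits in — is computable from the combinatorial data alone. To recover the full cyclic order of three orbit points, one compares their coarse locations using the given cyclic order (i) of the sets $D_j(s)$ and $\{x_0\}$; when two points share the same innermost component, one strips off the common prefix of generators (which preserves cyclic order since each $\rho(a_i)$ is orientation-preserving) and compares the shorter words recursively.

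The main obstacle I expect is the recursive comparison when orbit points are not separated at the coarsest level, i.e., when three points lie in the same component $D_j(s)$. Here one must argue that applying $s^{-1}$ (an orientation-preserving homeomorphism, hence cyclic-order-preserving) reduces the comparison to shorter words, and that this recursion terminates and is driven purely by the data (i) and (ii). The subtle point is to confirm that the map $w \mapsto (\text{coarse location of } w(x_0))$, refined by this stripping procedure, is a genuine cyclic-order isomorphism onto the combinatorially-defined order, and in particular that the orientation-preserving hypothesis $a_i \in \Homeo_+(S^1)$ is used exactly to guarantee that $\ord(s(p), s(q), s(r)) = \ord(p, q, r)$ at each stripping step. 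Once this invariance is in hand, a clean induction on the maximal word length of the three points completes the argument, showing the cyclic order is a function of (i) and (ii) alone.
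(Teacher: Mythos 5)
Your overall strategy --- induction on word length, tracking the ``coarse location'' (which component $D_j(s)$ contains $w(x_0)$) via the containment data, and stripping common initial letters using the fact that the generators preserve cyclic order --- is the same skeleton as the paper's proof, and your separate freeness argument is fine. But there is a genuine gap at exactly the point your sketch glosses over, and you have misidentified where the difficulty lies. The case of \emph{three} points in one component $D_j(s)$, which you call the main obstacle, is actually the easy case: a point $w(x_0)$ of the orbit lies in a domain of the initial letter of $w$, and the domains are pairwise disjoint, so all three words must then begin with $s$; stripping $s$ from all three makes every word strictly shorter. The real difficulty is the mixed case, where \emph{exactly two} of the three points, say $w_1(x_0)$ and $w_2(x_0)$ with $w_i = s v_i$, lie in the same component $D_j(s)$, while $w_3(x_0)$ lies elsewhere. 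Here your recipe ``strip the common prefix of the two sharing words and compare the shorter words recursively'' does not parse: cyclic order is a ternary relation, so the recursive comparison needs a third point, and you never say what it is. If you keep $w_3$ in the triple and apply $s^{-1}$ to all three points, the third word $s^{-1}w_3$ is \emph{longer} than $w_3$ (it is already reduced, since $w_3$ does not begin with $s$), so your stated induction on the maximal word length of the triple does not decrease, and the recursion as described need not terminate.

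The missing idea, which is the crux of the paper's inductive step, is an interval-separation observation: if $p, q$ lie in an interval $I$ and $r, r'$ both lie outside $I$, then $\ord(p,q,r) = \ord(p,q,r')$. Since $w_1(x_0), w_2(x_0) \in D_j(s)$ while both $w_3(x_0)$ and the basepoint $x_0$ lie outside $D_j(s)$ (the former because its word begins with a letter $t \neq s$, the latter by hypothesis), one may replace the third point by the basepoint: $\ord(w_1(x_0), w_2(x_0), w_3(x_0)) = \ord(w_1(x_0), w_2(x_0), x_0)$. Now strip $s$ to get $\ord(v_1(x_0), v_2(x_0), s^{-1}(x_0))$, a triple of words of length at most $k$, and the induction on maximal word length goes through. (Alternatively, your strip-all-three recursion can be repaired: after stripping, the third point always lies in $D(s^{-1})$, which by reducedness of $w_1$ and $w_2$ is disjoint from the domains containing $v_1(x_0)$ and $v_2(x_0)$, and the lengths of the two sharing words strictly decrease --- but then the induction measure must be the lengths of those two words, not the maximum over the triple, contrary to what you propose.) Some version of this step must be supplied; without it the central case of the induction is unproved.
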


\begin{proof}[Proof of Lemma \ref{lem pingpong}]
Let $w_1$, $w_2$ and $w_3$ be distinct reduced words in the letters $a_i$ and $a_i^{-1}$.    
We need to show that the cyclic order of the triple $w_1(x_0), w_2(x_0), w_3(x_0)$ is well defined (i.e. $x_0$ has trivial stabilizer) and completely determined by the cyclic order of, and containment relations among, $x_0$ and the connected components of the attracting domains.  

We proceed by induction on the maximum word length of $w_i$.   For the base case, if each $w_i$ is either trivial or a generator, then either $w_i(x_0) = x_0$ (trivial case), or $w_i(x_0) \in D_j(w_i)$, for some $j$ determined by the containment relations.  Since the  $w_i$ are distinct, no two of the points $w_1(x_0), w_2(x_0)$ and $w_3(x_0)$ lie in the same domain $D(w_i)$.  Thus, all are distinct points, and the cyclic order of the sets $D_j(s)$ and $\{x_0\}$ determines the cyclic order of the triple.  

For the inductive step, assume that for all triples of reduced words $w_i$ of length at most $k$, the cyclic order of the triple $w_1(x_0), w_2(x_0), w_3(x_0)$ is determined, and assume also that the points $w_i(x_0)$ have $w_i(x_0) \in D_j(w_i)$ for some $j$ determined by $w_i$.   (For completeness and consistency, consider $\{x_0\}$ to be the attracting domain for the empty word).  
Let $w_1, w_2, w_3$ be reduced words of length at most $k+1$.  

Write $w_i = s_i v_i$, where $s_i \in \{a_1^{\pm 1}, ... a_n^{\pm 1}\}$, so $v_i$ is a word of length at most $k$.  (If $w_i$ is empty, skip this step, and set $v_i$ to be the empty word.)  We have $v_i(x_0) \in D_k(t_i)$ for some $k$, and since $w_i$ is a reduced word, $t_i \neq s_i$.  By inductive hypothesis, the sets $D_k(t_i)$ and cyclic order of the points $v_i(x_0)$ are known.  
Finally, we have $w_i(x_0) = s_iv_i(x_0) \in s_i(D_k(t_i))$, and $s_i(D_k(t_i)) \subset D_j(s_i)$ for some $j$ given by the containment relations.    

If these sets $D_j(s_i)$ are distinct, then we are done since their cyclic order is known.  We may also ignore the case where all $w_i$ have the same initial first letter $s$, since the cyclic order of the triple $s^{-1}w_1(x_0), s^{-1}w_2(x_0), s^{-1}w_3(x_0)$ agrees with that of $w_1(x_0), w_2(x_0), w_3(x_0)$.   So we are left with the case where exactly two of the three domains $D_j(s_i)$ containing $w_i(x_0)$ agree.  For concreteness, assume $w_1$ and $w_2$ start with $s$ and $w_3$ does not, and $w_1(x_0)$ and $w_2(x_0)$ lie in $D_j(s)$.  

Consider the shorter reduced words $v_1 = s^{-1}w_1$, $v_2 = s^{-1}w_2$ and $s^{-1}$.  By hypothesis, the cyclic order of the points $v_1(x_0), v_2(x_0), s^{-1}(x_0)$ is determined.  This order agrees with the order of the triple $w_1(x_0), w_2(x_0), x_0$.     As $x_0 \notin D_j(s)$ and $w_3(x_0) \notin D_j(s)$, this order is the same as that of $w_1(x_0), w_2(x_0), w_3(x_0)$.  Thus, the cyclic order of the triple $w_1(x_0), w_2(x_0), w_3(x_0)$ is determined by this initial configuration.  

\end{proof} 

The most basic and familiar example of ping-pong dynamics is as follows.  
\begin{example}  \label{pingpong const}
Let $D(a), D(a^{-1}), D(b),$ and $D(b^{-1})$ be disjoint closed intervals in $S^1$.   Let $a$ and  $b $ be orientation-preserving homeomorphisms of the circle such that 
\begin{align} \label{domain eq}
\begin{split}
&a(S^1\setminus D(a^{-1}))\subset D(a),\\
&b(S^1\setminus D(b^{-1}))\subset D(b).
\end{split} 
\end{align}  
By Lemma \ref{lem pingpong}, any point $x_0$ in the complement of the union of these attracting domains induces a circular order on $F_2$ by 
$c(w_1,w_2,w_3)= \ord(w_1(x_0),w_2(x_0),w_3(x_0)).$
\end{example}

\begin{remark} \label{separating rk}
Note that there are two dynamically distinct cases in the construction above: either we can take $D(b)$ and $D(b^{-1})$ to lie in different connected components of $S^1 \setminus \big( D(a) \cup D(a^{-1}) \big)$, or to lie in the same connected component of $S^1 \setminus \big( D(a) \cup D(a^{-1}) \big)$. 
\end{remark}

\begin{convention}
For the remainder of this section, whenever we speak of actions with ping-pong dynamics, we assume that each attracting domain $D(s)$ has \emph{finitely many} connected components.  
\end{convention}

The next Proposition shows that ping-pong dynamics come from isolated circular orders, proving one direction of Theorem \ref{domains thm}.

\begin{proposition}  \label{ping pong isolated}
Let $\rho: F_n \to \Homeo_+(S^1)$ be the dynamical realization of a circular order $c$, with basepoint $x_0$.  If $\rho$ has ping-pong dynamics and $x_0 \in S^1 \setminus \bigcup_i D(a_i^{\pm 1})$, then $c$ is isolated in $\CO(F_n)$. 
\end{proposition}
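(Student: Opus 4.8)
The plan is to use Theorem~\ref{characterization thm}: to show $c$ is isolated, it suffices to verify that its dynamical realization $\rho$ is rigid in the strong sense, i.e. every $\rho'$ sufficiently close to $\rho$ admits a continuous degree~$1$ monotone map $h$ fixing $x_0$ with $h \circ \rho(g) = \rho'(g) \circ h$ for all $g \in F_n$. The key observation is that ping-pong dynamics is an \emph{open} condition: since the attracting domains $D(s)$ are finite unions of intervals and the containments $\rho(s)(S^1 \setminus D(s^{-1})) \subset D(s)$ are strict (the images land in the interiors), there is a neighborhood $U$ of $\rho$ in $\Hom(F_n, \Homeo_+(S^1))$ such that for every $\rho' \in U$, the generators $\rho'(a_1), \dots, \rho'(a_n)$ still have ping-pong dynamics with respect to the \emph{same} domains $D(s)$.

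The heart of the argument is then to show that for $\rho' \in U$, the circular order induced by the orbit of $x_0$ under $\rho'$ agrees with $c$; the map $h$ of Theorem~\ref{characterization thm} will follow from Proposition~\ref{semiconj prop}. First I would shrink $U$ so that, in addition to preserving the ping-pong containments, $\rho'$ preserves the cyclic order of the finitely many connected components $D_j(s)$ together with $\{x_0\}$, as well as all the finitely many containment relations $s(D_k(t)) \subset D_j(s)$ and $s(x_0) \in D_j(s)$; this is again an open condition because only finitely many intervals and incidences are involved. By Lemma~\ref{lem pingpong}, the cyclic order of the entire orbit of $x_0$ — equivalently, the induced circular order on $F_n$ — is \emph{completely determined} by exactly this combinatorial data (the cyclic order of the $D_j(s)$ and $\{x_0\}$, and the containment relations). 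Since $\rho$ and $\rho'$ share this data, they induce the same circular order $c$ on $F_n$.

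Finally, to invoke Proposition~\ref{semiconj prop} I must check that $x_0$ has trivial stabilizer under $\rho'$, but this is immediate from Lemma~\ref{lem pingpong}, which guarantees the orbit of $x_0$ is free whenever the ping-pong hypotheses hold and $x_0$ lies outside the interiors of the attracting domains. Thus $\rho'$ lies in $H(x_0)$ and induces the order $c$, so Proposition~\ref{semiconj prop} yields the required degree~$1$ monotone semi-conjugacy $h$ fixing $x_0$. This verifies the strong rigidity hypothesis of Theorem~\ref{characterization thm}, and hence $c$ is isolated. The main obstacle is the bookkeeping in the second step: one must confirm that the finite combinatorial invariants appearing in Lemma~\ref{lem pingpong} are genuinely stable under small perturbations, and that nothing in the induction of Lemma~\ref{lem pingpong} depends on data beyond this finite list — so that matching the data forces the orders to coincide exactly rather than merely on a finite set.
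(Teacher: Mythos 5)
Your strategy coincides with the paper's: establish stability of the ping-pong configuration under perturbation, apply Lemma \ref{lem pingpong} to conclude that every nearby action induces the same cyclic order on the orbit of $x_0$, then finish by continuity of dynamical realization. (You route the conclusion through Theorem \ref{characterization thm} and Proposition \ref{semiconj prop}, while the paper invokes Lemma \ref{lem continuity} directly; since that direction of Theorem \ref{characterization thm} is itself proved from Lemma \ref{lem continuity} and Proposition \ref{semiconj prop}, this is only a difference of packaging.)

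However, your key openness step has a genuine gap: it is \emph{not} true in general that every $\rho'$ close to $\rho$ has ping-pong dynamics with respect to the \emph{same} domains $D(s)$. The justification you give --- that images land in interiors --- only says that each point of the open set $S^1 \setminus D(s^{-1})$ has image in the interior of $D(s)$; it does not give a uniform lower bound on the distance from $\rho(s)(x)$ to the complement of $D(s)$ as $x$ ranges over $S^1 \setminus D(s^{-1})$. Indeed, $\rho(s)$ maps the compact set $S^1 \setminus \overset{\circ}{D}(s^{-1})$ into $D(s)$ but may send endpoints of $D(s^{-1})$ to endpoints of $D(s)$. This boundary-to-boundary behavior actually occurs for dynamical realizations: it is built into the domains constructed in Step 3 of the paper's proof of the converse direction of Theorem \ref{domains thm}, where $D(t^{-1})$ is extended exactly to the interval starting at $\rho(t^{-1})(p)$ with $p$ an endpoint of the extended $D(t)$. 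In such a configuration, arbitrarily small perturbations $\rho'$ of $\rho$ admit points $x \in S^1 \setminus D(s^{-1})$ near $\partial D(s^{-1})$ with $\rho'(s)(x) \notin D(s)$, so the ping-pong condition fails for the original domains and your neighborhood $U$ does not exist as claimed. The missing idea is the paper's $\epsilon$-enlargement: replace each component $D_k(s)$ by a closed $\epsilon$-enlargement $D'_k(s)$, with $\epsilon$ small enough that the $D'_k(s)$ remain pairwise disjoint, keep the same cyclic order together with $x_0$, and satisfy the same containment relations. Then for $\rho'$ that is $\epsilon$-close to $\rho$ one has $\rho'(s)\bigl(S^1 \setminus D'(s^{-1})\bigr) \subset \rho'(s)\bigl(S^1 \setminus D(s^{-1})\bigr) \subset D'(s)$, because $\rho(s)\bigl(S^1 \setminus D(s^{-1})\bigr) \subset D(s)$ and an $\epsilon$-perturbation stays inside the enlargement. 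With $D'(s)$ in place of $D(s)$, the rest of your argument --- stability of the finite combinatorial data, Lemma \ref{lem pingpong}, triviality of the stabilizer of $x_0$, and Proposition \ref{semiconj prop} feeding into Theorem \ref{characterization thm} --- goes through verbatim.
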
  

\begin{proof}
Let $\rho: F_n \to \Homeo_+(S^1)$ be the dynamical realization of a circular order $c$, with basepoint $x_0$, and assume that $\rho$ has ping-pong dynamics.   
Following the convention above, let $D_1(s), D_2(s), ... $ denote the (finitely many) connected components of $D(s)$, so 
 for each $s \in \{a_1^{\pm 1}, ... a_n^{\pm 1}\}$, each $t \neq s^{-1}$ and each connected component $D_k(t)$, there exists $j$ and $i$ such that  we have containment relations of the form
\begin{equation} \label{ping eq}
\rho(s)(D_k(t)) \subset \overset{\circ}D_j(s) \\
\text{ and } \\
\rho(s)(x_0) \in \overset{\circ}D_i(s)
\end{equation} 
Here we use the notation $\overset{\circ}D_i(s)$ for the interior of $D_i(s)$, and in this proof we will use the fact that the image lies in the interior.   

We now show these dynamics are stable under small perturbations.  To make this precise, for  each $s \in \{a_1^{\pm 1}, ... a_n^{\pm 1}\}$ and each connected component  $D_k(s)$, let $D'_k(s)$ be an $\epsilon$--enlargement of $D_k(s)$, with $\epsilon$ chosen small enough so that $D_k(s) \subset \overset{\circ}{D'}_k(s)$, but all the domains $D'_k(s)$ remain pairwise disjoint.  
Now if $\rho'(s)$ is sufficiently close to $\rho(s)$, then we will have 
$$\rho'(s)(S^1 \setminus D'(s^{-1})) \subset \rho'(s)(S^1 \setminus D(s^{-1})) \subset D'(s).$$  Moreover, as in \eqref{ping eq}, we will also have
$\rho'(s)(D'_k(t)) \subset \overset{\circ}{D'}_j(s)$ and 
$\rho'(s)(x_0) \in \overset{\circ}{D'}_i(s)$ (with the same indices).  

Thus, there exists a neighborhood $U$ of $\rho$ in $\Hom(\Gamma, \Homeo_+(S^1))$ such that any $\rho' \in U$ is a ping-pong action for which the sets $D'_j(s)$ may be taken as the connected components of attracting domains.  Moreover, these components are in the same cyclic order as the components $D_j(s)$, the containments from equation \eqref{ping eq} are still valid, and $x_0$ is in the same connected component of the complement of the domains.  
Fix such a neighborhood $U$ of $\rho$.    

By Lemma \ref{lem pingpong}, the cyclic order of the orbit of $x_0$ depends only on the cyclic order of the domains $D'_j(a_i^{\pm 1})$ and $x_0$, so for each $\rho' \in U$, the cyclic order of $\rho'(F_n)(x_0)$ agrees with that of $\rho(F_n)(x_0)$. 
Now by Lemma \ref{lem continuity}, there exists a neighborhood $V$ of $c$ in $\CO(F_n)$ such that any order $c' \in V$ has a dynamical realization $\rho'$ with basepoint $x_0$ such that $\rho' \in U$.  As we observed above, the order of the orbit $\rho_c(F_2)(x_0)$ agrees with that of $\rho(F_2)(x_0)$, and so the two circular orders agree.
\end{proof} 

Although much less obvious, the converse to Proposition \ref{ping pong isolated} is also true: 

\begin{proposition} \label{iso gives ping-pong}
Suppose $\rho$ is a dynamical realization  of an isolated circular order on $F_n = \langle a_1, a_2, ..., a_n \rangle$.  Then there exist 
disjoint closed sets $D(s) \subset S^1$ for every $s \in \{a_1^{\pm1}, ... ,a_n^{\pm1}\}$, each consisting a finite union of intervals, and each disjoint from the basepoint, such that $\rho(s) (S^1 \setminus D(s^{-1})) \subset D(s)$ holds for all $s$.  
\end{proposition}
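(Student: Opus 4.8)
The plan is to run the forward direction of Theorem \ref{characterization thm} and Proposition \ref{ping pong isolated} in reverse: from isolation, extract enough dynamical rigidity to exhibit attracting domains. First I would record the tautological combinatorics of the orbit. Writing $s$ for a letter in $\{a_1^{\pm1},\dots,a_n^{\pm1}\}$ and
$$A(s):=\{\rho(g)(x_0): g\neq\id \text{ is a reduced word with first letter } s\},$$
the fact that $x_0$ has trivial stabilizer gives a disjoint decomposition of the orbit $\rho(F_n)(x_0)=\{x_0\}\sqcup\bigsqcup_s A(s)$. Since $sg$ is reduced and begins with $s$ precisely when $g$ does not begin with $s^{-1}$, one checks directly that
$$\rho(s)\big(\rho(F_n)(x_0)\setminus A(s^{-1})\big)=A(s)$$
for every $s$. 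Thus the desired ping-pong relation \emph{already holds on the orbit}, with no use of isolation; the entire content of the proposition is to thicken the sets $A(s)$ to disjoint closed sets $D(s)$, each a finite union of intervals, so that $\rho(s)(S^1\setminus D(s^{-1}))\subset D(s)$ on all of $S^1$.

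Next I would locate these sets relative to the minimal set. By Theorem \ref{isolated linear part} the realization $\rho$ is not minimal, so by Lemma \ref{trichotomy} it has either a finite orbit or an exceptional minimal set $K$, and $x_0$ lies in a complementary gap $I_0$ whose stabilizer is the nontrivial linear part $H$, carrying an isolated left order. The closures $\overline{A(s)}$ meet this minimal set, and the natural candidate is to take $D(s)$ to be a finite union of intervals containing $\overline{A(s)}$. Granting for the moment that the family $\{\overline{A(s)}\}_s$ can be \emph{separated by pairwise disjoint finite unions of intervals}, the attracting inclusion $\rho(s)(S^1\setminus D(s^{-1}))\subset D(s)$ would then follow: it holds on the orbit by the displayed identity, and extends to the closures and across the gaps by continuity, using the gap description of Lemma \ref{min set condition} and Corollary \ref{min set 2} to control the non-orbit points (after enlarging each $D(s)$ slightly so that images land in the interior, exactly as in the stability argument of Proposition \ref{ping pong isolated}).

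The real content is therefore to establish two separation facts, and this is precisely where isolation must be used: \emph{(i)} the sets $\overline{A(s)}$ are pairwise disjoint, and \emph{(ii)} no two of them are infinitely interleaved, so that finitely many arcs suffice to separate them. For \emph{(i)}, if $\overline{A(s)}$ and $\overline{A(t)}$ met in interleaved fashion at a point of the minimal set, I would perturb the generators $a_i$ independently, using $\Hom(F_n,\Homeo_+(S^1))\cong\Homeo_+(S^1)^n$, by a homeomorphism supported near that point; strong rigidity of $\rho$ (Theorem \ref{characterization thm}) forces a degree-one monotone $h$ fixing $x_0$ and intertwining $\rho$ with the perturbation, and one shows the required commutation of $h$ with the unperturbed generators fails at an interleaving point, producing a distinct nearby order and contradicting isolation via Lemma \ref{lem continuity}. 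The genuinely hard part is \emph{(ii)}: ruling out infinitely many interleaved clusters. Here I would use that isolation supplies a finite set $F\subset F_n$ determining $c$, and argue that the cyclic pattern of $\rho(F)(x_0)$, together with the stability of the ping-pong inclusions under perturbation, bounds the number of components needed; heuristically, infinite interleaving would furnish infinitely many independent order-changing perturbations undetectable on $F$. I expect this finiteness bound to be the main obstacle, and anticipate that making it rigorous requires an induction that tracks the linear part $H$ and its own isolated left order, reducing the combinatorial complexity at each step.
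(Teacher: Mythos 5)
Your setup is correct and matches the paper's strategy: the orbit-level identity $\rho(s)\bigl(\rho(F_n)(x_0)\setminus A(s^{-1})\bigr)=A(s)$ is indeed free of charge, and the place where isolation enters is exactly your point \emph{(i)}, the pairwise disjointness of the closures $\overline{A(s)}$. This is precisely the Claim in the paper's proof, established there by a perturbation argument close in spirit to yours but more hands-on: given arbitrarily close orbit points $\rho(sw)(x_0)$ and $\rho(tv)(x_0)$ with $s\neq t$, one replaces $\rho(s)$ by $h\rho(s)$ where $h$ is supported on a tiny interval chosen to avoid every image of $x_0$ under the initial strings of $w$ and $v$; this reverses the orientation of one orbit triple while leaving the order unchanged on any prescribed finite set, and order completion then yields a nearby distinct circular order. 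Your version, routed through the strong rigidity of Theorem \ref{characterization thm}, would need exactly this same care about supports (otherwise the perturbed action may simply be semi-conjugate to $\rho$ by a map fixing $x_0$), and you do not supply it; still, the idea is the right one.

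The genuine problems are in the second half. First, your \emph{(ii)} is not hard at all, so the induction you anticipate is unnecessary: once the finitely many compact sets $\overline{A(s)}$ are pairwise disjoint, they are at pairwise distance at least some $\delta>0$, and the closed $\delta/3$-neighborhood of a compact subset of $S^1$ is automatically a finite union of closed intervals (each component contains an arc of length $2\delta/3$); the same $\delta$ bounds below the length of any ``mixed'' gap, so there are only finitely many of those as well. Second, and more seriously, your assertion that the attracting inclusion then ``extends across the gaps by continuity'' is false, and this is where the real remaining work of the proposition lies. The inclusion does extend to the orbit closure, and a gap of $\overline{F_n(x_0)}$ with both endpoints in the same $\overline{A(t)}$, $t\neq s^{-1}$, causes no trouble (its $\rho(s)$-image is a gap with both endpoints in $\overline{A(s)}$). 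But consider a mixed gap $I=(p,q)$ with $p\in\overline{A(t)}$ and $q\in\overline{A(s^{-1})}$: the portion of $I$ outside $D(s^{-1})$ must be mapped by $\rho(s)$ into $D(s)$, yet $\rho(s)(I)$ is again a mixed gap, with one endpoint in $\overline{A(s)}$ and the other at $x_0$ or in some $\overline{A(u)}$ with $u\neq s$, so it is \emph{not} contained in any enlargement of $\overline{A(s)}$ that stays disjoint from the other domains. Handling these finitely many mixed gaps is the paper's Step 3: it first shows their endpoints lie in the orbit itself (using Theorem \ref{isolated linear part} and the fact that free groups of rank $\geq 2$ have no isolated left orders, so the linear part is $\Z$), and then runs an iterative procedure that cuts each mixed gap at chosen points and coherently distributes the resulting subintervals, together with their images under $\rho(s^{-1})$ and $\rho(t^{-1})$, among the domains $D(s^{\pm1})$, $D(t^{\pm1})$ so that the ping-pong inclusions close up after finitely many steps. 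Nothing in your proposal substitutes for this construction, so as written the proof is incomplete precisely where the conclusion of the proposition is actually achieved.
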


This proposition gives the other direction of Theorem \ref{domains thm}.  

\begin{proof}[Proof of Proposition \ref{iso gives ping-pong}]
Assume that $\rho$ is the dynamical realization of an isolated circular order on $F_n$, and let $x_0$ be the basepoint for $\rho$. 
We start by proving the following claim.   To simplify notation, here $F_n(x_0)$ denotes the orbit of $x_0$ under $\rho(F_n)$, and $\overline{F_n(x_0)}$ denotes its closure.  

\begin{claim*} For any $y \in \overline{F_n(x_0)} \setminus\{x_0\}$, there exists a neighborhood $U$ of $y$ in $S^1$, and $s \in \{a_1^{\pm1}, ... , a_n^{\pm1} \}$ such that, for each point $\rho(g)(x_0) \in U \cap F_n(x_0)$, the (reduced) word $g$ in $\{a_1^{\pm1}, ... , a_n^{\pm1}\}$ has the same initial letter $s$. 
\end{claim*}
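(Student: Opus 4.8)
My plan is to argue by contradiction. Suppose the claim fails at some $y \in \overline{F_n(x_0)}$. I may assume $y$ is not isolated in $\overline{F_n(x_0)}$, since an isolated point of the orbit closure lies in the orbit itself, and then any sufficiently small $U$ meets $F_n(x_0)$ only in the single point $y$, making the claim trivial there. Failure of the claim means that for every neighborhood of $y$ and every letter $s$ there is an orbit point in that neighborhood whose reduced word does \emph{not} begin with $s$; as there are only $2n$ letters, this produces two distinct letters $s \neq t$ and sequences $g_i(x_0) \to y$ and $h_j(x_0) \to y$ with each $g_i$ beginning with $s$ and each $h_j$ beginning with $t$. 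By Theorem \ref{isolated linear part} the realization $\rho$ is moreover non-minimal, so $y$ lies in or beside the exceptional minimal set; but I will only use the accumulation of these two families at $y$.

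The heart of the argument is to turn this configuration into a nearby but \emph{distinct} circular order, contradicting isolation. Since $c$ is isolated, fix a finite ball $B_N \subset F_n$ large enough that any order agreeing with $c$ on $B_N$ equals $c$. I would then choose $g = g_i$ and $h = h_j$ with $g(x_0)$ and $h(x_0)$ lying in an arbitrarily small arc $W$ about $y$, and build an arbitrarily $C^0$-small perturbation $\rho'$ of $\rho$. Because $F_n$ is free, the images of the generators may be perturbed independently, so I am free to postcompose $\rho(s)$ with a homeomorphism supported in a tiny arc while leaving the other generators untouched. The role of the distinct-first-letter hypothesis is precisely that it should let me slide $g(x_0)$ relative to $h(x_0)$, reversing the cyclic orientation of the triple $(x_0, g(x_0), h(x_0))$, while keeping $\stab(x_0)$ trivial and the cyclic order of the remaining points of $\rho'(B_N)(x_0)$ unchanged. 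Granting such a $\rho'$, continuity of the orbit map (Proposition \ref{orbit map prop}) shows that the order $c'$ induced by $\rho'$ from $x_0$ is as close to $c$ as we wish, whence $c' = c$ by isolation; yet $c'$ disagrees with $c$ on the triple $(\id, g, h)$, a contradiction. The sets $\overline{\{g(x_0) : g \text{ begins with } s\}}$ produced by the claim then become the candidate attracting domains in the proof of Theorem \ref{domains thm}.

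The main obstacle is exactly this perturbation step: decoupling the motion of $g(x_0)$ from that of $h(x_0)$. Postcomposing $\rho(s)$ disturbs every orbit point whose word routes through the letter $s$, and to drag $g(x_0)$ across $h(x_0)$ the support must sweep over the arc between them; I must therefore arrange (using the accumulation at $y$, and if necessary replacing $y$ by a translate where one family can be taken to avoid the letter $s$) that only $g(x_0)$ moves, and check that no cancellation reroutes another element of $B_N$ through the perturbed region. An alternative route, closer in spirit to Lemma \ref{min set condition} and Corollary \ref{min set 2}, is to collapse the arc separating the two interleaved families to a point and invoke Proposition \ref{semiconj prop}: if two first-letter families genuinely interleave near $y$, the resulting collapse cannot be inverted by a continuous degree-one monotone map fixing $x_0$, contradicting the fact that $\rho$ is a dynamical realization. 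Either way, the claim reduces to this single decoupling (or collapsing) statement, which is where the real work lies.
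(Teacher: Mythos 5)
Your overall strategy---perturb $\rho$ so as to flip the orientation of a single triple $(x_0, g(x_0), h(x_0))$ and contradict isolation via continuity and order completion---is exactly the paper's, but the step you yourself flag as ``the real work'' and only \emph{assume} (``Granting such a $\rho'$\dots'') is the entire content of the proof, and your proposal does not contain it. The paper fills precisely this gap with a prefix argument: writing the two nearby points as $\rho(sw)(x_0)$ and $\rho(tv)(x_0)$ with $s \neq t$, it considers the finitely many images of $x_0$ under all \emph{initial strings} of the reduced words $sw$ and $tv$, and arranges---replacing $sw$ and $tv$ by two shorter initial strings that still have distinct first letters, if necessary---that none of these intermediate points lies in the short arc between the two endpoints. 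The perturbation is then $\rho'(s) = h\rho(s)$ with $h$ supported in a neighborhood of that arc, disjoint from all intermediate points, and all other generators unchanged; an induction on prefixes gives $\rho'(w_{k'}\cdots w_1)(x_0) = \rho(w_{k'}\cdots w_1)(x_0)$ and $\rho'(v_{l'}\cdots v_1)(x_0) = \rho(v_{l'}\cdots v_1)(x_0)$ for every initial string, hence $\rho'(sw)(x_0) = h\rho(sw)(x_0)$ while $\rho'(tv)(x_0) = \rho(tv)(x_0)$. This is exactly the ``decoupling'' you ask for, and it works even when $s$ reappears inside $w$ or $v$, or when $t = s^{-1}$; there is no need to ``replace $y$ by a translate.'' Note also that the paper does not try to keep $\stab(x_0)$ trivial under $\rho'$ (which your appeal to Proposition \ref{orbit map prop} would require); instead it applies order completion (Lemma \ref{completion lemma}) to the possibly degenerate orbit order, producing a genuine circular order that agrees with $c$ on an arbitrary finite set yet disagrees on the triple $(\id, sw, tv)$.

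Your fallback ``alternative route'' should be discarded: it purports to derive the claim from the sole fact that $\rho$ is a dynamical realization (collapsing an arc and invoking Proposition \ref{semiconj prop}), but the claim is false at that level of generality, so no such argument can succeed. Indeed, if the dynamical realization of $c$ is \emph{minimal}---as happens, e.g., for orders on $F_2$ induced from a Fuchsian action with dense orbits, which are honest dynamical realizations by Proposition \ref{semiconj prop}---then the $2n$ closed sets $\overline{\{\rho(sw)(x_0)\}}$ cover $S^1$; if the claim held at every $y$, these sets would be pairwise disjoint, contradicting connectedness of $S^1$. So the claim genuinely requires isolation of $c$ (through Theorem \ref{isolated linear part}, minimal realizations are excluded for isolated orders), and your collapsing argument never uses isolation. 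The correct moral is the one implemented in the paper: isolation enters through the perturbation-plus-completion contradiction, not through any intrinsic property of dynamical realizations.
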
 

We prove this claim by contradiction.  If the claim is not true, then there is some set of the form 
$$\{ \rho(sw)(x_0) : sw \text{ is a reduced word in }a_1^{\pm1}, ..., a_n^{\pm1} \}$$ (where $s \in \{a_1^{\pm1}, ... , a_n^{\pm1}\}$ is fixed), with points of the form $\rho(tv)(x_0)$ $t \neq s$, arbitrarily close to its closure.  In other words, given any $\epsilon > 0$, we can find two points  $\rho(sw)(x_0)$ and $\rho(tv)(x_0)$, with $s \neq t$, that are distance at most $\epsilon/2$ apart.   Write $w = w_k w_{k-1}...w_1$ as a reduced word in the letters $a_1^{\pm1}, ..., a_n^{\pm1}$, and similarly write $v = v_l v_{l-1}...v_1$, and consider all the images of $x_0$ under initial strings of these words, i.e. the points $\rho(w_{k'}  ...w_1)(x_0)$ and $\rho(v_{l'}...v_1)(x_0)$ for $k' \leq k$ and $l' \leq l$.
We may assume that none of these points lie in the shorter than $\epsilon/2$-length interval between $\rho(sw)(x_0)$ and $\rho(tv)(x_0)$ -- otherwise, we may replace $sw$ and $tv$ with two of these initial strings, say $u$ and $u'$, that still have different initial letters, so that $\rho(u)(x_0)$ and $\rho(u')(x_0)$ have distance less than $\epsilon/2$ apart, and so that the images of $x_0$ under initial strings of $u$ and $u'$ do not lie in the small interval between $\rho(u)(x_0)$ and $\rho(u')(x_0)$.  

Now we modify $\rho$ by replacing $\rho(s)$ with $h \rho(s)$, where $h$ is a homeomorphism supported on a small neighborhood of the interval between $\rho(sw)(x_0)$ and $\rho(tv)(x_0)$.  Choose $h$ such that the triple $h \rho(s)\rho(w)(x_0), \rho(tv)(x_0), x_0$ has the opposite orientation from the triple $\rho(sw)(x_0), \rho(tv)(x_0), x_0$.   Additionally, we may take the interval where $h$ is supported to be small enough to not contain any point of the form $\rho(w_{k'}  ...w_1)(x_0)$ or $\rho(v_{l'}...v_1)(x_0)$.  
We leave the images of the other generators unchanged.    Call this new action $\rho'$.  

Even though $\rho(s) \neq \rho'(s)$ and $s$ may appear as a letter in $v$ or $w$, we claim that the triple $\rho'(sw)(x_0), \rho'(tv)(x_0), x_0$ does indeed have the opposite orientation from the triple $\rho(sw)(x_0), \rho(tv)(x_0), x_0$.   This is because the support of $h$ is disjoint from all points $\rho(w_{k'}  ...w_1)(x_0)$ and $\rho(v_{l'}...v_1)(x_0)$, so one can see inductively that in fact $\rho'(w_{k'}  ...w_1)(x_0) = \rho(w_{k'}  ...w_1)(x_0)$ and $\rho'(v_{l'}...v_1)(x_0) = \rho(v_{l'}...v_1)(x_0)$ for all $k' \leq k$ and $l' \leq l$.   It follows that $\rho'(sw)(x_0) = h \rho(sw)(x_0)$, and $\rho'(tv)(x_0) = \rho(tv)(x_0)$.  (It is easy to check that this even works if $t = s^{-1}$.)  Thus, by definition of $h$, the triples have opposite orientations.   

We have just shown that, for any $\epsilon > 0$, there exists an action of $F_n$ on $S^1$ such that the image of any point under any generator of $F_n$ is at most distance $\epsilon$ from the original action $\rho$, and yet the cyclic order of the orbit of $x_0$ under the new action is different.  Given any finite subset of $F_n$ we can choose $\epsilon$ small enough so that the new order on the orbit of $x_0$ will agree with the previous one on this finite set.  Order completion now gives an arbitrarily close circular order to the original, hence  could not have been isolated.    This completes the proof of the claim. 

Now we finish the proof of the proposition.  Assume that $\rho$ is the dynamical realization of an isolated circular order with basepoint $x_0$.  We construct the domains $D(s)$ in three steps.  \\
\textit{Step 1}. For each $s$, declare that $D(s)$ contains every point of the form $\rho(sw)(x_0)$, and every point $y$ in the closure of $\{ \rho(sw)(x_0) : sw \text{ a reduced word in } a_i^{\pm1} \}$.  The claim we just proved implies that accumulation points of $\{ \rho(sw)(x_0) : sw \text{ a reduced word in } a_i^{\pm1} \}$ are disjoint from those of $\{ \rho(tw)(x_0) : tw \text{ a reduced word in } a_i^{\pm1} \}$ for $s \neq t$.  
  \\
\textit{Step 2.} If $I$ is a connected component of the complement of $\overline{F_n(x_0)}$, and the endpoints of $I$ are both in the already constructed $D(s)$, then declare $I \subset D(s)$.   
Note that the sets $D(s)$ are pairwise disjoint, and so far we have $\rho(s)(D(t)) \subset D(s)$ for every $t \neq s^{-1}$.  
 \\
 \textit{Step 3.} As defined so far, the sets $D(s)$ cover all of $S^1$ \emph{except} for some intervals complementary to $\overline{F_n(x_0)}$, precisely, those intervals with boundary consisting of one point in $D(s)$ and the other point in $D(t)$ for some $s \neq t$.  (There is also an exceptional case where one of the endpoints is $x_0$, i.e. allowing for the empty word.)  
 Our first claim, i.e. that every point in $\overline{F_n(x_0)}$ has a neighborhood containing only points of $\{ \rho(sw)(x_0) : sw \}$ for some fixed $s$, implies, since $S^1$ is compact, that there are only finitely many such complementary intervals. Also, since $\rho$ is a dynamical realization, it is not possible to have a complementary interval $I$ with both endpoints in $\overline{F_n(x_0)}\setminus F_n(x_0)$ (see Construction \ref{embedding def}). Further, form Theorem \ref{isolated linear part}, it follows that the isolated circular order associated to $\rho$ has a non-trivial linear part which is isomorphic to $\Z$ (since higher rank free groups has no isolated left orders, see \cite{Navas orders}), thus there is an element  $h_{min}\in F_n$, such that the (small) interval $I_{min}=(x_0, h_{min}(x_0))$ has no point of $F_n(x_0)$ in its interior. The same holds for $g(I_{min})$ for any $g\in F_n$. This implies that any complementary interval of the sets $D(s)$ must have each endpoint inside $F_n(x_0)$.  

Suppose that $I$ is such a complementary interval, so $I$ is of the form $(s_1 s_2 ...s_k(x_0),  t_1 t_2, ... t_l(x_0))$ where $s_1 s_2 ...s_k$ and $t_1 t_2... t_l$ are reduced words in the generators $a_1^{\pm1}, ... , a_n^{\pm1}$ and $s_1 \neq t_1$.  
This implies that $s_1^{-1}(I) = (s_2 ...s_k(x_0),  s_1^{-1}t_1 t_2... t_l(x_0))$ is also a complementary interval since $s_1... s_k$ is a reduced word i.e. $s_2 \neq s_1^{-1}$.  Similarly, $t_1^{-1}(I) = (t_1^{-1}s_1 s_2 ...s_k(x_0),  t_2... t_l(x_0))$ is also a complementary interval.   
Note that, for any $u \notin \{s_1^{-1}, t_1^{-1}\}$ the endpoints of $u(I)$ are (reduced) words beginning in $u$, so $u(I) \subset D(u)$.  

Iterating this argument, it follows that, for any proper initial string $s_1... s_p$ with $p< k$, the interval $(s_1... s_p)^{-1}(I)$ is complementary, as are its images under $s_{p+1}^{-1}$ and $s_{p}$. The interval $(s_1... s_k)^{-1}(I) = (x_0, (s_1... s_k)^{-1}t_1...t_l(x_0))$ is also complementary, and also its image under $s_k$, but its images under every other generator are contained in some already defined $D(s)$.   An analogous statement holds for images of $I$ under initial strings of $t_1...t_l$.  
In particular, this reasoning shows that $h_{min} = (s_1... s_k)^{-1}t_1...t_l$, and that any complementary interval is equal to the image of $(x_0, h_{min}(x_0))$ under the inverse of a prefix of  $h_{min}$. For convenience, we switch to some simpler notation, writing $h_{min} = u_1 u_2 ... u_n$, so that the complementary intervals are
$$(x_0,u_1\ldots u_n(x_0)), \, (u_1^{-1}(x_0), u_2\ldots u_n(x_0)),\ldots,\, (u_n^{-1}\ldots u_1^{-1}(x_0),x_0).$$


We now show how to enlarge the domains $D(s)$ to contain parts of these complementary intervals, in order to have them satisfy the ``ping-pong condition" $\rho(s) (S^1 \setminus D(s^{-1})) \subset D(s)$, for each of the generators $s$.   We remind the reader that, for every other interval $J$ in (an already defined portion of) a set $D(s)$, and for every $t \neq s^{-1}$, we have $t(J) \subset D(t)$.  Thus, after extending domains we need only check what happens on the complementary intervals.  

First, choose $p_1\in (x_0,u_1\ldots u_n(x_0))$ and add $[p_1,u_1\ldots u_n(x_0)]$ to $D(u_1)$ and $[u_1^{-1}(x_0), u_1^{-1}(p_1)]$ to $D(u_1^{-1})$. Note that this is coherent with Step 1. Note also that in this way we have  that $u_1^{-1}(I_{min}\setminus D(u_1))\subset D(u_1^{-1})$ and that $u_1(u_1^{-1}(I_{min})\setminus D(u_1^{-1}))\subset D(u_1)$ (so in particular $s (I_{min}\setminus D(u_1))\subset D(s)$ and $s(u_1^{-1}(I_{min}\setminus D(u_1^{-1}))\subset D(s)$ for every generator $s$). Then, we choose $p_2\in (u_1^{-1}(p_1),u_2\ldots u_n(x_0))$ and add $[p_2,u_2\ldots u_n(x_0)]$ to $D(u_2)$ and $[u_2^{-1}u_1^{-}1(x_0),u_2^{-1}(p_2)]$ to $D(u_2^{-1})$. 

We repeat this procedure iteratively; in the $j^{th}$ step we choose $p_j\in (u_{j-1}^{-1}(p_{j-1}),u_j\ldots u_n (x_0))$ and add $[p_j,u_j\ldots u_n(x_0)]$ to $D(u_j)$ and $[u_j^{-1} \ldots u_1^{-1}(x_0), u_j^{-1}(p_j)]$ to $D(u_j^{-1})$. This procedure ends with the choice of $p_n\in (u_{n-1}^{-1}(p_{n-1}),u_n(x_0)) $ and the addition of $[p_n,u_n(x_0)]$ to $D(u_n)$ and the addition of $[u_n^{-1} \ldots u_1^{-1}(x_0), u_n^{-1}(p_n)]$ to $D(u_n^{-1})$. It is easy to check that the domains $D(s)$'s remain disjoint, and that now the ping pong condition is satisfied globally. \end{proof}


 

Fixing $n$, it is now relatively easy to produce infinitely many non-conjugate actions of $F_n$ on $S^1$ by varying the number of connected components and cyclic orientation of domains $D(s)$ for each generator $s$.  Moreover, these can be chosen such that taking the orbit of a point produces infinitely many nonconjugate circular orders on $F_n$  (the reader may try this as an exercise, otherwise we will see some explicit examples shortly).   However, the existence of such actions is \emph{not} enough to prove that $\CO(F_n)$ has infinitely many nonconjugate isolated points.  This is because not every ping-pong action arises as the dynamical realization of a circular order; therefore Proposition \ref{ping pong isolated} does not automatically apply.

As a concrete example, only one of the two cases in Remark \ref{separating rk} is the dynamical realization of a circular order.  We will soon see that a circular order on $F_2$ produced by Construction \ref{pingpong const} when $D(b)$ and $D(b^{-1})$ lie in different connected components of $S^1 \setminus \big( D(a) \cup D(a^{-1}) \big)$ is isolated, but one produced when $D(b)$ and $D(b^{-1})$ lie in the same connected component is not isolated.  

To illustrate this point, and as a warm up to the proof of Theorem \ref{f_2_iso_thm}, we now prove exactly which ping-pong actions come from dynamical realizations in the simple case where the domains $D(s)$ are all connected.  

\subsection{Schottky groups and simple ping-pong dynamics}

\begin{definition} 
Say that an action of $F_n$ on $S^1$ has \emph{simple} ping-pong dynamics if the generators satisfy the requirements of a ping-pong action, and there exist connected attracting domains $D(s)$ for all generators and inverses.  
\end{definition}

This short section gives a complete characterization of dynamical realizations with simple ping-pong dynamics.  To do this, we will use some elementary hyperbolic geometry and results on classical Schottky subgroups of $\PSL(2,\R)$.    Although our exposition aims to be self-contained,  a reader looking for more background can refer to \cite[Ch. 1 and 2]{Dalbo} for a good introduction.  

\paragraph{On ping-pong in $\PSL(2,\R)$.}
There is a natural action of $\PSL(2,\R)$ on $S^1 = \R \cup \{\infty\}$ by M\"obius transformations.  A finitely generated subgroup $G \subset \PSL(2,\R)$ is called \emph{Schottky} exactly when it has ping-pong dynamics.  
The benefit of working in $\PSL(2,\R)$ rather than $\Homeo_+(S^1)$ is that M\"obius transformations of the circle extend canonically to the interior of the disc.  Considering the interior of the disc as the Poincar\'e model of the hyperbolic plane, Schottky groups act properly discontinuously by isometries.  Thus, it makes sense to describe the hyperbolic surface obtained by quotient the disc by  such a ping-pong action. We will prove the following.  

\begin{theorem} \label{n even thm}
Let $c \in \CO(F_n)$.  If the dynamical realization $\rho_c$ of $c$ has simple ping-pong dynamics, then $n$ is even, and $\rho_c$ is topologically conjugate to a representation $\rho: F_n \to \PSL(2, \R) \subset \Homeo_+(S^1)$ corresponding to a hyperbolic structure on a genus $n/2$ surface with one boundary component.  
\end{theorem}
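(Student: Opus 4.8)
The plan is to reduce the given topological action to a concrete Schottky representation into $\PSL(2,\R)$ whose combinatorics match the ping-pong data, and then to read off the genus and number of boundary components from one sharp constraint that dynamical realizations must satisfy. First I would record the structure of the action. Simple ping-pong dynamics give $2n$ pairwise disjoint closed arcs $D(s)$, $s \in \{a_1^{\pm 1}, \dots, a_n^{\pm 1}\}$, arranged in some cyclic order around $S^1$, with the pairing $\rho(a_i)\big(S^1 \setminus D(a_i^{-1})\big) \subset D(a_i)$. Any gap $J$ between two consecutive domains is disjoint from every $D(s)$, so $\rho(s)(J) \subset D(s)$ for all $s$; hence $J$ is wandering and the action is not minimal, and for $n \geq 2$ the standard ping-pong argument produces a Cantor limit set. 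By the trichotomy of Lemma \ref{trichotomy} the action therefore has an exceptional minimal set $\Lambda$, namely this limit set. (The case $n=1$ yields only finite orbits, so is never a dynamical realization, consistent with the conclusion that $n$ is even.) I would then build a classical Schottky representation $\rho_0 \colon F_n \to \PSL(2,\R)$ by placing $2n$ disjoint arcs in $S^1 = \partial \H$ in the \emph{same} cyclic order as the $D(s)$ and choosing hyperbolic isometries realizing the same pairing; this is standard (cf.\ \cite{Dalbo}).

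By Lemma \ref{lem pingpong}, the cyclic order of the orbit of a basepoint $x_0$ lying in the matching gap is determined entirely by this combinatorial data, so $\rho_0$ and $\rho_c$ induce the \emph{same} circular order $c$ from $x_0$. The key step is the following transitivity constraint. Since $\rho_c$ is a dynamical realization with exceptional minimal set $\Lambda$, Lemma \ref{min set condition} guarantees that \emph{every} complementary interval of $\Lambda$ meets the single orbit $\rho_c(F_n)(x_0)$. As $F_n$ permutes these intervals and the orbit is one $F_n$-orbit, any interval containing $\rho_c(g)(x_0)$ is carried to any interval containing $\rho_c(g')(x_0)$ by $\rho_c(g'g^{-1})$; hence $F_n$ acts \emph{transitively} on the components of $S^1 \setminus \Lambda$. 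Because $\rho_0$ has the same combinatorics, the stabilizer of a complementary interval of its limit set is the infinite cyclic group generated by the corresponding boundary geodesic, and (using that the linear part of $c$ is $\Z$, by Theorem \ref{isolated linear part}) it acts on that interval as the dynamical realization of the induced $\Z$-order. Corollary \ref{min set 2} then shows $\rho_0$ is itself a dynamical realization of $c$, so by Corollary \ref{dynam real cor}, $\rho_c$ is topologically conjugate to $\rho_0$.

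It remains to count. Passing to $\H$, the group $\rho_0(F_n)$ acts properly discontinuously, and $\Sigma := \H / \rho_0(F_n)$ is a hyperbolic surface with $\pi_1(\Sigma) \cong F_n$, hence homotopy equivalent to a wedge of $n$ circles, so $\chi(\Sigma) = 1 - n$. The components of the domain of discontinuity $\Omega = S^1 \setminus \Lambda$ descend to the funnel (boundary-at-infinity) circles of $\Sigma$, one per $F_n$-orbit of components; the transitivity established above says $\Omega/\rho_0(F_n)$ is connected, so $\Sigma$ has exactly one boundary component. For a genus $g$ surface with one boundary component $\chi = 2 - 2g - 1 = 1 - 2g$, and comparing with $\chi = 1 - n$ forces $n = 2g$. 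Thus $n$ is even, $g = n/2$, and $\rho_c$ is conjugate to the representation given by a hyperbolic structure on a genus $n/2$ surface with one boundary component, as claimed.

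I expect the main obstacle to be the reduction of the first paragraph: justifying that the purely combinatorial ping-pong data pin down the topological conjugacy class of the action, and in particular verifying that the $\PSL(2,\R)$ model $\rho_0$ is \emph{also} a dynamical realization so that Corollary \ref{dynam real cor} may be invoked. Once transitivity on complementary intervals is in hand, the passage to ``one boundary component'' and the Euler-characteristic computation are immediate. The converse assertion (that every such hyperbolic structure representation arises as the dynamical realization of an isolated circular order) follows by running Corollary \ref{min set 2} in the other direction and then applying Proposition \ref{ping pong isolated}.
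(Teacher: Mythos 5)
Your strategy is genuinely different in organization from the paper's: the paper first constructs the topological conjugacy between $\rho_c$ and its M\"obius model directly --- defining $f$ on the orbit via Lemma \ref{lem pingpong}, extending it to the orbit closure using the hyperbolic-geometry fact that nested images of attracting domains under a $\PSL(2,\R)$ ping-pong action shrink to single points, and invoking Proposition \ref{semiconj prop} (with $\rho_c$ a dynamical realization) to see that $f$ is a homeomorphism --- and only then counts boundary components via Lemma \ref{more boundary lem}. You instead try to get the conjugacy for free from Corollary \ref{dynam real cor}, at the price of proving that the M\"obius model $\rho_0$ is itself a dynamical realization. That reorganization is viable in principle, but it leaves a genuine gap exactly where you yourself flag one: the transfer of transitivity from $\rho_c$ to $\rho_0$. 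Corollary \ref{min set 2} requires that $\rho_0(F_n)$ act transitively on the components of $S^1\setminus\Lambda_0$, where $\Lambda_0$ is the limit set of $\rho_0$; what you actually prove (correctly, via Lemma \ref{min set condition}) is transitivity for $\rho_c$ on the components of $S^1\setminus\Lambda$. The phrase ``because $\rho_0$ has the same combinatorics'' does not close this. Lemma \ref{lem pingpong} controls only the cyclic order of the orbit of $x_0$, and the cyclic order of an orbit does not determine the structure of the minimal set: a rotation and a Denjoy counterexample have orbits with identical cyclic order but entirely different minimal sets. Worse, the potential failure for $\rho_0$ is invisible to orbit combinatorics: every orbit point $w(x_0)$ with $w \neq \id$ lies inside the domain of the first letter of $w$, so a complementary interval of $\Lambda_0$ lying in a ``bad'' $F_n$-orbit (one corresponding to an extra boundary component, exactly the situation of Lemma \ref{more boundary lem}) contains \emph{no} orbit points at all, and no statement about the orbit alone can detect or exclude it. Repairing this requires a real argument --- e.g.\ that for ping-pong actions the partition of all gap-images $g\bigl(S^1\setminus\bigcup_s D(s)\bigr)$ into components of the complement of the limit set, together with the $F_n$-action on these components, is determined by the cyclic order and containment relations of the domains $D(w)$ over all reduced words $w$ --- and such an argument is comparable in substance to the paper's construction of $f$.

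Two smaller points. First, your parenthetical disposal of $n=1$ is wrong as stated: the orbit of $x_0$ under such a $\Z$-action is infinite, and the mere existence of finite orbits does not preclude being a dynamical realization (realizations of degenerate circular orders, i.e.\ of left orders, have global fixed points). What actually rules out $n=1$ is that the orbit of $x_0$ accumulates at two distinct fixed points of the generator, whereas the realization of the induced degenerate order has a single accumulation point; alternatively, Lemma \ref{more boundary lem} applies since the quotient annulus has two infinite-volume ends. Second, your appeal to Theorem \ref{isolated linear part} to identify the linear part presupposes that $c$ is isolated, which is neither a hypothesis of the theorem nor established at that stage of your argument; it is also unnecessary, since the stabilizer of a complementary interval for a Schottky group is generated by a hyperbolic element acting freely on that interval, and a fixed-point-free homeomorphism of the line generating a $\Z$-action is the dynamical realization of a left order on $\Z$ directly.
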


Note that the conclusion of the theorem is (as it should be) independent of any choice of generating set for $F_n$, even though the definition of ping-pong dynamics is phrased in terms of a specific set of generators.  

As a special case, Theorem \ref{n even thm} immediately gives many concrete examples of actions of free groups on $S^1$ that \emph{do not} arise as dynamical realizations of any circular order (c.f. Lemma \ref{more boundary lem}) and justifies the remarks at the end of the previous subsection.  
\medskip

The main idea of the proof of Theorem \ref{n even thm} can be summarized as follows: 
For $\PSL(2,\R)$ actions, the condition that the quotient has a single boundary component exactly captures the condition that $F_n$ acts transitively on the connected components of the complement of the exceptional minimal set, i.e. the condition of Corollary \ref{min set 2}.   For general ping-pong actions in $\Homeo_+(S^1)$, we use the ``minimality" property of being a dynamical realization (Proposition \ref{semiconj prop}) to produce a conjugacy into 
$\PSL(2,\R)$, then cite the $\PSL(2,\R)$ case.  

As motivation and as a first step in the proof, we start with an example and a non-example. 

\begin{lemma}[Example of dynamical realization]  \label{one boundary lem}
Let $a_1, b_1, a_2, b_2,  ... , a_n, b_n$ denote generators of $F_{2n}$. 
Let $\rho: F_{2n} \to \PSL(2,\R) \subset \Homeo_+(S^1)$ have simple ping-pong dynamics.  Suppose that $x_0$ and the attracting domains are in the cyclic (counterclockwise) order 
$$ x_0, \, D(a_1), \, D(b_1), \, D(a_1^{-1}), \, D(b_1^{-1}),  \, D(a_2), \, D(b_2), ... , D(a_n^{-1}), \, D(b_n^{-1}).$$
Then $\rho$ is the dynamical realization of a circular order with basepoint $x_0$.  
\end{lemma}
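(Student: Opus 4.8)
My plan is to verify the three hypotheses of Corollary \ref{min set 2} for the representation $\rho$, which then yields the conclusion immediately. The essential input is that a ping-pong representation into $\PSL(2,\R)$ is a classical Schottky group: the geodesics $\gamma(s) \subset \H^2$ spanning the endpoints of the closed intervals $D(s)$ bound a fundamental domain $P = \H^2 \setminus \bigcup_s H(s)$, where $H(s)$ is the half-plane cut off by $\gamma(s)$ on the side of $D(s)$, and the relation $\rho(s)(S^1 \setminus D(s^{-1})) \subset D(s)$ says precisely that $\rho(s)$ is the side-pairing carrying $\gamma(s^{-1})$ to $\gamma(s)$. Such a group is discrete, free on the $a_i, b_i$, and convex cocompact (see \cite{Dalbo}).

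First, hypothesis (i). Being a nonelementary convex cocompact group, $\rho(F_{2n})$ has limit set $K$ equal to a Cantor set which is the unique minimal set; this is the exceptional minimal set of Lemma \ref{trichotomy}. The point $x_0$ lies strictly between the domains $D(b_n^{-1})$ and $D(a_1)$, hence outside $\overline{\bigcup_s D(s)} \supset K$, so $x_0$ lies in a complementary component $I$ of $S^1 \setminus K$.

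Next, hypothesis (ii), transitivity on the components of $S^1 \setminus K$. This is the crux of the argument and the place where the specific cyclic arrangement $a_1, b_1, a_1^{-1}, b_1^{-1}, \dots, a_n, b_n, a_n^{-1}, b_n^{-1}$ is indispensable. Since $\rho(F_{2n})$ is convex cocompact, its orbits on the components of $S^1 \setminus K$ are in natural bijection with the boundary components (funnels) of the surface $\H^2/\rho(F_{2n})$; thus $\rho(F_{2n})$ is transitive on these components if and only if the surface has a single boundary component. I would establish this by the standard analysis of the side-pairings of $P$: reading the free sides of $P$ (the arcs of $S^1$ between consecutive domains, together with the arc containing $x_0$) and following the identifications induced by the pairings $\rho(a_i)\colon \gamma(a_i^{-1}) \to \gamma(a_i)$ and $\rho(b_i)\colon \gamma(b_i^{-1}) \to \gamma(b_i)$, one checks that the interlocking pattern $a_i b_i a_i^{-1} b_i^{-1}$ links the free sides into a single cycle. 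By contrast, the unlinked arrangement of Remark \ref{separating rk} produces several cycles; this is exactly why only one of the two cases of that remark is a dynamical realization. Since $\chi(\H^2/\rho(F_{2n})) = \chi(F_{2n}) = 1 - 2n$, a single boundary component forces the surface to have genus $n$, consistent with the statement. I expect this bookkeeping --- rather than any conceptual difficulty --- to be the main obstacle; the cleanest way to carry it out is to invoke the classification of the surface obtained from the polygon $P$ by its side-pairings, the pattern being the standard presentation of the one-holed genus-$n$ surface.

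Finally, hypothesis (iii). The component $I$ containing $x_0$ is a funnel gap, so its stabilizer in $\rho(F_{2n})$ is the infinite cyclic group $\langle u \rangle$ generated by the boundary holonomy $u$, a primitive hyperbolic element conjugate to $[a_1,b_1][a_2,b_2]\cdots[a_n,b_n]$, whose two fixed points are exactly the endpoints of $I$. Consequently $u$ has no fixed point in the interior of $I$, so $\langle u\rangle \cong \Z$ acts on $I \cong \R$ conjugate to a nontrivial translation; this is precisely the dynamical realization of the standard left order on $\Z$, the orbit of $x_0$ being order-isomorphic to $\Z$ and accumulating only at the two endpoints. In particular $\stab(I)$ is nontrivial and, since any $g$ fixing $x_0 \in I$ must preserve $I$ (distinct components of $S^1\setminus K$ in one orbit are disjoint), we get $\stab(x_0) \subset \langle u\rangle$ and hence $\stab(x_0) = \{\id\}$, so $x_0$ is a legitimate basepoint. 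With (i), (ii), and (iii) verified, Corollary \ref{min set 2} shows that $\rho$ is the dynamical realization of a circular order with basepoint $x_0$, as claimed.
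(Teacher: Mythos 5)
Your proposal is correct and follows essentially the same route as the paper: both verify the hypotheses of Corollary \ref{min set 2} by identifying $\rho(F_{2n})$ as a Schottky group whose quotient is a one-holed genus-$n$ surface, observing that the single boundary component gives transitivity on the complementary intervals of the limit set, and noting that the stabilizer of the interval containing $x_0$ is the cyclic group generated by the boundary holonomy (the product of commutators) acting by translations, hence a dynamical realization of a left order on $\Z$. The only cosmetic difference is that the paper cites Dal'bo (Prop.\ I.2.17) for the identification of the quotient surface, whereas you propose to verify it directly from the side-pairing pattern of the fundamental polygon.
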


Note that the $n=1$ case is one of the cases from Remark \ref{separating rk}.

\begin{proof} 
By the ping-pong lemma, $x_0$ has trivial stabilizer under $\rho(F_{2n})$, so its orbit defines a circular order $c$ on $F_{2n}$.  We claim that the dynamical realization of this circular order is $\rho$.  To see this, we first describe the action of $\rho$ more concretely using some elementary hyperbolic geometry.   Having done this, we will be able to apply Corollary \ref{min set 2} to the action.  

The arrangement of the attracting domains specified in the lemma implies that the quotient of $\H^2$ by $\rho(F_{2n})$ is, topologically, the interior of a surface $\Sigma$ of genus $n$ with one boundary component.  Geometrically, it is a surface of infinite volume with a singe end, as illustrated in Figure \ref{coverfig} for the case $n = 1$.  (This is elementary; see Proposition I.2.17 and discussion on page 51 of \cite{Dalbo} for more details.) 

\begin{figure}
\centering
\labellist
\small\hair 2pt
\pinlabel $x_0$ at 140 15
\pinlabel $D(a_1)$ at 470 130
\pinlabel $D(b_1)$ at 450 370
\pinlabel $D(a_1^{-1})$ at 130 460
\pinlabel $D(b_1^{-1})$ at -25 220
\pinlabel $\tilde{\gamma}$ at 250 100
\pinlabel $\gamma$ at 900 220
\pinlabel $a_1$ at 700 100
\pinlabel $b_1$ at 810 250
\endlabellist
\includegraphics[width=5in]{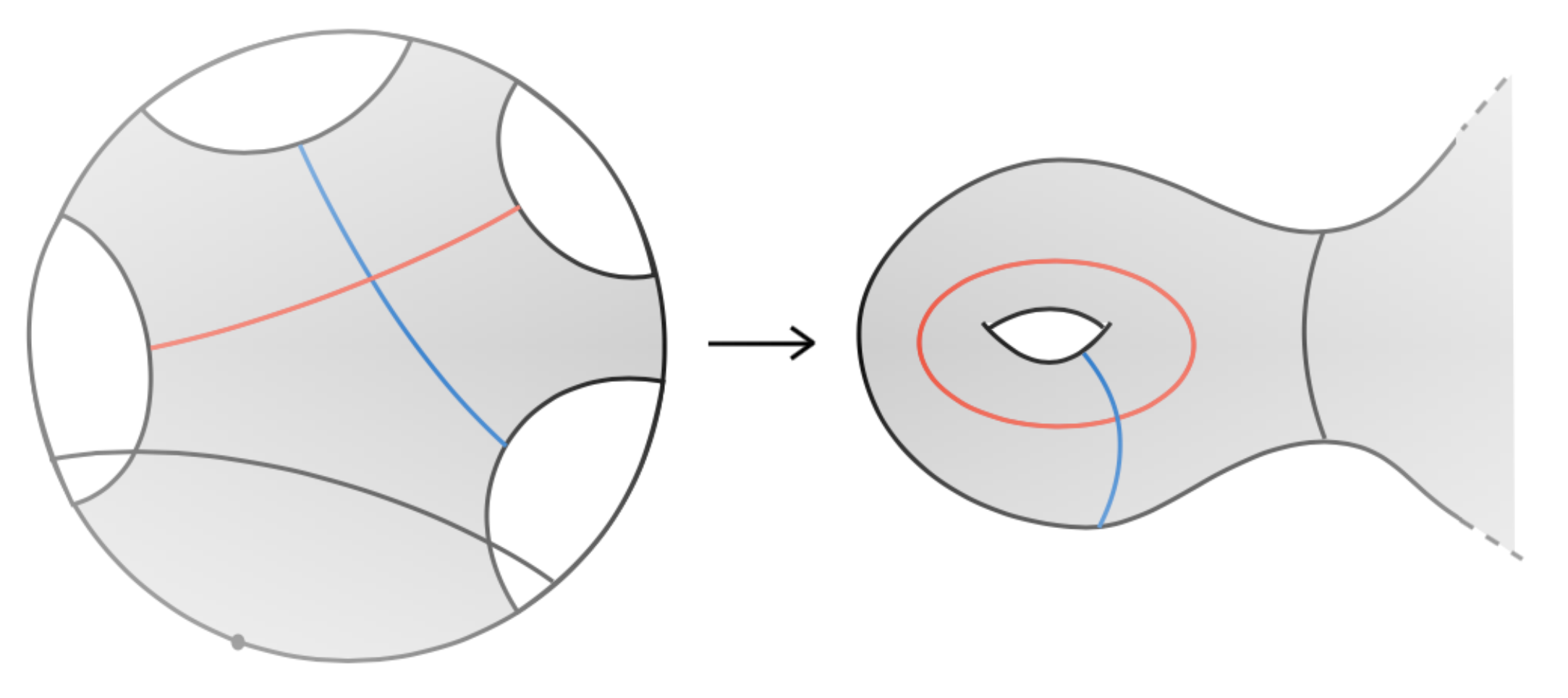}     
\caption{The open disc as universal cover $\tilde{\Sigma}$. The shaded area is a fundamental domain}  
\label{coverfig}
\end{figure}

Let $\gamma \subset \Sigma$ be a simple geodesic curve isotopic to the boundary.  The complement of $\gamma$ in $\Sigma$ has two connected components, a compact genus $n$ surface with boundary $\gamma$ (call this surface $\Sigma'$), and an infinite volume annulus, call this $A$.   Fixing a basepoint on $\Sigma$ and considering $\gamma$ as a based curve lets us think of it as an element of $\pi_1(\Sigma) = F_{2n}$, and its image $\rho(\gamma) \in \PSL(2,\R)$ is a hyperbolic element translating along an axis, $\tilde{\gamma} \subset \H^2$, which projects to the curve $\gamma \subset \Sigma$. 

We now describe the exceptional minimal set for $\rho$ in terms of the geometry of this surface.  It is a standard fact that the exceptional minimal set is precisely the limit points of the universal cover of the compact part $\tilde{\Sigma'}$ (see \cite[Sec. 3.1, Prop 3.6]{Dalbo}).   This can also be described by looking at images of $\tilde{\gamma}$ under the action of $\rho(F_{2n})$.  These images are disjoint curves, each bounding on one side a fundamental domain for the compact surface $\Sigma'$, and on the other a half-plane that contains a fundamental domain for the annulus $A$.   The intersection of the half-planes that cover $A$ with the $S^1$ boundary of $\H^2$ make up the complement of the exceptional minimal set for $\rho(F_{2n})$.  

What is important to take from this discussion is that there is a \emph{single orbit} for the permutation action of $\rho(F_{2n})$ on the complementary intervals to its exceptional minimal set, and this corresponds to the fact that $\Sigma$ has a single boundary component, in our case bounded by $\gamma$.   
As the exceptional minimal set is contained in the interior of the union of the attracting domains $D(a_i^{\pm 1})$ and $D(b_i^{\pm 1})$, the basepoint $x_0$ for the dynamical realization lies in a complementary interval $I$.  In our case, taking $\gamma$ to be the based curve represented by the commutator $[a,b]$,\footnote{here we use the convention for the action that $[a,b](x) = b^{-1}a^{-1}ba(x)$} our specification of the cyclic order of $x_0$ and the attracting domains were chosen so that $x_0$ lies in the interval bounded by the lift $\tilde{\gamma}$ that is the axis of this commutator.  The configuration is summarized in  Figure \ref{coverfig}.    Moreover, $\rho(\gamma)$ preserves $I$, acts on it by translations, and generates the stabilizer of $I$ in $\rho(F_{2n})$.  

In summary, $S^1$ is the union of an exceptional minimal set contained in the closure of $\rho(G)(x_0)$ and the orbit of the open interval $I$.  As the action of $\langle \gamma \rangle$ on $I$ is the dynamical realization of a left order, Corollary \ref{min set 2} implies that $\rho$ is the dynamical realization of $c$.  
\end{proof}

\medskip 

By contrast, geometric representations from surfaces with more boundary components do not arise as dynamical realizations.  

\begin{lemma}[Non-example] \label{more boundary lem}
Let $\rho: F_n \to \PSL(2,\R) \subset \Homeo_+(S^1))$ be such that $\H^2 / \rho(F_n)$ is a surface with more than one infinite volume (i.e. non-cusped) end.  Then $\rho$ is not the dynamical realization of a circular order on $F_n$.  
\end{lemma}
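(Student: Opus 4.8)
The plan is to derive a contradiction from Lemma \ref{min set condition}, which records the essential constraint satisfied by the complementary intervals of the exceptional minimal set of \emph{any} dynamical realization. First I would recall the relevant hyperbolic geometry. Since $\rho(F_n)$ is a Schottky (hence convex cocompact, torsion-free, non-elementary) subgroup of $\PSL(2,\R)$, its limit set $K \subset S^1$ is a Cantor set, and by Lemma \ref{trichotomy} it is exactly the exceptional minimal set of the action on $S^1$. As reviewed in the proof of Lemma \ref{one boundary lem} (see also \cite[Sec. 3.1]{Dalbo}), the domain of discontinuity $S^1 \setminus K$ is a disjoint union of open intervals, the quotient $(S^1 \setminus K)/\rho(F_n)$ is a disjoint union of circles, and these circles are in bijection with the infinite-volume (funnel) ends of $\H^2/\rho(F_n)$. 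Consequently the number of $\rho(F_n)$--orbits on the set of connected components of $S^1 \setminus K$ equals the number of infinite-volume ends, which by hypothesis is at least two.

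Now suppose, for contradiction, that $\rho$ is the dynamical realization of some circular order on $F_n$, with basepoint $x_0$. By the structure above the action has exceptional minimal set $K$, and $x_0$ has trivial stabilizer, so Lemma \ref{min set condition} applies and tells us that \emph{every} connected component of $S^1 \setminus K$ contains a point of the orbit $\rho(F_n)(x_0)$. Pick complementary intervals $J$ and $J'$ lying in two distinct $\rho(F_n)$--orbits, and choose $g, g' \in F_n$ with $\rho(g)(x_0) \in J$ and $\rho(g')(x_0) \in J'$. Then $x_0$ lies in both $\rho(g^{-1})(J)$ and $\rho(g'^{-1})(J')$; since distinct complementary intervals of $K$ are disjoint, these two intervals must coincide, whence $J' = \rho(g'g^{-1})(J)$. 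This places $J$ and $J'$ in the same orbit, contradicting our choice. Therefore $\rho$ cannot be a dynamical realization.

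The only genuinely geometric input is the correspondence between funnel ends and $\rho(F_n)$--orbits of complementary intervals, together with the identification of the limit set with the exceptional minimal set; both are standard, and the first is already set up in the proof of Lemma \ref{one boundary lem}, so the hard part is really just to record them precisely. The remaining argument is the short combinatorial observation above. I would add one remark on hypotheses: non-elementarity (here $n \geq 2$) is what guarantees $K$ is a Cantor set rather than a finite set; the degenerate rank-one situation produces a finite orbit rather than an exceptional minimal set, and can be excluded by the analogous finite-orbit bookkeeping, invoking case \emph{i)} of Lemma \ref{trichotomy} in place of Lemma \ref{min set condition}.
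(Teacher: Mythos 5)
Your proof is correct and follows essentially the same route as the paper's: both identify the exceptional minimal set, invoke the correspondence between infinite-volume ends and $\rho(F_n)$--orbits of complementary intervals, and derive a contradiction from Lemma \ref{min set condition}. The only differences are cosmetic --- the paper argues directly that some complementary interval must miss the orbit of $x_0$ (since that orbit can only meet intervals in a single orbit-class), whereas you argue the contrapositive; your closing remark on the degenerate $n=1$ case is a small bonus that the paper's proof leaves implicit.
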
 

A particular case of the lemma is the ping-pong action of $F_2 = \langle a, b \rangle$ with attracting domains in cyclic order $D(a), D(a^{-1}), D(b), D(b^{-1})$.  There the quotient $\H^2 / \rho(F_2)$ is homeomorphic to a sphere minus three closed discs. 

\begin{proof}  
Let $\rho$ be as in the Lemma, and let $\Sigma =  \H^2 / \rho(F_n)$.  Let $\gamma_1, \gamma_2, ... $ be geodesic simple closed curves homotopic to the boundary components of $\Sigma$.   As in the proof of Lemma \ref{one boundary lem}, $\rho(F_n)$ has an exceptional minimal set, and the connected components of the complement of this set are the boundaries of disjoint half-planes, bounded by lifts of the curves $\gamma_i$ to geodesics in $\H^2$.   For fixed $i$, the union of these half-planes bounded by lifts of $\gamma_i$ is a $\rho(F_n)$-invariant set.  

Since we are assuming that there is more than one boundary component, for any candidate for a basepoint $x_0$, there is some $i$ such that $x_0$ is \emph{not} contained in an interval bounded by a lift of $\gamma_i$.   Equivalently, there is a connected component of the complement of the exceptional minimal set that does not contain a point in the orbit of $x_0$.   By Lemma \ref{min set condition}, this implies that $\rho$ cannot be a dynamical realization of a circular order with basepoint $x_0$.  
\end{proof}

\begin{remark}
Given $\rho$ as in Lemma \ref{more boundary lem}, and a point $x \in S^1$ with trivial stabilizer, we get a circular order on $F_n$ from the orbit of $x$.   The lemma simply says that $\rho$ is \emph{not} its dynamical realization.  But it is not hard to give a positive description of what the dynamical realization actually is:  it is the action obtained from $\rho$ by collapsing every interval in the complement of the exceptional minimal set to a point, \emph{except} those that contain a point in the orbit of $x$.  
The resulting action is conjugate to a representation $\rho': F_n \to \PSL(2,\R)$ such that $\H^2/\rho'(F_n)$ is a surface with either all (or all but one) infinite-volume end of the surface $\H^2/ \rho(F_n)$ replaced by a finite-volume \emph{cusp}  Put otherwise, either for all $i$ or all but one $i$, we have that $\rho'(\gamma_i)$ is parabolic rather than hyperbolic.    Both cases do in fact occur; for instance, the cusp-only case arises by taking the orbit of a point with trivial stabilizer under $\rho'$, since such an action $\rho'$ is already minimal Proposition \ref{semiconj prop} implies that it is the dynamical realization of this order.  
We leave the details as an exercise for the reader.  

One can also show that such an action $\rho'$ is not rigid, by finding an arbitrarily small deformation of $\rho'$ so that a parabolic element $\rho'(\gamma_i)$ becomes an infinite order elliptic.  This new action is no longer semi-conjugate to the original -- in fact, even the circular order of the orbit of $x$ under the subgroup generated by $\gamma_i$ will change.  
\end{remark}

Now we can prove Theorem \ref{n even thm}; the goal in the proof is to reduce an arbitrary action to the examples from the $\PSL(2,\R)$ case considered in the previous two lemmas.

 \begin{proof}[Proof of Theorem \ref{n even thm}] 
 Let $\rho: F_n \to \Homeo_+(S^1)$ be the dynamical realization of a circular order, with basepoint $x_0$.  Assume that $\rho$ has simple ping-pong dynamics.  Let $D(a_i ^{\pm 1})$ be the attracting domains for the generators $a_i$ and inverses $a_i^{-1}$.  

Define a representation $\rho': F_n \to \PSL(2, \R) \subset \Homeo_+(S^1)$ by setting $\rho'(a_i)$ to be the unique hyperbolic element such that $\rho'(a_i)(D(a_i)) = \rho(a_i)(D(a_i))$.  
Then $\rho'$ is a simple ping-pong action, and we can take $D'(a_i^{\pm 1}) = D(a_i^{\pm 1})$ to be the attracting domains for $\rho'$.  
We will now show that 
\begin{enumerate}[label=\roman*)] 
\item there exists $f \in \Homeo_+(S^1)$ such that $f \circ \rho'(g) \circ f^{-1} = \rho(g)$ for all $g \in F_n$, and
\item the quotient of $\H^2$ by $\rho'$ is a genus $n/2$ surface with one boundary component, in particular, $n$ is even. 
\end{enumerate}
This will suffice to prove the theorem.  

By Lemma \ref{lem pingpong}, the cyclic order of $\rho(F_n)(x_0)$ and $\rho'(F_n)(x_0)$ agree, so the map $f:\rho(F_n)(x_0) \to \rho'(F_n)(x_0)$ given by $f(\rho(g)(x_0)) = \rho'(g)(x_0)$ is cyclic order preserving.  We claim that, similarly to Proposition \ref{semiconj prop}, $f$ extends continuously to the closure of $\rho(F_n)(x_0)$.   To see this, we look at successive images of attracting domains, following \cite[Prop. 1.11]{Dalbo}.   

If $w= s_1 s_2 ... s_k$ is a reduced word with $s_i \in \{a_1^{\pm 1}, ... a_n^{\pm 1}\}$ define the set $D(s_1, s_2, ..., s_k)$  by  $D(s_1, ...,  s_k) := \rho(s_1 ... s_{k-1}) D(s_k)$.  Make the analogous definition for $D'(s_1, ..., s_{k-1}) := \rho'(s_1, ..., s_{k-1}) D'(s_k)$.   Note that $D(s_1, s_2, ..., s_k) \subset \overset{\circ}D(s_1, s_2, ... s_{k-1})$.  

Let $g_k \in F_n$ be a sequence of distinct elements such that $\lim \limits_{i \to \infty} \rho(g_k)(x_0) = y$.  Write each $g_k$ as a reduced word in the generators and their inverses.  Since the set of generators and inverses is finite, after passing to a subsequence $g_{k_i}$, we may assume that there exist $s_1, s_2, ... $ with $s_{i+1} \neq s_i^{-1}$ and such that $g_{k_i} = s_1 s_2 ... s_i$.   Thus, 
$$\rho(g_{k_i})(x_0) \in D(s_1, s_2, ... ,s_i) \text{ and }$$
$$y \in \bigcap_{i=1}^\infty D(s_1, s_2, ... ,s_i).$$  
Returning to our original sequence, as $D(s_1, s_2,..., s_i) \subset \overset{\circ}D(s_1, s_2,..., s_{i-1})$, and $\rho(g_k)(x_0) \to y$, for any fixed $i$, we will have $\rho(g_k)(x_0) \in D(s_1, s_2,..., s_i)$ for all sufficiently large $k$. 

Since the containment relations among the domains $\rho(s_j)(D(s_i))$ and $\rho'(s_j)(D'(s_i))$ agree, it follows that $\rho'(g_k)(x_0) \in D'(s_1, s_2,... ,s_i)$ whenever $\rho(g_k)(x_0) \in D(s_1, s_2,..., s_i)$.   Since $\rho'$ is a ping-pong action with image in $\PSL(2,\R)$, it follows from hyperbolic geometry (see Lemma 1.10 in \cite{Dalbo}) that the intersection $\bigcap_i D'(s_1, s_2, ..., s_i)$ is a single point, say $y'$.  Thus, $\rho'(g_k)(x_0)$ converges to $y'$, and we may define $f(y) = y'$, giving a continuous extension of $f$.  

Since $\rho$ is a dynamical realization, as in the proof of Lemma \ref{min set condition}, Proposition \ref{semiconj prop} implies that $f$ has a continuous inverse and so must be a homeomorphism onto its image.  We may then extend $f$ over each complementary interval to the orbit of $x_0$, to produce a homeomorphism of $S^1$ such that $f \rho'(g) f^{-1} = \rho(g)$ for all $g \in F_n$.

Topologically, the quotient of $\H^2$ by $\rho'(F_n)$ is the interior of an orientable surface $\Sigma$ with $\pi_1(\Sigma) = F_n$, and therefore genus $g$ and $b = n-2g+1$ boundary components, for some $g$.  
Because $\rho'$ has simple ping-pong dynamics, it has at least one boundary component.  We showed in Lemma \ref{more boundary lem} that, if $\rho'$ is a dynamical realization, then there is at most one boundary component.  Since we are assuming that $\rho$, and hence its conjugate $\rho'$ is a dynamical realization, there is exactly one boundary component, and so $n = 2g$ is even.  This proves assertion ii.   
\end{proof}

\subsection{Infinitely many nonconjugate circular orders}
Finally, we give the proof that $\CO(F_{2n})$ has infinitely many nonconjugate isolated points.   These will come from ping-pong actions with disconnected domains that are ``lifts" of the Schottky actions described in the previous section.  

 \begin{proof}[Proof of Theorem \ref{f_2_iso_thm}]
 Let $G = F_{2n}$, and let $\rho: G \to \PSL(2, \R)$ be a representation such that $\H^2 / \rho(G)$ is an infinite volume genus $n$ surface with one end.  Fix a generating set $a_1, b_1, ... , a_n, b_n$ for $G$.    
 
 \begin{definition}
Fix $k > 1$.   The \emph{standard $k$-lift} $\hat{\rho}$ of $\rho$ is a representation defined as follows:  for each generator $s$ let $\hat{\rho}(s)$ be the unique lift of $\rho(s)$ to the $k$-fold cyclic cover of $S^1$ such that $\hat{\rho}(s)$ has fixed points.   This determines a homomorphism $\hat{\rho}: G \to \Homeo_+(S^1)$ whose image commutes with the order $k$ rigid rotation.   
 \end{definition} 

Let $D(a_i^{\pm 1})$, $D(b_i^{\pm 1})$ be the attracting domains for the generators and their inverses with respect to the representation $\rho$.    For each generator or inverse $s$, let $\hat{D}(s)$ be the pre-image of an attracting $D(s)$ under the ($k$-fold) covering map $\pi: S^1 \to S^1$.  Then $\hat{D}(s)$ is a (disconnected) attracting domain for $\hat{\rho}(s)$, and $\hat{\rho}$ has ping-pong dynamics.  Let $\hat{x}_0$ be a lift of $x_0$.  The next lemma shows that, for infinitely many choices of $k$, this lift $\hat{\rho}$ is the dynamical realization of a circular order with basepoint $\hat{x}_0$.  Having done this, Proposition \ref{ping pong isolated} implies that this circular order is isolated in $\CO(F_{2n})$.  

\begin{lemma}
If $k$ and $2n-1$ are relatively prime, then the standard $k$-lift $\hat{\rho}$ constructed above is the dynamical realization of a circular order.  
\end{lemma}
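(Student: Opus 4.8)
The plan is to verify the hypotheses of Corollary \ref{min set 2} for $\hat\rho$. Since $\rho$ is a Schottky action with exceptional minimal set $K$ (a Cantor set) and $\hat\rho$ is a ping-pong action lifting it, its exceptional minimal set is $\hat K=\pi^{-1}(K)$, and the connected components of $S^1\setminus\hat K$ are precisely the preimages under $\pi$ of the components of $S^1\setminus K$. Let $I$ be the component of $S^1\setminus K$ containing $x_0$; by Lemma \ref{one boundary lem} its $\rho$-stabilizer is the cyclic group $\langle\gamma\rangle$ generated by the boundary element $\gamma=\prod_{i=1}^n[a_i,b_i]$, acting on $I$ as a hyperbolic translation. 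Over $I$ lie $k$ components $\hat I_0,\dots,\hat I_{k-1}$, cyclically permuted by the order-$k$ rotation $r$ (the deck transformation), and we take $\hat x_0\in\hat I_0$, which has trivial stabilizer since $x_0$ does. There are then two things to check: that $\hat\rho(F_{2n})$ acts transitively on the components of $S^1\setminus\hat K$, and that the stabilizer of $\hat I_0$ acts on $\hat I_0$ as the dynamical realization of a left order.

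For transitivity, the key quantity is the \emph{monodromy} $m\in\Z/k$ of $\gamma$, defined by $\hat\rho(\gamma)(\hat I_0)=\hat I_m$; this is well-defined because $\hat\rho(\gamma)$ covers $\rho(\gamma)$, hence fixes the fiber $\pi^{-1}(I)$ setwise, and commutes with $r$. Since every $\hat\rho(s)$ commutes with $r$, so does $\hat\rho(g)$ for all $g$. Given any component $I'=\rho(g)(I)$, the elements of $F_{2n}$ taking $I$ to $I'$ form the coset $g\langle\gamma\rangle$, so the lifts of $I'$ lying in the $\hat\rho$-orbit of $\hat I_0$ are exactly $\{\hat\rho(g\gamma^t)(\hat I_0)\}=\{r^{tm}\hat\rho(g)(\hat I_0):t\in\Z\}$. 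As $t$ varies these exhaust all $k$ lifts of $I'$ if and only if $\gcd(m,k)=1$. Thus $\hat\rho$ acts transitively on the complementary components precisely when $\gcd(m,k)=1$.

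It remains to compute $m$, and this is the crux of the argument. Lifting to the universal cover group $\widetilde{\Homeo}_+(S^1)$ of homeomorphisms of $\R$ commuting with $T\colon x\mapsto x+1$, let $\tilde a_i,\tilde b_i$ be the fixed-point lifts of $\rho(a_i),\rho(b_i)$. The product of commutators $\tilde\gamma=\prod_{i=1}^n[\tilde a_i,\tilde b_i]$ is independent of these lift choices and satisfies $\tilde\gamma=T^e\,\widetilde{\rho(\gamma)}$, where $\widetilde{\rho(\gamma)}$ is the fixed-point lift of the hyperbolic element $\rho(\gamma)$ and $e\in\Z$ is the relative Euler number of $\rho$. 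Since the $k$-fold cover group is $\widetilde{\Homeo}_+(S^1)/\langle T^k\rangle$ with $T$ descending to $r$, and the fixed-point lift of $\rho(\gamma)$ preserves each $\hat I_j$ (its fixed points, lifts of the endpoints of $I$, occur in every $\hat I_j$), we get $\hat\rho(\gamma)=r^e\cdot(\text{fixed-point lift})$, so $m\equiv e\pmod k$. The representation $\rho$ is a Fuchsian uniformization of a genus $n$ surface with one boundary component, for which the relative Euler number attains its extremal (Milnor--Wood) value $|e|=|\chi|=2n-1$. Hence $\gcd(m,k)=\gcd(e,k)=\gcd(2n-1,k)$, and transitivity holds exactly when $\gcd(k,2n-1)=1$, as assumed. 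Establishing this identification of $m$ with the Euler number is the main obstacle; the counting above and the stabilizer analysis below are routine by comparison.

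Finally, when $\gcd(m,k)=1$ the stabilizer of $\hat I_0$ in $F_{2n}$ is $\langle\gamma^k\rangle\cong\Z$: indeed $g$ fixes $\hat I_0$ iff $g\in\langle\gamma\rangle$ and $\hat\rho(\gamma)^t(\hat I_0)=r^{tm}\hat I_0=\hat I_0$, i.e. $k\mid tm$, which forces $k\mid t$. The generator $\hat\rho(\gamma^k)$ restricted to $\hat I_0$ covers the fixed-point-free hyperbolic translation $\rho(\gamma^k)$ on $I$, so it acts on $\hat I_0$ without fixed points, that is, as the dynamical realization of the unique left order on $\Z$ with basepoint $\hat x_0$. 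With transitivity and this stabilizer condition verified, Corollary \ref{min set 2} applies and shows that $\hat\rho$ is the dynamical realization of a circular order with basepoint $\hat x_0$, completing the proof.
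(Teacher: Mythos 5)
Your proof is correct and follows essentially the same route as the paper: both verify the hypotheses of Corollary \ref{min set 2} by showing that $\hat{\rho}(\gamma)$ shifts the $k$ lifts of $I$ by $2n-1 \pmod k$, so that $\gcd(k,2n-1)=1$ gives both transitivity on the complementary intervals of $\hat{K}$ and the identification of the stabilizer of $\hat{I}_0$ as $\langle \gamma^k\rangle$ acting freely. The only difference is how that shift is justified: the paper cites the fact that the rotation number of $\hat{\rho}(\gamma)$ is $(2n-1)/k$ (quoting \cite{Mann invent}), whereas you derive the monodromy $m\equiv e \pmod k$ via fixed-point lifts in the universal cover and then invoke the Milnor--Wood extremality $|e|=2n-1$ of Fuchsian representations---an equivalent fact of comparable depth.
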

 
\noindent \textit{Proof.} 
 Again, we begin by describing the dynamics of $\hat{\rho}$.   All of the facts stated here follow easily from the construction of $\hat{\rho}$ as a lift of $\rho$.   Further explanation and a more detailed description of such lifts can be found in Section 2 of \cite{Mann invent}.  
 
Let $K$ be the exceptional minimal set for $\rho$.  Then $\hat{K} := \pi^{-1}(K)$ is the exceptional minimal set for $\hat{\rho}$, and $\hat{x}_0 \in S^1 \setminus \hat{K}$.  Let $I$ denote the connected component of $S^1 \setminus K$ containing $x_0$, and $\hat{I}$ the connected component of $S^1 \setminus \hat{K}$ containing $\hat{x}_0$.  
Using Corollary \ref{min set 2}, it suffices to show that $\hat{\rho}(G)$ acts transitively on the set of connected components of $S^1 \setminus \hat{K}$ and that the stabilizer of $\hat{I}$ acts on $\hat{I}$ as a dynamical realization of a linear order.  

Let $\hat{I} = I_0$, and let $I_1, I_2, ... , I_{k-1}$ denote the other connected components of $\pi^{-1}(I)$, in cyclic (counterclockwise) order.  

Let $g \in F_{2n}$ be the product of commutators $g= [a_1, b_1]...[a_n, b_n]$, so $\rho(g)$ is the stabilizer of $I$ in $\rho(G)$.   For the lifted action, we have
$\hat{\rho}(g)(I_i) = I_j$, where $j = i + 2n-1$ mod $k$.  (This follows from the fact that the \emph{rotation number} of $\hat{\rho}(g)$ is $(2n-1)/k$, see \cite{Mann invent}.)   Hence, the stabilizer of $\hat{I}$ is the infinite cyclic subgroup generated by $g^k$.   Moreover, $\hat{\rho}(g^k)$ acts on $\hat{I}$ without fixed points, as it is a lift of $\rho(g^k)$ which acts on $I$ without fixed points.   Thus, the action of the subgroup generated by $g^k$ acts on $\hat{I}$ as the dynamical realization of a left order on $\Z$. 

It remains only to show that $\hat{\rho}(G)$ acts transitively on the set of connected components of $S^1 \setminus \hat{K}$ so that we can apply Corollary \ref{min set 2}.  This is where we use the hypothesis that $k$ and $2n-1$ are relatively prime.   Assuming that they are relatively prime, the fact that $\hat{\rho}(g)(I_i) = I_j$, where $j = i + 2n-1$ mod $k$ will now imply that $\hat{\rho}$ acts transitively on the lifts of $I$.   In detail, if $\hat{J}$ is any other connected component of $S^1 \setminus \hat K$, with $\pi(\hat{J}) = J$, then we may find $f \in F_{2n}$ such that $\rho(f)(I) = J$.  It follows that $\hat{\rho}(g^m f)(\hat{I}) = \hat{J}$ for some $m$, so the action is transitive.  This completes the proof of the lemma. 
\qed

To finish the proof of the theorem, we need to show that different choices of $k$ give infinitely many distinct conjugacy classes of circular orders.   This follows from the fact we mentioned above that the rotation number of $\hat{\rho}(g)$ is $(2n-1)/k$.  Rotation number is a (semi)-conjugacy invariant of homeomorphisms of $S^1$.  Hence, the dynamical realizations of conjugate orders cannot assign different rotation numbers to the same element.  

 \end{proof}

\subsection{Explicit singleton neighborhoods of isolated orders} \label{neighborhood sec}

Recall that a neighborhood basis of an order $c \in \CO(G)$ is given by sets of the form $O_S(c): = \{ c' \in \CO(G) : c'(u,v,w) = c(u,v,w) \text{ for all } u, v, w \in S \}$, where $S$ ranges over all finite subsets of $G$.   Given an isolated circular order $c$, it is therefore very natural to ask: \emph{what is the minimum cardinality of $S$ such that $O_S(c)$ is a singleton?}  

In this section, we take the isolated circular order on $F_2 = \langle a, b \rangle$ whose dynamical realization has simple ping-pong dynamics (from Lemma \ref{one boundary lem}) and give an upper bound on the cardinality of such a set $S$, by exhibiting a specific set with 5 elements.  (We expect the bound $|S| \leq 5$ given here is sharp, but have not pursued this point.)
The proof of the Proposition below also gives an independent and very short proof that $\CO(F_2)$ has isolated points; the only previous tool that we use here is the ping-pong lemma.  

Fixing notation, let $c$ be the isolated circular from Lemma \ref{one boundary lem} in the case $n=1$.    We show the following: 

\begin{proposition} 
If $c'$ agrees with $c$ on the set $S^3=\{ \id, a, ba,  a^{-1}ba,  b^{-1}a^{-1}ba \}^3$, then $c' = c$.  
\end{proposition}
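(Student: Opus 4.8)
The plan is to show that the finite data $c|_{S^3}$, together with left-invariance, reconstructs the entire cyclic order of the orbit of the basepoint, so that any $c'$ agreeing with $c$ on $S^3$ must coincide with $c$. The engine is the ping-pong combinatorics of Lemma \ref{lem pingpong}.

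First I would fix notation from Lemma \ref{one boundary lem} in the case $n=1$: the dynamical realization $\rho$ of $c$ has simple ping-pong dynamics with attracting domains in the cyclic order $x_0, D(a), D(b), D(a^{-1}), D(b^{-1})$, and $\gamma := b^{-1}a^{-1}ba = [a,b]$ generates the stabilizer of the complementary interval $I$ of the exceptional minimal set that contains $x_0$. The five elements of $S=\{\id, a, ba, a^{-1}ba, \gamma\}$ are exactly $\id$ together with the four nontrivial suffixes of $\gamma$, of lengths $1,2,3,4$. A direct application of the ping-pong inclusions gives $\rho(a)(x_0)\in D(a)$, $\rho(ba)(x_0)\in D(b)$, $\rho(a^{-1}ba)(x_0)\in D(a^{-1})$ and $\rho(\gamma)(x_0)\in D(b^{-1})$, so the set $\{\rho(s)(x_0):s\in S\}$ has exactly one representative in each of the five regions (the four domains and the gap containing $x_0$), read off in positive cyclic order. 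This is precisely the content of $c$ on $S^3$.

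Next comes the key observation. Since $c$ and $c'$ are both left-invariant, agreement on $S^3$ propagates to every translate $g\cdot S$, because $c'(gp_i,gp_j,gp_k)=c'(p_i,p_j,p_k)=c(p_i,p_j,p_k)=c(gp_i,gp_j,gp_k)$. Letting $\rho'$ be a dynamical realization of $c'$ with basepoint $x_0$ (so $x_0$ has trivial stabilizer by Construction \ref{embedding def}), this says that for every $g$ the cyclic order of the five points $\{\rho'(gs)(x_0):s\in S\}$ — the vertex $g$ together with one representative from each of the four subtrees of the Cayley tree of $F_2$ hanging off $g$ — is the standard one, identical for $c$ and $c'$. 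In tree language, $c$ and $c'$ induce the same local ``rotation system'' at every vertex, and the spiralling suffix-of-$\gamma$ choice additionally records the two-level nesting of these subtrees. I would then run the word-length induction from the proof of Lemma \ref{lem pingpong}, but feeding in as combinatorial data the cyclic orders of the translates $g\cdot S$ in place of a priori domains: this determines the cyclic order of every triple $\rho'(w_1)(x_0),\rho'(w_2)(x_0),\rho'(w_3)(x_0)$ as the same function of the $S^3$-data as for $c$. As $c$ and $c'$ share that data and the orbit is free, this forces $c'=c$.

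The main obstacle is exactly this induction: one must show that the two-levels-deep information carried by $S$ forces the orbit of $\rho'$ to be \emph{non-crossing} (nested), and not merely to exhibit the correct local rotation at each vertex — a rotation system by itself does not pin down a global cyclic order on a tree's vertices unless the embedding is already known to be planar. The delicate bookkeeping is the dual role of $\gamma$: it is simultaneously the fifth region-representative and the generator of the linear part $\langle\gamma\rangle\cong\Z$ (Theorem \ref{isolated linear part}) acting on $I$, so that $x_0$ and $\rho(\gamma)(x_0)$ lie in the \emph{same} complementary interval. The crux is to verify that each containment $s(D(t))\subset D(s)$ needed to launch and close the ping-pong induction is detectable from the cyclic order of $S$ together with the single translate $s\cdot S$, which is precisely what the choice of successive suffixes of the boundary commutator guarantees. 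Once this is in place, the conclusion $c'=c$ is immediate, and as a byproduct the argument re-proves that $c$ is isolated (cf. Proposition \ref{ping pong isolated}) using only Lemma \ref{lem pingpong}.
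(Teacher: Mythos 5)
Your setup is right (the five elements are the suffixes of the commutator, each landing in a distinct attracting domain, and left-invariance propagates agreement on $S^3$ to every translate $g\cdot S$), and you correctly identify the crux: local cyclic data at each vertex of the Cayley tree does not by itself force the global nesting needed to run the ping-pong induction. But that crux is never proved — it is asserted in the sentence ``which is precisely what the choice of successive suffixes of the boundary commutator guarantees,'' and that assertion \emph{is} the proposition. Everything surrounding it (Lemma \ref{lem pingpong}, the propagation by left-invariance, the identification of $\gamma$ as generator of the linear part) is routine; the entire content of the statement lies in converting the finite cyclic-order data into the containments $s(D(t))\subset D(s)$, and your proposal stops exactly where that work begins.

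The paper closes this gap by a move your framework does not permit: it passes to a dynamical realization $\rho'$ of $c'$ and exploits that $\rho'(a)$, $\rho'(b)$ are homeomorphisms preserving the cyclic order of \emph{all} points of $S^1$, not just orbit points. Concretely, it chooses auxiliary non-orbit points $p_1, x_0', p_2, p_3$ (each selected inside an interval whose position is forced by the assumed cyclic order of the five orbit points together with order-preservation under the generators) and defines explicit intervals $D(a)=[p_1,a(p_3)]$, $D(b)=[b(x_0'),b(p_2)]$, $D(a^{-1})=[p_3,a^{-1}(p_1)]$, $D(b^{-1})=[p_2,x_0']$ satisfying the ping-pong inclusions for $\rho'$; Lemma \ref{lem pingpong} then determines the whole orbit order. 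Your purely combinatorial route — induction over translates $g\cdot S$ with only orbit-point data — has no access to such auxiliary points, which is why the non-crossing obstruction you flag is genuinely blocking there rather than a bookkeeping nuisance: a rotation system on the tree must still be shown to be realized planarly, and proving that from the $g\cdot S$ data alone is essentially as hard as the original statement. To repair the argument, replace the combinatorial induction by the construction of attracting domains for a realization of $c'$, as above.
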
 

\begin{proof}
Let $\rho: F_2 \to \Homeo_+(S^1)$ be the dynamical realization of $c$ with basepoint $x_0$.  We need to show that for any action $\rho'$ of $F_2$ on $S^1$ such that the cyclic order of $\rho'(S)(x_0)$ agrees with that of $\rho(S)(x_0)$, the cyclic order of $\rho'(G)(x_0)$ then agrees with that of $\rho(G)(x_0)$.    Consider an action of $F_2$ on $S^1$ with the points 
$$x_0, \,\, ab^{-1}a^{-1}b(x_0), \,\,  b(x_0),  \,\,  a^{-1}b(x_0), \,\,  b^{-1}a^{-1}b(x_0)$$ 
in this cyclic (counterclockwise) order.   
Here, and in the remainder of the proof, we drop the notation $\rho'$ for the action.  

We begin by choosing some additional points in $S^1$ in a careful way, so as to arrive at the configuration illustrated in Figure \ref{pingpongfig} below.  This will let us define attracting domains for the action.

Let $p_1 \in S^1$ be a point such that the triple $x_0, p_1, ab^{-1}a^{-1}b(x_0)$ is positively oriented.   Choose $x'_0$ very close to $x_0$, and such that $x'_0, x_0, p_1$ is positively oriented.  If $x'_0$ is chosen sufficiently close to $x_0$, then 
$$x'_0, \,\, p_1, \,\, ab^{-1}a^{-1}b(x'_0), \,\,  b(x'_0),  \,\,  a^{-1}b(x'_0), \,\,  b^{-1}a^{-1}b(x'_0)$$
will also be in positive cyclic order.    Let $X$ denote this set of 6 points.  

The ordering of $X$ implies that $a^{-1}(p_1)$ lies in the connected component of $S^1 \setminus X$ bounded by $a^{-1}b(x'_0)$ and $b^{-1}a^{-1}b(x'_0)$.  Let $p_2$ be a point in this component such that the triple $a^{-1}(p_1), p_2, b^{-1}a^{-1}b(x'_0)$ is positively oriented.  Similarly, our assumption on order implies that $b(p_2)$ lies in the component of $S^1 \setminus X$ bounded by $b(x'_0)$ and $a^{-1}b(x'_0)$, and we let $p_3$ be a point in this component such that $b(p_2), p_3, a^{-1}b(x'_0)$ is positively oriented.   This configuration is summarized in Figure \ref{pingpongfig}.

\begin{figure} 
\centering
\labellist
\small\hair 2pt
\pinlabel $b^{-1}a^{-1}b(x_0)$ at 160 -10
\pinlabel $p_2$ at 95 -7
\pinlabel $a^{-1}(p_1)$ at 55 5
\pinlabel $a^{-1}b(x'_0)$ at -25 110
\pinlabel $p_3$ at -8 144
\pinlabel $b(p_2)$ at -8 182
\pinlabel $b(x'_0)$ at 110 260
\pinlabel $a(p_3)$ at 175 250
\pinlabel $ab^{-1}a^{-1}b(x_0)$ at 290 175
\pinlabel $p_1$ at 260 130
\pinlabel $x'_0$ at 255 90
\pinlabel $\circ \ \ x_0$ at 256 105
\pinlabel $D(b)$ at 65 200
\pinlabel $D(b^{-1})$ at 180 48
\pinlabel $D(a^{-1})$ at 48 75
\pinlabel $D(a)$ at 185 180
\endlabellist
\includegraphics[width=2.5in]{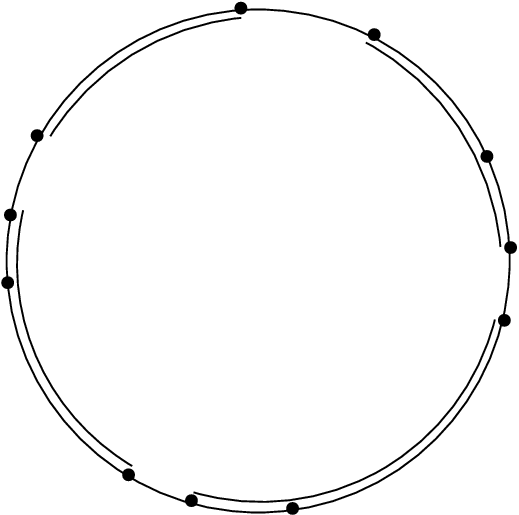}     

\caption{Configuration of points giving ping-pong domains on $S^1$} \label{pingpongfig}

\end{figure}

Now let $D(a)$, $D(b)$, $D(a^{-1})$ and $D(b^{-1})$ be the disjoint intervals $[p_1,a(p_3)]$,  $[b(x'_0), b(p_2)]$, $[p_3, a^{-1}(p_1)]$ and $[p_2, x'_0]$ as shown in  Figure \ref{pingpongfig}.    Our choice of configuration of points implies that $b(S^1 \setminus D(b^{-1})) \subset D(b)$.  Similarly,  $a( S^1 \setminus D(a^{-1})) \subset D(a)$.  We also have that $x_0$ is in the complement of the union of domains.  

Since these are the same cyclic order and containment relations as $c$, Lemma \ref{lem pingpong} implies that the cyclic order on $G$ induced by the orbit of $x_0$ coincides with $c$.  
\end{proof}

\begin{remark}
A similar strategy can be used to produce neighborhoods of the lifts of these orders on $F_2$, however, many more points are needed. 
Compare also the framework of \cite{Matsumoto new}, especially Lemma 4.8. 
\end{remark}

\section{Applications to linear orders}  \label{LO sec}

Suppose $0 \to A \to B \to C \to 0$ is a short exact sequence of groups.  
If $A$ is left-ordered, and $C$ circularly ordered, then it is well known that there is a natural circular order on $B$ such that both maps $A \to B$ and $B \to C$ are monotone.  (See \cite[Lemma 2.2.12]{Calegari} for a proof.)   Here, we discuss a different method of constructing orders on certain central extensions.

Recall that a subgroup $H \subset G$ is cofinal for a left order $<$ on $G$ if, for all $g \in G$, there exist $h_1, h_2 \in H$ such that $h_1 < g < h_2$.  
If $H \subset G$, we let $\LO_H(G) \subset \LO(G)$ denote the subspace of $\LO(G)$ consisting of orders where $H$ is cofinal.  

\begin{proposition}[see also \cite{Zheleva}] \label{cofinal prop}
Let $G$ be a group, and let 
$\Z \to \hat{G} \overset{\pi}\rightarrow G$ be a central extension.  Then, there is a continuous map $\pi^\ast=\pi^\ast_{\hat G}: \LO_\Z(\hat{G}) \to \CO(G)$.  Moreover, each circular order on $G$ is in the image of one such map $\pi^\ast$.  
\end{proposition}

This theorem is essentially proved in \cite{Zheleva}, although there is no comment on continuity there.

\begin{proof}
Let $\Z  \to \hat{G} \overset{\pi}\rightarrow G$ be a central extension.
Given $< $ in  $\LO_\Z(\hat{G})$ define $\pi^*(<)$ as follows.  Let $z$ be the generator of $\Z$ such that $z > \id$.  Since $\Z$ is cofinal, for each $g \in G$, there exists a unique $\hat{g} \in \hat{G}$ such that $\id \leq \hat{g} < z$.  Given distinct elements $g_1, g_2, g_3 \in G$, let $\sigma$ be the permutation such that $\id \leq \hat{g}_{\sigma(1)} < \hat{g}_{\sigma(2)} < \hat{g}_{\sigma(3)} < z$.   Define $\pi^*(<)(g_1, g_2, g_3) := \mathrm{sign}(\sigma)$.  One checks that this is a well defined circular order on $G$.  

To show continuity, given a finite set $S \subset G$, let $\hat{S}:= \{ \hat{g} \in \hat{G} : g \in S\}$.  If $<_1$ and $<_2$ are two left orders that agree on $\hat{S} \cup \{ \id, z\}$, then the definition of $\pi^*$ ensures that $\pi^*(<_1)$ and $\pi^*(<_2)$ agree as circular orders on $S$.

For the last remark, we give a proof for countable groups that highlights the relationship between the dynamical realization of $c$ and $\pi^*(c)$.  The general case is given in \cite{Zheleva}, and uses essentially the same idea.  Let $\Homeo_\Z(\R)$ denote the group of orientation-preserving homeomorphisms of $\R$ that commute with integer translations.  This group is the universal central extension of $\Homeo_+(S^1)$.  
Given $c \in \CO(G)$, let $\rho$ be a dynamical realization of $c$ with basepoint $x$, and let $\hat{x} \in \R$ be a lift of $x$.  Let $\hat{G}$ be the pullback of the central extension $0 \to \Z \to \Homeo_\Z(\R) \to \Homeo_+(S^1) \to 1$ using $\rho$.  It is easily checked that $\hat{x}$ has a free orbit under $\hat{G} \subset \Homeo(\R)$ so induces a left order $<$ on $\hat{G}$, and in this left order $\Z$ is cofinal.   By construction, $\pi^*(<) = c$. 
\end{proof} 

We note that, as remarked to the authors by S. Matsumoto, $\pi^\ast$ is not necessarily one-to-one, although the last step in the proof does give a partial inverse.  

\begin{lemma} \label{open lem}
Let $G$ be a finitely generated group, and $z \in G$ a central element.  The set of left invariant orders on $G$ where $\langle z\rangle $ is cofinal is open in $\LO(G)$.  
\end{lemma}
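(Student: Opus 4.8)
The plan is to reduce cofinality of the central element $z$ to a \emph{uniform, finite} condition on a generating set, which will then be visibly open. Fix a finite symmetric generating set $S$ of $G$ and write $U \subset \LO(G)$ for the set of orders in which $z$ is cofinal. (If $z = \id$ then $U = \emptyset$, which is open, so assume $z \neq \id$.) Given an arbitrary order $< \ \in U$, we have either $z > \id$ or $z < \id$; since cofinality of $z$ is equivalent to cofinality of $z^{-1}$ and $z^{-1}$ is also central, I would replace $z$ by $z^{-1}$ if necessary and assume $z > \id$, running the identical argument in the other case.

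The key algebraic input is a consequence of centrality. For an order with $z > \id$, define $C_z = \{ g \in G : z^{-n} < g < z^n \text{ for some } n \in \N \}$. I claim that $C_z$ is a subgroup and that $z$ is cofinal if and only if $C_z = G$. The subgroup property is exactly where centrality is essential: left orders are only left-invariant, so from $g < z^n$ and $h < z^m$ one cannot simply right-multiply. Instead one left-multiplies $h < z^m$ by $g$ to get $gh < gz^m = z^m g$, and left-multiplies $g < z^n$ by $z^m$ to get $z^m g < z^{n+m}$, whence $gh < z^{n+m}$; the lower bound and closure under inverses follow symmetrically. Thus the ability to commute powers of $z$ past arbitrary group elements is precisely what makes $C_z$ closed under multiplication, and the equivalence ``$z$ cofinal $\iff C_z = G$'' is then immediate from the definition.

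Because $C_z$ is a subgroup, $C_z = G$ if and only if $S \subset C_z$. Hence for the fixed order $< \ \in U$ I may choose a single $N \in \N$ large enough that $z^{-N} < s < z^N$ for every $s \in S$ (using $z > \id$ to absorb the finitely many individual bounds into one). I then set
$$V = \{\, \prec \ \in \LO(G) : \id \prec z \text{ and } z^{-N} \prec s \prec z^N \text{ for all } s \in S \,\}.$$
Each defining inequality compares two fixed elements of $G$ and so is determined by membership of a fixed element in the positive cone; as $S$ is finite, $V$ is an intersection of finitely many such clopen conditions, hence an open neighborhood of $<$ in $\LO(G)$.

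Finally I would check $V \subset U$: for any $\prec \ \in V$ we have $z \succ \id$ and $s \in C_z$ for every generator $s$ (with $C_z$ formed relative to $\prec$), so this subgroup contains $S$ and therefore equals $G$, i.e.\ $z$ is $\prec$-cofinal. Since $<$ was an arbitrary point of $U$, this shows $U$ is open. The entire content lies in the subgroup claim of the second paragraph, and the main obstacle — the failure of right-invariance for left orders — is resolved precisely by the hypothesis that $z$ is central.
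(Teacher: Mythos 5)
Your proof is correct and takes essentially the same approach as the paper: the paper likewise records finitely many sandwich inequalities $z^{k_i} \leq g_i < z^{k_i+1}$ on a finite generating set and uses centrality to propagate them inductively to every word in the generators, which is precisely the content of your subgroup claim for $C_z$. One small caveat: the equivalence between cofinality of $z$ and cofinality of $z^{-1}$, which you invoke to reduce to the case $z > \id$, is true for the standard two-sided notion of cofinality but false for the one-sided definition the paper literally states (in $(\Z, <)$ the element $1$ is cofinal but $-1$ is not). This is harmless here, since under that one-sided definition any cofinal element automatically satisfies $z > \id$, so your case $z < \id$ never arises and the rest of your argument goes through verbatim.
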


\begin{proof} 
Suppose that $<$ is a left order where $\langle z\rangle $ is cofinal.  Then for each generator $g_i$ in a finite generating set, there exists $k_i \in \Z$ such that $z^{k_i} \leq g_i < z^{k_i + 1}$.   We claim that this finite collection of inequalities also implies that $\langle z \rangle$ is cofinal.  Indeed, that $z$ is central implies that 
$z^{k_i + k_j} \leq g_i g_j < z^{k_i + k_j + 2}$
for all $i, j$, and inductively, that each word in the generators is bounded above and below by powers of $z$.  
\end{proof}

\begin{remark} 
This can fail when $G$ is not finitely generated.  Indeed, let $G=\Z[x]$ under addition, and let $z=1\in \Z[x]$. We can order $\Z[x]$ by letting $0 <_\pi p(x)$ if and only if $ p(\pi)$ is a non-negative real number. On the other hand any finite set $S$ of $G$ lies inside a subgroup $H$ that admits a complement, say $H\oplus H'=G$. On $H$ we can put again $<_\pi$, and extend this ordering lexicographically  to $G$ using any ordering on $H'\simeq G$, so that $H$ is a proper convex subgroup. The resulting ordering $<'$ on $G$ agrees with $<_\pi$ on $S$, so by choosing $S$ large it can be made arbitrarily close to $<_\pi$. However, if on $H'$ we put the lexicographic ordering coming from the natural identification of $\Z[x]$ with a direct sum of infinitely many copies of $\Z$, then every element of $G$ fixes a point in the dynamical realization of $<'$.
\end{remark}

Using Lemma \ref{open lem} and Proposition \ref{cofinal prop}, we can produce isolated left orders from isolated circular orders on finitely generated groups.

\begin{proposition}   \label{LO prop}
Assume that $G$ is finitely generated and $c$ is an isolated circular order on $G$.  If $\Z \to \hat{G} \overset{\pi}\to G$ is a central extension and $< \in \LO_\Z(G)$ a left order such that $\pi^\ast(<) = c$, then $<$ is isolated in $\LO(\hat{G})$.  
\end{proposition}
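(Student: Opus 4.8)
The plan is to transfer the isolation of $c$ across the bijection of Proposition \ref{cofinal prop}, using Lemma \ref{open lem} to control cofinality in a neighborhood. Since $G$ is finitely generated and $\Z$ is finitely generated, $\hat{G}$ is finitely generated; let $z \in \hat{G}$ generate the central $\Z$. By Lemma \ref{open lem}, the set $W \subset \LO(\hat{G})$ of left orders in which $z$ is cofinal is open, so there is a finite set $F_0 \subset \hat{G}$ for which the basic neighborhood $U_0 = \{\, \hat{<}' : \hat{<}' \text{ agrees with } \hat{<} \text{ on } F_0 \,\}$ is contained in $W$. On $W$, Proposition \ref{cofinal prop} gives a well-defined assignment $\hat{<}' \mapsto c_{\hat{<}'}$, the circular order on $G = \hat{G}/\Z$ induced by $\hat{<}'$; this assignment is injective, with $c_{\hat{<}} = c$. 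Thus it suffices to produce a single finite set $F \subset \hat{G}$ such that every $\hat{<}' \in W$ agreeing with $\hat{<}$ on $F$ satisfies $c_{\hat{<}'} = c$: injectivity then forces $\hat{<}' = \hat{<}$, exhibiting $\hat{<}$ as an isolated point.

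To build $F$, first use that $c$ is isolated to fix a finite set $S \subset G$ such that $c$ is the only circular order agreeing with $c$ on $S$. The heart of the argument — and the step I expect to be the main obstacle — is a finite-determination lemma: the restriction of $c_{\hat{<}'}$ to triples from $S$ is computed from the restriction of $\hat{<}'$ to a finite subset $F_1 \subset \hat{G}$ depending only on $\hat{<}$ and $S$, not on $\hat{<}'$. To extract $F_1$, I would work in the dynamical realization $\hat{\rho}'$ of $\hat{<}'$, normalized so that $\hat{\rho}'(z)$ is translation by $1$ (possible since $z$ is central and cofinal, hence acts freely), with basepoint $\tilde{x}_0$; then $c_{\hat{<}'}$ is read off from the cyclic order on $\R/\Z$ of the projected orbit. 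For a triple $(g_1,g_2,g_3)$ of distinct elements of $S$, fix lifts $\hat{g}_1,\hat{g}_2,\hat{g}_3 \in \hat{G}$. There are unique integers $m_2,m_3$ — determined by the order $\hat{<}$ — such that $\hat{g}_2 z^{m_2}$ and $\hat{g}_3 z^{m_3}$ lie in the ``fundamental interval'' $(\hat{g}_1,\hat{g}_1 z)$, i.e. $\hat{g}_1 \,\hat{<}\, \hat{g}_i z^{m_i} \,\hat{<}\, \hat{g}_1 z$, and the value $c_{\hat{<}}(g_1,g_2,g_3)$ is precisely the $\hat{<}$-order of $\hat{g}_2 z^{m_2}$ and $\hat{g}_3 z^{m_3}$ inside this interval. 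Collecting, over all triples from $S$, the finitely many elements $\hat{g}_1,\ \hat{g}_1 z,\ \hat{g}_2 z^{m_2},\ \hat{g}_3 z^{m_3}$ into a finite set $F_1$ then guarantees that any $\hat{<}' \in W$ agreeing with $\hat{<}$ on $F_1$ places the same translates in the same fundamental intervals in the same order, so that $c_{\hat{<}'}$ agrees with $c$ on $S$.

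The subtlety to handle carefully is that the integers $m_2,m_3$ realizing the normalization are a priori unbounded across $W$, so one cannot simply invoke ``uniform continuity'': the argument succeeds precisely because we first fix $\hat{<}$, read off the specific finite data $(m_2,m_3)$ for $\hat{<}$, and only compare with orders $\hat{<}'$ that already agree with $\hat{<}$ on the relevant finitely many translates — for such $\hat{<}'$, the same integers $m_2,m_3$ remain the correct ones, since $\hat{g}_1 \,\hat{<}'\, \hat{g}_i z^{m_i} \,\hat{<}'\, \hat{g}_1 z$ still holds and $\hat{\rho}'(z)$ translates by exactly one period. One must also confirm that the two comparisons against $\hat{g}_1$ and $\hat{g}_1 z$ genuinely pin down membership in the fundamental interval, and dispatch degenerate triples (where the $g_i$ are not distinct) separately, trivially, since these yield value $0$ for every order.

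Finally, I would set $F = F_0 \cup F_1$ and conclude: the neighborhood $U = \{\hat{<}' : \hat{<}' \text{ agrees with } \hat{<} \text{ on } F\}$ lies in $W$ (because $F_0 \subset F$), and every $\hat{<}' \in U$ satisfies $c_{\hat{<}'} = c$ on $S$ (because $F_1 \subset F$), hence $c_{\hat{<}'} = c$ by isolation of $c$, hence $\hat{<}' = \hat{<}$ by injectivity of the correspondence on $W$. Thus $U = \{\hat{<}\}$, so $\hat{<}$ is isolated in $\LO(\hat{G})$.
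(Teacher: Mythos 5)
Your proposal is correct and follows essentially the same route as the paper: use Lemma \ref{open lem} to restrict to a neighborhood where $z$ is cofinal, use isolation of $c$ to fix a finite set $S \subset G$, normalize lifts of the elements of $S$ so that agreement of $\hat{<}'$ with $\hat{<}$ on finitely many elements forces the induced circular order to agree with $c$ on $S$ (hence equal $c$), and then invoke injectivity of the correspondence in Proposition \ref{cofinal prop}. The only cosmetic difference is that the paper normalizes all lifts at once into the single fundamental domain $[\id, z)$, so its finite set is just $\{\hat{g}_1, \ldots, \hat{g}_n, z\}$, whereas you perform a per-triple normalization into intervals $(\hat{g}_1, \hat{g}_1 z)$; the underlying mechanism is identical.
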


\begin{proof} 
By Proposition \ref{cofinal prop}, $\pi^\ast: \LO(\hat{G}) \to \CO(G)$ is continuous.  We claim that, since $G$ (and hence $\hat{G}$) is finitely generated, $\pi^\ast$ is also locally injective.  To see this, suppose $<$ and $<' \in \LO(\hat{G})$ have the same image under $\CO(G)$.  Let $\rho$ and $\rho'$ be the dynamical realizations of $<$ and $<'$ respectively; up to conjugacy, we can assume that $\rho(z) = \rho'(z) = x\mapsto x+1$, where $z$ is the generator of the $\Z$ subgroup of $\hat{G}$. We can also assume that the induced actions of $G$ on $\R/\Z$ agree, since this action is the dynamical realization of $\pi^\ast(<) = \pi^\ast(<')$.  Fix a finite generating set $S = \{s_1, ... s_n\}$ for $G$, and choose generators $\{\hat{s}_1, ... \hat{s}_n, z\}$ for $\hat{G}$ where $\hat{s}_i$ is the (unique) lift of $s_i$ to $\hat{G}$ such that $\id < \hat{s}_i < z$.   Since $\pi^\ast(<) = \pi^\ast(<')$, for each $s \in S$ we have $\rho(\hat{s}_i) = \rho'(s_i) z^{n_i}$ for some $n_i \in \Z$.  In particular, if $<$ and $<'$ are close enough so that they agree on the elements $\hat{s}_i$ and $z$, we will necessarily have $\rho(\hat{s}_i) = \rho'(\hat s_i)$, thus the dynamical realizations, and hence the orders $<$ and $<'$ agree.  

Now let $<$ be an order with $\pi^\ast(<)$ isolated, as in the statement of the proposition.  Continuity and local injectivity of $\pi^\ast$ combined with the fact that $\pi^{\ast}(<)$ is isolated implies that $<$ must have also been isolated in $\LO_\Z(\hat{G})$.  Lemma \ref{open lem} says that $\LO_\Z(\hat{G})$ is an open neighborhood of $<$ in $\LO(\hat{G})$, so $<$ is isolated in $\LO(\hat{G})$.  
\end{proof}

Proposition \ref{cofinal prop}, \ref{LO prop}, and Theorem \ref{f_2_iso_thm} together imply the following.  

\begin{corollary}
The space of left orders of $F_2 \times \Z $  has infinitely many nonconjugate isolated points. 
\end{corollary}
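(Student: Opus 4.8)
The plan is to transport the infinitely many nonconjugate isolated circular orders on $F_2$ provided by Theorem \ref{f_2_iso_thm} (with $n=1$) to left orders on $F_2 \times \Z$ via the correspondence of Proposition \ref{cofinal prop}, using Proposition \ref{LO prop} to guarantee isolation and an equivariance argument to guarantee that nonconjugate circular orders yield nonconjugate left orders.

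First I would observe that, since $F_2$ is free, $H^2(F_2; \Z) = 0$, so every central extension of $F_2$ by $\Z$ is the trivial one; in particular the extension $\hat{G}$ produced in the proof of Proposition \ref{cofinal prop} is isomorphic to $F_2 \times \Z$. Thus Proposition \ref{cofinal prop} specializes to a bijection between $\CO(F_2)$ and the set of left orders on $F_2 \times \Z$ in which the central $\Z$ is cofinal. Let $c_1, c_2, \ldots$ be representatives of infinitely many distinct conjugacy classes of isolated circular orders on $F_2$, as furnished by Theorem \ref{f_2_iso_thm}, and let $\hat{<}_1, \hat{<}_2, \ldots$ be the corresponding left orders on $F_2 \times \Z$. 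Since $F_2$ is finitely generated and each $c_i$ is isolated, Proposition \ref{LO prop} shows that each $\hat{<}_i$ is isolated in $\LO(F_2 \times \Z)$.

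It remains to check that the $\hat{<}_i$ lie in distinct conjugacy classes, and this is the crux. I would argue that the bijection of Proposition \ref{cofinal prop} intertwines the conjugation action of $F_2$ on $\CO(F_2)$ with the conjugation action of $F_2 \times \Z$ on $\LO(F_2 \times \Z)$. Concretely, conjugation by $(g, k) \in F_2 \times \Z$ depends only on $g$, since $\Z$ is central. On the circular side, the conjugate order $c_g$ has the same dynamical realization $\rho$ but with basepoint moved from $x_0$ to $\rho(g)(x_0)$, as recorded at the end of Section \ref{sec basic facts}. Pulling back the universal central extension as in Proposition \ref{cofinal prop}, the left order assigned to $c_g$ is the one obtained from the realization $\hat{\rho}$ with basepoint a lift of $\rho(g)(x_0)$, namely $\hat{\rho}(\hat{g})(\tilde{x}_0)$ for a lift $\hat{g}$ of $g$. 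By the left-order analogue of the change-of-basepoint formula, this is exactly the $\hat{g}$-conjugate of $\hat{<}$, so $c_g \mapsto \hat{<}_{\hat{g}}$, establishing the desired equivariance.

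Given equivariance, suppose for contradiction that $\hat{<}_i$ and $\hat{<}_j$ were conjugate, say $\hat{<}_j = (\hat{<}_i)_{(g,k)}$. Since conjugation ignores the central factor, $\hat{<}_j = (\hat{<}_i)_{(g,0)}$, which corresponds under the bijection to $(c_i)_g$; injectivity of the correspondence then forces $(c_i)_g = c_j$, contradicting that $c_i$ and $c_j$ represent distinct conjugacy classes. Hence the $\hat{<}_i$ are pairwise nonconjugate, giving infinitely many nonconjugate isolated left orders on $F_2 \times \Z$. The main obstacle is verifying the equivariance cleanly; in particular, checking that the lift $\hat{g}$ of $g$ acts on $\LO(F_2 \times \Z)$ by conjugation in the way that matches the basepoint change, which rests on the left-order version of Proposition \ref{basepoint prop} and on the fact that the central $\Z$ is invisible to conjugation (so that cofinality of $\Z$, and the whole conjugacy analysis, descends faithfully from the $F_2$-action on $\CO(F_2)$).
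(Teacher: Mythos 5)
Your proof is correct, and its first half follows the paper exactly: isolation of the pulled-back orders comes from Theorem \ref{f_2_iso_thm}, the triviality of central extensions of $F_2$ by $\Z$, and Propositions \ref{cofinal prop} and \ref{LO prop}. Where you genuinely diverge is the nonconjugacy argument. The paper does \emph{not} transport nonconjugacy through the correspondence; instead it re-runs on the line the same dynamical-invariant computation it used for the circular orders: the dynamical realizations of the pulled-back orders land in $\Homeo_\Z(\R)$, where elements have a conjugation-invariant translation number, and the translation number of $[a,b]$ in the $k$-fold lift is $1/k$, so varying $k$ gives nonconjugate orders (the paper also records a ``naive'' certificate: in $<_k$ the central generator $z$ sits between $[a,b]^{k-1}$ and $[a,b]^{k}$, and a conjugate order can shift this position by at most one power). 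You instead prove that the bijection of Proposition \ref{cofinal prop} is equivariant for the conjugation actions --- conjugation by $(g,k)$ on $\LO(F_2\times\Z)$ matches conjugation by $g$ on $\CO(F_2)$, via the change-of-basepoint description of conjugate orders and the fact that conjugation by central elements acts trivially --- and then quote the nonconjugacy clause of Theorem \ref{f_2_iso_thm}. Your equivariance computation (the basepoint $\rho(g)(x_0)$ lifts to $\hat\rho(\hat g)(\tilde x_0)$, which produces exactly the $\hat g$-conjugate order) is sound, and this route buys generality: it shows that for any finitely generated $G$ with vanishing $H^2(G;\Z)$, nonconjugate isolated circular orders automatically pull back to nonconjugate isolated left orders on $G\times\Z$, with no invariant computations needed. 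What the paper's route buys is concreteness and independence from bookkeeping: an explicitly computable conjugacy invariant (translation number), plus an elementary order-theoretic check, whereas your argument must be set up carefully so that the identification $\hat G\cong F_2\times\Z$ is an equivalence of extensions (identity on the central $\Z$ and on the $F_2$ quotient) in order for the correspondence, its injectivity, and the equivariance to refer to orders on one fixed group.
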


\begin{proof}
The existence of infinitely many isolated points is an immediate consequence of \ref{cofinal prop}, \ref{LO prop} and Theorem \ref{f_2_iso_thm}, together with the fact that all central extensions of $F_2$ by $\Z$ are trivial, i.e. direct products.   The argument that the examples obtained by pulling back the nonconjugate ``standard $k$-lift" orders on $F_2$ to $F_2 \times \Z$ are nonconjugate can be done similarly to the argument that the original orders on $F_2$ were non-conjugate.  Since these dynamical realizations have image in $\Homeo_\Z(\R)$, elements have a well defined, conjugation-invariant, \emph{translation number}, and the translation number of the commutator $[a,b]$ in the $k$-fold lift is $1/k$ (again, this is standard and more background can be found in \cite{Mann invent}).  Thus, varying $k$ gives non-conjugate dynamical realizations, and hence nonconjugate orders.  
\end{proof}

This example is interesting for three reasons. First, $F_2\times \Z$ is isomorphic to $P_3$, the pure braid group on three strands.  It is known that  the braid group $B_n$ admits isolated orderings. Could they always come from isolated ordering on $P_n$?  Related work in an extensive study of orderings on braid groups can be found in \cite{braids}. Secondly, $F_2\times\Z$ is an example of a right-angled Artin group.  Perhaps it is possible to give a complete characterization of which RAAGs have isolated left orders.  Finally, this example also shows that direct products behave quite differently than free products --  in \cite{Rivas 12}, it is shown that the free product of any two left orderable groups has no isolated left orders.

\section{Further questions}  \label{questions sec}

Since we have found infinitely many isolated points in the compact space $\CO(F_{2n})$, they must accumulate somewhere.  It is not hard to show the following, we leave the proof as an exercise. 

\begin{proposition}[Accumulation points of lifts in $\CO(F)$]
Let $A \subset \CO(F_2)$ denote the set of all finite lifts of the Fuchsian circular order.  Then the accumulation points of $A$ are left orders on $F_2$.   These left orders have dynamical realizations conjugate into the universal covering group of $\PSL(2,\R)$, which acts by homeomorphisms on the line commuting with integer translations.  
\end{proposition}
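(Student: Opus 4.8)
The plan is to recognize each element of $A$ as a reduction modulo $k$ of a single action on the line inside $\widetilde{\PSL(2,\R)}$, and to show that the circular orders $c_k$ converge to the left order this line action induces.

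\textbf{Setup.} Let $\rho : F_2 \to \PSL(2,\R)$ be the Fuchsian representation of Lemma \ref{one boundary lem} (genus $1$, one boundary component), with basepoint $x_0$, and for each $k \geq 2$ let $c_k \in A$ be the standard $k$-lift order with dynamical realization $\hat\rho_k$. I would first lift $\rho$ to $\tilde\rho : F_2 \to \widetilde{\PSL(2,\R)} \subset \Homeo_\Z(\R)$ by sending each free generator $s \in \{a,b\}$ to the unique lift $\tilde\rho(s)$ with a fixed point (equivalently, with translation number $0$); since $F_2$ is free this defines a homomorphism. By construction $\hat\rho_k$ is exactly the reduction of $\tilde\rho$ modulo $k\Z$, acting on $S^1 = \R/k\Z$, and $\tilde\rho([a,b])$ has translation number $1$ (consistent with $\rott(\hat\rho_k([a,b])) = 1/k$ from the proof of Theorem \ref{f_2_iso_thm}).

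\textbf{Convergence to a left order.} Fix a lift $\tilde x_0 \in \R$ of $x_0$; it has trivial stabilizer under $\tilde\rho$, so the orbit $\tilde\rho(F_2)(\tilde x_0) \subset \R$ is linearly ordered and induces a left order $<$ on $F_2$, with associated degenerate circular order $c_< \in \LO(F_2) \subset \CO(F_2)$. I claim $c_k \to c_<$. Indeed, fix a triple $(u,v,w)$ and choose $M$ with $\tilde\rho(u)(\tilde x_0), \tilde\rho(v)(\tilde x_0), \tilde\rho(w)(\tilde x_0) \in [-M,M]$. For $k > 2M$ the projection $[-M,M] \to \R/k\Z$ is an orientation-preserving embedding onto an arc of length less than $k$, so the cyclic order of the three projected points agrees with their order within $[-M,M]$; that is, $c_k(u,v,w) = c_<(u,v,w)$. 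As the triple was arbitrary, $c_k \to c_<$ in $\CO(F_2)$. Since every element of $A$ is isolated (Proposition \ref{ping pong isolated}) and the rotation number of $[a,b]$ is continuous on $\CO(F_2)$ and equals $1/k$ on $c_k$, any accumulation point of $A$ is a limit of some $c_{k_i}$ with $k_i \to \infty$ (up to the conjugation relating different choices of lift and basepoint), hence equals $c_<$ or a conjugate of it. In all cases it lies in $\LO(F_2)$, proving the first assertion.

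\textbf{Identifying the dynamical realization.} It remains to see that the dynamical realization $\rho_<$ of $c_<$ is conjugate into $\widetilde{\PSL(2,\R)}$. The action $\tilde\rho$ induces $<$ and lies in $\widetilde{\PSL(2,\R)}$, so by Lemma \ref{navas lemma} the dynamical realization is a monotone collapse of $\tilde\rho$. The main technical point is that $\tilde\rho$ is \emph{not} itself the dynamical realization: letting $K \subset S^1$ be the limit set of $\rho(F_2)$, $\pi : \R \to S^1$ the universal covering intertwining $\tilde\rho$ and $\rho$, and $\tilde K = \pi^{-1}(K)$, each complementary component of $\tilde K$ contains exactly one point of $\tilde\rho(F_2)(\tilde x_0)$. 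This is because the infinitely many orbit points filling the single gap $I$ containing $x_0$ downstairs are distributed one per gap among the integer translates of its lift, since $\tilde\rho([a,b])$ has translation number $1$; hence Lemma \ref{min set condition} forbids $\tilde\rho$ from being a dynamical realization. Collapsing these gaps is the universal-cover version of the construction in the Remark following Lemma \ref{more boundary lem}: it yields an action conjugate to the lift of the representation obtained from $\rho$ by replacing its single funnel with a cusp, i.e. a representation $\tilde\rho' : F_2 \to \widetilde{\PSL(2,\R)}$ with $\tilde\rho'([a,b])$ parabolic of translation number $1$, acting on $\R$ commuting with integer translation. This $\tilde\rho'$ is the dynamical realization of $c_<$, as claimed. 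I expect this last identification of the collapsed action with a cusped $\widetilde{\PSL(2,\R)}$ representation to be the only nonroutine step; everything else follows directly from the tools of Sections \ref{spaces sec}--\ref{F_n section}.
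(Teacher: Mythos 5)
The paper gives no proof of this proposition to compare against — it is explicitly left as an exercise — so your argument can only be judged on its own terms. Its first half is correct and complete: identifying each $\hat\rho_k$ as the reduction of the single lift $\tilde\rho \subset \Homeo_\Z(\R)$ modulo $k\Z$, and noting that any finite set of orbit points lies in $[-M,M]$, which embeds order-preservingly into $\R/k\Z$ once $k>2M$, does give $c_k \to c_<$; since each $c_k$ is isolated, every accumulation point of $A$ is a limit of $c_{k_i}$ with $k_i\to\infty$ and hence is $c_<$ (or a conjugate), a left order. Your observation that $\tilde\rho$ is \emph{not} itself the dynamical realization of $c_<$ is also correct, and is the point most easily gotten wrong: each gap of $\tilde K$ contains exactly one orbit point (your translation-number argument for the lifts of $I$ extends to the lifts of every gap $\rho(g)(I)$ by equivariance), and the collapsing argument of Lemma \ref{min set condition} — which you must re-derive on the line, since that lemma as stated concerns $S^1$ and exceptional minimal sets — then rules out $\tilde\rho$.

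Two steps in your last paragraph are genuinely incomplete. First, knowing from Lemma \ref{navas lemma} that the dynamical realization is \emph{some} monotone collapse of $\tilde\rho$, and that the trivial collapse fails, does not yet identify it with the \emph{full} collapse of the gap closures of $\tilde K$; a priori an intermediate collapse could occur. The missing (short) argument is that the full collapse still induces $<$ and has dense basepoint orbit, and any action with dense orbit inducing $<$ is the dynamical realization, because the monotone surjection supplied by Lemma \ref{navas lemma} is injective on a dense orbit, hence a homeomorphism. Second, identifying the collapsed action with the lift of a cusped Fuchsian representation is the real geometric content of the second assertion, and you lean on the Remark following Lemma \ref{more boundary lem}, which the paper itself states without proof; moreover that Remark collapses only gaps \emph{missing} the orbit, whereas here every gap contains one orbit point and is collapsed together with it, so it does not apply verbatim. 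What is actually needed is that blowing down all gap closures of the limit set of the funneled one-holed-torus group yields an action topologically conjugate to a Fuchsian (cusped punctured-torus) action, whose fixed-point lift then lands in $\widetilde{\PSL(2,\R)} \subset \Homeo_\Z(\R)$. This is true, and your overall architecture is surely the intended one, but as you acknowledge, this step is the crux and remains unproved in your write-up.
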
 

Since $\LO(F_2)$  has no isolated points, the accumulation points of $A$ belong to a Cantor set embedded in $\CO(G)$.  Remarkably, we do not know a single example of any group $G$ such that $\CO(G)$ -- or even $\LO(G)$ -- has accumulation points that do not belong to a Cantor set.  

\begin{question} 
Does there exist a countable group $G$ such that the derived set of $\CO(G)$ is neither empty nor a Cantor set?  
\end{question}

Another question that remains open is the following:

\begin{question}  \label{odd q}
Does there exist an isolated circular order on $F_n$, for $n$ odd?  
\end{question}

Theorem \ref{domains thm} reduces this to an essentially combinatorial problem of checking whether an arrangement of attracting domains gives an action which permutes transitively the complementary intervals to the minimal set.   At this point we do not know a single example of an isolated circular order, and suspect the answer to Quesiton \ref{odd q} may well be negative.


\vspace{.3in}

\textit{Kathryn Mann}

Dept. of Mathematics, University of California, Berkeley  

kpmann@math.berkeley.edu

\bigskip 

\textit{Crist\'obal Rivas}

Dpto. de Matem\'aticas y C.C., Universidad de Santiago de Chile

cristobal.rivas@usach.cl

\end{document}